\newcommand{\numberset}{\mathbb} 
\newcommand{\N}{\numberset{N}} 
\newcommand{\Z}{\numberset{Z}} 
\newcommand{\Q}{\numberset{Q}}
\newcommand{\R}{\numberset{R}}
\newcommand{\bA}{\mathbb A}
\newcommand{\bC}{\numberset{C}}
\newcommand{\bH}{\mathbb{H}}
\newcommand{\bT}{\mathbb T}
\newcommand{\cA}{\mathcal A}
\newcommand{\cD}{\mathcal D}
\newcommand{\cJ}{\mathcal J}
\newcommand{\cL}{\mathcal L}
\newcommand{\cO}{\mathcal O}
\newcommand{\cS}{\mathcal S}
\newcommand{\cT}{\mathcal T}
\newcommand{\cX}{\mathcal X}
\newcommand{\fra}{\mathfrak a}
\newcommand{\frp}{\mathfrak p}
\DeclareMathOperator{\KK}{KK}
\DeclareMathOperator{\Tot}{Tot}
\DeclareMathOperator{\Tor}{Tor}
\DeclareMathOperator{\dom}{dom}
\DeclareMathOperator{\Tr}{Tr}
\newcommand{\medwedge}{\mathop{\textstyle\bigwedge}\nolimits}
\newcommand{\sfZ}{\underline{\mathbb Z}}
\newcommand{\sfo}{\underline{o}}
\newcommand{\absv}[1]{\left|#1\right|}
\newcommand{\Gal}{\mathrm{Gal}}
\newcommand{\ab}{\mathrm{ab}}
\newcommand{\id}{\mathrm{id}}
\newcommand{\SL}{\mathrm{SL}}
\renewcommand{\PrintDOI}[1]{%
  \href{http://dx.doi.org/#1}{{\tt DOI:#1}}%
}
\renewcommand{\eprint}[1]{#1}
\theoremstyle{plain}
\newtheorem{theorem}{Theorem}[section]
\newtheorem{proposition}[theorem]{Proposition}
\newtheorem{lemma}[theorem]{Lemma}
\newtheorem{corollary}[theorem]{Corollary}
\newtheorem*{corollary*}{Corollary}
\newtheorem{theoremA}{Theorem}
\theoremstyle{definition}
\theoremstyle{remark}
\newtheorem{example}[theorem]{Example}
\newtheorem{remark}[theorem]{Remark}
\author{Valerio Proietti}
\address{Department of Mathematics, University of Oslo, P.O. box 1053, Blindern, 0316 Oslo, Norway}
\email{valeriop@math.uio.no}
\author{Makoto Yamashita}
\address{Department of Mathematics, University of Oslo, P.O. box 1053, Blindern, 0316 Oslo, Norway}
\email{makotoy@math.uio.no}
\title[Structural results on groupoid homology]{Homology and K-theory of dynamical systems\\ IV. Further structural results on groupoid homology}
\date{v2: April 2, 2024, minor revision; v1: October 15, 2023}
\begin{document}

\begin{abstract}
We consider the homology theory of étale groupoids introduced by Crainic and Moerdijk, with particular interest to groupoids arising from topological dynamical systems.
We prove a Künneth formula for products of groupoids and a Poincaré-duality type result for principal groupoids whose orbits are copies of a Euclidean space.
We conclude with a few example computations for systems associated to nilpotent groups such as self-similar actions, and we generalize previous homological calculations by Burke and Putnam for systems which are analogues of solenoids arising from algebraic numbers.
For the latter systems, we prove the HK conjecture, even when the resulting groupoid is not ample.
\end{abstract}

\makeatletter
\@namedef{subjclassname@2020}{\textup{2020} Mathematics Subject Classification}
\makeatother

\subjclass[2020]{37B02; 22A22, 18G10}
\keywords{groupoid homology, Künneth formula, Poincaré duality, topological dynamics, derived functors.}

\maketitle
\setcounter{tocdepth}{1}
\tableofcontents

\addtocontents{toc}{\setcounter{tocdepth}{-10}}
\section*{Introduction}
\addtocontents{toc}{\setcounter{tocdepth}{1}}
In this paper we prove two structural results for the homology groups of topological dynamical systems, continuing our work \citelist{\cite{valmak:groupoid}\cite{valmak:groupoidtwo}}.
One is a Künneth type formula for the product of two systems, while the other reduces the homological computation to the (compactly supported) cohomology of the underlying space under certain conditions.

Previously we have focused on totally disconnected systems, which are represented by \emph{ample groupoids}, i.e., étale groupoids whose unit space is totally disconnected.
In this paper we do not work under this restriction but we deal with (more general) dynamical systems of \emph{finite topological dimension}.
A motivating class of examples is that of \emph{Smale spaces} \cite{ruelle:thermo}, which capture hyperbolicity on compact metric spaces, such as the dynamics of Anosov diffeomorphisms and more generally those on the basic sets of Axiom A diffeomorphisms.
 This has led to interesting intersection between the theory of dynamical systems and operator algebras.

We mainly work within the homology theory introduced by M.~Crainic and I.~Moerdijk \cite{cramo:hom}.
It defines homology groups with coefficient in equivariant sheaves to any étale groupoid, based on sheaves, derived formalism, and simplicial methods.

In another direction, given a locally compact étale groupoid (and more generally a locally compact groupoid with a continuous Haar system), there is a convolution product on the space of compactly supported continuous functions on the groupoid, which can be completed to a C$^*$-algebra \cite{MR584266}.
The $K$-groups of this C$^*$-algebra can be regarded as a homological invariants of the original groupoid which has better connection to index theory, classification of C$^*$-algebras, and other topics involving operator $K$-theory.

Comparison between these two theories has been a major driving force behind our previous works, and it plays an important role in this note too.

\medskip
Let us summarize the main conceptual results in this note.

\begin{theoremA}[Theorem \ref{thm:kunneth-formula}]\label{thmA:kunneth}
Let $G$ and $H$ be étale groupoids, and $\cS$ and $\cT$ be $G$- and $H$-equivariant sheaves.
Then there is a split short exact sequence
\begin{multline*}
0 \to {\bigoplus_{a + b = k}} H_a(G, \cS) \otimes H_b(H, \cT) \to H_k(G \times H, \cS\boxtimes\cT)\\
\quad \to {\bigoplus_{a + b = k-1}} \Tor(H_a(G, \cS), H_b(H, \cT)) \to 0.
\end{multline*}
\end{theoremA}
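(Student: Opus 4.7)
The plan is to pass through the standard simplicial chain model for Crainic--Moerdijk homology. For an étale groupoid $G$ and a $G$-equivariant sheaf $\cS$, $H_*(G,\cS)$ is computed by the chain complex $C_n(G,\cS)=\Gamma_c(G^{(n)},t^*\cS)$ on the nerve $G^{(n)}$ of composable $n$-tuples, with differential equal to the alternating sum of pushforwards along the face maps. The first observation is that the nerve of a product groupoid splits as $(G\times H)^{(n)}=G^{(n)}\times H^{(n)}$, and that the pullback of $\cS\boxtimes\cT$ to this product is the external product of the pullbacks of $\cS$ and $\cT$ along the respective target maps.

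Combined with the Fubini-type identity for compactly supported sections of sheaves on locally compact spaces,
\[
\Gamma_c(X\times Y,\cE\boxtimes\cF)\cong\Gamma_c(X,\cE)\otimes_{\Z}\Gamma_c(Y,\cF),
\]
this yields a natural isomorphism $C_n(G\times H,\cS\boxtimes\cT)\cong C_n(G,\cS)\otimes C_n(H,\cT)$ in each degree $n$. The right-hand side is exactly the diagonal of the bisimplicial abelian group $(p,q)\mapsto C_p(G,\cS)\otimes C_q(H,\cT)$, so the Eilenberg--Zilber theorem (realised by the shuffle and Alexander--Whitney maps) provides a natural chain homotopy equivalence
\[
C_*(G\times H,\cS\boxtimes\cT)\simeq\Tot\bigl(C_*(G,\cS)\otimes C_*(H,\cT)\bigr).
\]
Granted this, the desired split short exact sequence follows from the algebraic Künneth formula for the tensor product of two chain complexes of abelian groups, and the splitting is natural in the usual way.

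The main technical obstacle is ensuring the flatness hypothesis needed to apply the algebraic Künneth formula. If the stalks of $\cS$ are torsion-free then $\Gamma_c(G^{(n)},t^*\cS)$ is a torsion-free, hence $\Z$-flat, abelian group and the argument above goes through directly. For a general equivariant sheaf I would first resolve $\cS$ by a complex of $G$-equivariant sheaves with torsion-free stalks---for example, built from extensions by zero of the constant sheaf $\sfZ$ along open bisections of $G$---and then deduce the Künneth sequence by comparing the hyperhomology spectral sequences of the double complexes $C_*(G,\cS^{\bullet})\otimes C_*(H,\cT)$ and $C_*(G\times H,\cS^{\bullet}\boxtimes\cT)$. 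Checking that naturality and the splitting survive this passage to resolutions, and that the Eilenberg--Zilber map is compatible with the Crainic--Moerdijk derived formalism, is routine but is where I expect the bulk of the bookkeeping to lie.
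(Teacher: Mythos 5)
Your architecture is the same as the paper's: split the nerve of the product as $(G\times H)^{(n)}=G^{(n)}\times H^{(n)}$, use the Fubini identity for compactly supported sections to identify the chains of the product with the diagonal of the bisimplicial group $(p,q)\mapsto C_p(G,\cS)\otimes C_q(H,\cT)$, invoke Eilenberg--Zilber to pass to the total complex, and finish with the algebraic K\"unneth formula. The paper does exactly this, except that it carries the c-soft resolutions along from the start (so the objects are bi- and quadruple complexes), applies Eilenberg--Zilber levelwise to a bisimplicial object in the category of complexes, and upgrades the levelwise chain equivalences to a quasi-isomorphism of totalizations via a regularity argument for the associated spectral sequences.

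There is, however, a concrete gap in your treatment of coefficients. The complex $n\mapsto\Gamma_c(G^{(n)},s^*\cS)$ computes Crainic--Moerdijk homology only when $\cS$ is c-soft; in general the definition requires first resolving $\cS$ by c-soft $G$-sheaves and taking a total complex, because $\Gamma_c$ is not exact. So your first case (``if the stalks of $\cS$ are torsion-free the argument goes through directly'') already fails for, say, $\cS=\sfZ$ on a manifold, which is torsion-free but not c-soft. More importantly, the flat resolution you propose for the general case --- extensions by zero $j_!\sfZ$ along open bisections --- consists of sheaves that are c-soft only when the unit space is totally disconnected: for $U=(0,1)\subset\R$ one has $\Gamma(\R,j_!\Z_U)=0$ while $\Gamma(K,j_!\Z_U)\neq 0$ for a compact interval $K\subset U$, so $j_!\Z_U$ is not c-soft. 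Since the whole point of this installment of the paper is to go beyond ample groupoids, such resolutions cannot replace the c-soft ones, and the double complexes $C_*(G,\cS^\bullet)\otimes C_*(H,\cT)$ you write down would not compute the relevant hyperhomology. What is actually needed is a resolution that is simultaneously c-soft and flat (these exist); c-softness plus flatness is also precisely what validates the Fubini isomorphism $\Gamma_c(X\times Y,\cS\boxtimes\cT)\cong\Gamma_c(X,\cS)\otimes\Gamma_c(Y,\cT)$ on each nerve level and guarantees that the resulting groups of compactly supported sections are flat, which is the hypothesis of the algebraic K\"unneth theorem. Once the resolutions are chosen this way, the rest of your argument goes through and coincides with the paper's proof.
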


\begin{theoremA}[Theorem \ref{thm:poincare-duality}]\label{thmA:poincare-duality}
Let $\tilde G$ be a locally compact principal groupoid such that $\tilde G^x$ is homeomorphic to $\R^n$ for some fixed $n$.
Suppose there exists a generalized transversal $T$ in $\tilde G^{(0)}$, so that $G = \tilde G|_T$ is an étale groupoid.
We then have an isomorphism
\[
H_k(G, \sfZ) \cong H_c^{n-k}(\tilde G^{(0)}, \sfZ \times_{\Z/2\Z} \sfo),
\]
where on the right-hand side cohomology is computed with coefficients in the orbit-wise orientation sheaf over the unit space of $\tilde G$.
\end{theoremA}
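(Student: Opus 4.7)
The plan is to reduce Poincaré duality for the étale groupoid $G = \tilde G|_T$ to fiberwise Poincaré duality for the $\R^n$-orbits of $\tilde G$, combined with a comparison between $\tilde G$-equivariant data on $\tilde G^{(0)}$ and $G$-equivariant data on the transversal $T$. Concretely, I would construct a bicomplex whose two filtrations compute the two sides of the desired isomorphism, and whose associated spectral sequences both degenerate quickly, forcing the isomorphism between the resulting bigraded pieces.

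First, I would choose a $\tilde G$-equivariant $c$-soft resolution $\cF^\bullet$ of the constant sheaf $\sfZ$ on $\tilde G^{(0)}$ (e.g.\ a Godement-type resolution, or a resolution by sheaves of singular cochains). Tensoring with the orbitwise orientation sheaf $\sfo$ and taking compactly supported global sections yields a complex computing $H_c^*(\tilde G^{(0)}, \sfZ \times_{\Z/2\Z} \sfo)$. Since each source fiber $\tilde G^x$ is homeomorphic to $\R^n$, classical Poincaré duality says that the compactly supported cohomology along an orbit, twisted by the orbitwise orientation, is concentrated in degree $n$ and isomorphic to $\Z$. This is the source of both the degree shift $k \leftrightarrow n-k$ and the orientation twist in the statement.

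Next, I would combine this soft resolution with the simplicial nerve $\tilde G^{(\bullet)}$ of $\tilde G$, producing the promised bicomplex. Restriction along the transversal $T \hookrightarrow \tilde G^{(0)}$ turns $\tilde G^{(\bullet)}$ into the étale nerve $G^{(\bullet)}$, so the bar-type total complex computes $H_*(G, \sfZ)$ in the Crainic--Moerdijk sense. A Morita-invariance argument for homology under the principal equivalence between $G$ and $\tilde G$, composed with fiberwise Poincaré duality in the other direction, then identifies the two abutments and yields the isomorphism.

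The main obstacle, as I see it, will be to combine all of this into a single coherent bicomplex that computes both sides simultaneously and to verify that the two associated spectral sequences degenerate as expected. Two technical points deserve particular care: first, choosing the $\tilde G$-equivariant $c$-soft resolution so that it restricts well to $T$ and remains compatible with the simplicial structure of the nerve; second, globalizing fiberwise Poincaré duality to account for the possible non-triviality of $\sfo$ across orbits, which is precisely what the twist $\sfZ \times_{\Z/2\Z} \sfo$ in the statement records. Once these compatibilities are in place, the claimed isomorphism should emerge by comparing the abutments of the two spectral sequences attached to the bicomplex.
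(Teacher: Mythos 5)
Your overall strategy \textemdash{} fiberwise Poincar\'e duality along the $\R^n$-orbits concentrating everything in a single degree, an orientation twist to make that identification canonical, a quickly degenerating spectral sequence, and a Morita-invariance step \textemdash{} is the same as the paper's. But there is a genuine gap at the center of your construction: the bicomplex you propose to build from ``the simplicial nerve $\tilde G^{(\bullet)}$'' together with a c-soft resolution is not available, because $\tilde G$ is \emph{not} \'etale. The Crainic--Moerdijk complex $\Gamma_c(G^{(i)}, s^*F^j)$ only carries its simplicial (face map) structure because the source map of an \'etale groupoid has discrete fibers, over which one can sum compactly supported sections; for $\tilde G$, whose source fibers are copies of $\R^n$, no such bar complex computes a groupoid homology in this framework, and ``restriction along $T$'' of such a nonexistent complex cannot be compared to the complex computing $H_*(G,\sfZ)$. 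Likewise, the Morita equivalence you invoke between $G$ and $\tilde G$ does not directly transport Crainic--Moerdijk homology, since the latter is undefined for $\tilde G$.

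The missing idea, which is how the paper resolves this, is to replace $\tilde G$ by an auxiliary \'etale groupoid: set $E = \tilde G^T$ and consider $G \ltimes E$, which is \'etale (because $G$ is) and is equivalent, via the source map $E \to \tilde G^{(0)}$, to the \emph{space} $\tilde G^{(0)}$ viewed as a trivial groupoid. Hence $H_{p-n}(G\ltimes E, s^*(\sfZ\times_{\Z/2\Z}\sfo)) \cong H_c^{n-p}(\tilde G^{(0)}, \sfZ\times_{\Z/2\Z}\sfo)$, which is the right-hand side of the theorem. One then applies the Leray-type spectral sequence of Crainic--Moerdijk to the projection $\phi\colon G\ltimes E \to G$: the comma groupoids $x/\phi$ are equivalent to $\tilde G^x \cong \R^n$, so the stalkwise computation gives $L_q\phi_! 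F = 0$ for $q \neq -n$ and $L_{-n}\phi_! F \cong \sfZ$, and the spectral sequence degenerates for degree reasons. Note also that the step you defer as a ``technical point'' \textemdash{} checking that the fiberwise identifications assemble into an isomorphism of $G$-sheaves $L_{-n}\phi_! F \cong \sfZ$ \textemdash{} is precisely where the twist by $\sfo$ does its work (reversing the orientation changes both the trivialization of $F$ on the fiber and the generator of $H_c^n(\R^n,\sfZ)$, so the two sign changes cancel), and it must actually be carried out rather than only flagged.
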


In operator $K$-theory, Theorem \ref{thmA:kunneth} has a direct analogue which is the Künneth formula of $K$-groups of tensor product C$^*$-algebras due to Schochet \cite{MR650021}.
Theorem \ref{thmA:poincare-duality} can also be also interpreted as an analogue of Connes's Thom isomorphism for $K$-groups \cite{MR605351} (for a groupoid of the form $\R^n\ltimes X$, this result reduces operator $K$-groups of the associated $C^*$-crossed product to the topological $K$-groups of $X$, up to a degree shift determined by $n$).

However, the proofs are completely different from the $K$-theoretic ones.
While the proof of Theorem \ref{thmA:kunneth} is a standard Eilenberg--Zilber type manipulation of multicomplexes, for Theorem \ref{thmA:poincare-duality} we make an essential use of the sheaf theoretic idea and the formalism of derived functors.

Theorem \ref{thmA:poincare-duality} is particularly useful in computing the homology of groupoids coming from dynamical systems on manifolds and related structures.
Moreover, the idea behind this theorem is also useful to study systems of number theoretic origin.

We look at a class of Smale system $(Y^{(c)},\phi)$ that appears in the theory of algebraic actions due to Schmidt \cite{MR1345152}.
Given an algebraic number $c$, the space $Y^{(c)}$ is given as the Pontryagin dual of the additive group of a certain subring of $K=\Q(c)$, and $\phi$ is the natural map induced by the multiplication by $c$ (see Section \ref{sec:ntsol} for details on this construction).

\begin{theoremA}[Theorem \ref{thm:solstat}]\label{thmA:hk}
Denote by $G$ the (un)stable étale groupoid associated to $(Y^{(c)},\phi)$.
Then the groupoid homology of $G$ can be presented as the inductive limit of exterior power of a certain subgroup $\Gamma$ of the additive group of $K$,
\[
H_k(G,\sfZ)=\varinjlim \medwedge^{k+d} \Gamma,
\]
with respect to the connecting map $\theta_k \colon \medwedge^k \Gamma \to \medwedge^k \Gamma$. The shift is given by a natural number $d$ which depends on the infinite places of $K$. The map $\theta_k$ is the unique extension of $N\bigwedge^k m_{c^{-1}}$, with a natural number $N$, and $m_{c^{-1}}$ denoting multiplication by $c^{-1}$.
\end{theoremA}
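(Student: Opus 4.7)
My plan is to combine Theorem \ref{thmA:poincare-duality} with the presentation of $Y^{(c)}$ as an inverse limit of tori, converting the groupoid homology into ordinary cohomology of the solenoid and then into exterior powers of lattices in $K$. First I set up the hyperbolic structure on $(Y^{(c)},\phi)$: the Archimedean embedding $K\otimes_\Q \R \cong \prod_{v\mid\infty} K_v$ splits multiplication by $c$ into expanding and contracting parts of real dimensions $d_u$ (places with $|c|_v > 1$) and $d$ (places with $|c|_v < 1$), with $d_u + d = [K:\Q]$. The unstable equivalence relation $\tilde G$ on $Y^{(c)}$ is then a locally compact principal groupoid with $\tilde G^x \cong \R^{d_u}$ at every $x$, and a Markov-type generalized transversal $T$ realizes the étale groupoid $G = \tilde G|_T$ of the statement. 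Because the unstable leaves are oriented linear subspaces of the adelic model, the orbit-wise orientation sheaf trivializes, and Theorem \ref{thmA:poincare-duality} yields
\[
H_k(G,\sfZ) \;\cong\; H^{d_u - k}_c(Y^{(c)}, \sfZ).
\]

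Second, I present $\Gamma$ as a directed union $\Gamma = \bigcup_n \Gamma_n$ of full-rank $\Z$-lattices in $K$, with inclusions as transitions. Pontryagin duality then turns this into $Y^{(c)} = \widehat\Gamma = \varprojlim_n \widehat{\Gamma_n}$, an inverse limit of $(d_u+d)$-dimensional tori. Continuity of Čech cohomology on inverse limits of compact Hausdorff spaces, combined with the standard computation $H^*(\widehat L,\Z) \cong \medwedge^* L$ and Poincaré duality on each torus, identifies
\[
H^{d_u - k}_c(Y^{(c)}, \sfZ) \;\cong\; \varinjlim_n \medwedge^{k+d}\Gamma_n,
\]
producing the shift $d$ in the statement.

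Third, I identify the transition maps on $\medwedge^{k+d}\Gamma_n$. Trivializing each $\Gamma_n$ against a fixed lattice $\Gamma \subset K$ by multiplication by a suitable power of $c$, the inclusion $\Gamma_n \hookrightarrow \Gamma_{n+1}$ translates into an integral endomorphism of $\Gamma$ of the form $Nm_{c^{-1}}$, with the natural number $N$ (a denominator-clearing factor related to $|N_{K/\Q}(c)|$) ensuring that $c^{-1}$-multiplication preserves the lattice after rescaling; the induced map on $\medwedge^{k+d}\Gamma$ is the claimed $\theta_k$, and its unique extension property follows because the $\Z$-linear map $Nm_{c^{-1}}$ on $\Gamma$ is already prescribed on a $\Z$-basis. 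The principal obstacle lies in the first step: carefully identifying the unstable groupoid of the algebraic Smale space $(Y^{(c)},\phi)$ with a principal groupoid satisfying the hypotheses of Theorem \ref{thmA:poincare-duality}, including trivializing the orientation sheaf coming from the adelic leaf structure. Once that geometric input is secured, the remainder is a controlled inverse-limit computation of torus cohomology combined with a bookkeeping of the connecting maps.
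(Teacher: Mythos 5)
Your first step is where the argument collapses. The unstable equivalence class of a point of $Y^{(c)}$ is $\prod_{v\in P^u(c)}K_v$, and $P^u(c)$ contains not only the Archimedean places with $\absv{c}_v>1$ but also the \emph{finite} places with $\absv{c}_v>1$; unless $c$ is an algebraic integer these contribute totally disconnected factors, so $\tilde G^x\cong\R^{d_u}\times\prod_{v\in P^u_f(c)}K_v$ is not homeomorphic to a Euclidean space and Theorem \ref{thm:poincare-duality} does not apply (the paper says exactly this at the start of Section \ref{sec:ntsol}). The failure is not cosmetic. For $c=q/p$ with primes $p<q$ the correct answer is $H_0(G)\cong\Z[1/q]$ and $H_1(G)\cong\Z[1/p]$, whereas $\check H^\bullet(Y^{(c)},\Z)\cong\medwedge^\bullet R_c$ --- which is what your inverse-limit-of-tori computation produces, since the transition maps dual to the inclusions of lattices induce precisely the inclusions on $\check H^1$ --- gives $\Z$ and $\Z[1/pq]$; no degree shift or twist by an orientation sheaf reconciles the two. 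The underlying reason is that the $p$-adic leaf directions cannot be integrated out by a Thom-type degeneration: for finite $v$, $H_c^\bullet(K_v,\sfZ)$ is concentrated in degree $0$ but equals the huge group $C_c(K_v,\Z)$ rather than $\Z$, so passing to the cohomology of the ambient solenoid genuinely loses information.

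The paper's route keeps the finite places in play. One projects $R_c\ltimes\prod_{v\in P^s(c)}K_v$ only onto the finite-place factors, so that the fibres are genuinely $\R^d$ and a Leray/Thom argument (Proposition \ref{prop:poincare-type-iso-for-alg-action}) yields the shift by $d$; the remaining \emph{ample} groupoid $R_c\ltimes\prod_{v\in P^s_f(c)}K_v$ is an inverse limit of discrete transitive $R_c$-sets $X^{(n)}$ with stabilizers $\Gamma_n$, so its homology is $\varinjlim_n H_\bullet(\Gamma_n)$ with \emph{transfer} maps as connecting maps. This exposes the second discrepancy in your write-up: the group $\Gamma$ of the statement is $\Gamma_0=\{a\in R_c:\absv{a}_v\le1\ \text{for }v\in P^s_f(c)\}$, sitting in a \emph{decreasing} chain $\Gamma_{n+1}<\Gamma_n$ of finite-index subgroups with $N=[\Gamma_0:\Gamma_1]$, not an increasing union of full-rank lattices with inclusions as transitions; the connecting map $\theta_k$ is the transfer (multiplication by $N$ on the subgroup) composed with $\medwedge^km_{c^{-1}}$, which a Čech-continuity computation on $Y^{(c)}$ cannot produce. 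Your third step's description of $N$ and of the unique-extension property is in the right spirit, but it needs to be attached to this transfer-map picture rather than to lattice inclusions.
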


An similar computation can be carried out for the K-groups of the groupoid C$^*$-algebra $C^*_rG$.

\begin{theoremA}[Theorem \ref{thm:hksolthm}]\label{thmA:hkk}
Denote by $G$ the (un)stable étale groupoid associated to $(Y^{(c)},\phi)$. With notation as in Theorem \ref{thmA:hk},
there is an isomorphism 
\[
K_i(C^*_rG)\cong \varinjlim \bigoplus_{k\in\Z}\medwedge^{i+d+2k} \Gamma.
\]
\end{theoremA}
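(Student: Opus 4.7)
The plan is to compute $K_i(C^*_r G)$ directly by combining a Morita equivalence, Connes' Thom isomorphism, and the solenoid structure of $Y^{(c)}$, and then match the result with Theorem \ref{thmA:hk}; this will incidentally establish the HK conjecture $K_i(C^*_r G)\cong\bigoplus_k H_{i+2k}(G,\sfZ)$ for these non-ample systems. Since $G=\tilde G|_T$ arises from a generalized transversal of the principal groupoid $\tilde G$ with $\R^d$-orbits of Theorem \ref{thmA:poincare-duality}, the $C^*$-algebras $C^*_r G$ and $C^*_r \tilde G$ are Morita equivalent and thus share K-theory. Identifying $\tilde G$ with $\R^d\ltimes Y^{(c)}$ under the natural Euclidean action coming from the archimedean embeddings of $K=\Q(c)$, one has $C^*_r\tilde G\cong C_0(Y^{(c)})\rtimes\R^d$, so Connes' Thom gives the K-theoretic counterpart of Theorem \ref{thmA:poincare-duality}:
\[
K_i(C^*_r G)\cong K^{i+d}(Y^{(c)}).
\]

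To evaluate the right-hand side, we use the presentation $\Gamma=\varinjlim_j\Gamma_j$ with $\Gamma_j\cong\Z^d$ to write $Y^{(c)}=\varprojlim_j\widehat{\Gamma_j}$ as an inverse limit of $d$-tori, with connecting maps dual to the rescaled multiplication by $c^{-1}$ described in Theorem \ref{thmA:hk}. The integral K-theory of $T^d$ is torsion-free and splits as $K^\epsilon(T^d)\cong\bigoplus_{k\in\Z}\medwedge^{\epsilon+2k}\Gamma_j$. Combining continuity of topological K-theory under inverse limits of compacta (the groups in question being torsion-free, the Milnor $\varprojlim^1$ term vanishes) with the naturality of this splitting produces
\[
K^{i+d}(Y^{(c)})\cong\varinjlim_j\bigoplus_{k\in\Z}\medwedge^{i+d+2k}\Gamma_j\cong\varinjlim\bigoplus_{k\in\Z}\medwedge^{i+d+2k}\Gamma,
\]
which is the desired formula.

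The delicate point is the compatibility of transition maps: the K-theory map induced by the dual inclusion $\widehat{\Gamma_{j+1}}\to\widehat{\Gamma_j}$ must equal, on the $\medwedge^{i+d+2k}$ summand, the map $\theta_{i+2k}$ produced by the sheaf-theoretic derived functors underlying Theorem \ref{thmA:hk}. Both ultimately descend from the same endomorphism $N\cdot m_{c^{-1}}$ of $\Gamma$, but via different routes. We expect to verify compatibility by reducing both sides to compactly supported cohomology of $Y^{(c)}$: the K-theoretic side through the integral Chern isomorphism on tori, and the homological side through Theorem \ref{thmA:poincare-duality}; the remaining task is then the naturality of the cohomology of tori under the endomorphisms $\widehat{m_c}$, and we expect this bookkeeping to absorb most of the technical work.
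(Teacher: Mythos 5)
Your first reduction is where the argument breaks. The unstable groupoid $\tilde G=R^u(Y^{(c)},\phi)$ has orbits equal to the unstable sets, which are copies of $\prod_{v\in P^u(c)}K_v$; these contain non-Archimedean factors in general, so $\tilde G$ is not a transformation groupoid of $\R^d$ acting on $Y^{(c)}$ (and the paper explicitly notes that Theorem \ref{thm:poincare-duality} does not apply directly to these solenoids). Moreover, the $d$ of Theorem \ref{thmA:hk} is the Archimedean dimension of the \emph{stable} directions, i.e.\ of the unit space of the reduced groupoid $G\cong R_c\ltimes\prod_{v\in P^s(c)}K_v$, not of the orbits of $\tilde G$. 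The resulting formula $K_i(C^*_rG)\cong K^{i+d}(Y^{(c)})$ is false: for $c=q/p$ one has $d=0$ and $Y^{(c)}=\widehat{\Z[1/pq]}$ is the $pq$-adic solenoid, with $K^0(Y^{(c)})\cong\Z$ and $K^1(Y^{(c)})\cong\Z[1/pq]$, whereas $K_0(C^*_rG)\cong\Z[1/q]$ and $K_1(C^*_rG)\cong\Z[1/p]$, matching \eqref{eq:R-c-Q-q-homology}. The later steps inherit the problem: $Y^{(c)}$ is $\widehat{R_c}$, not $\varprojlim_j\widehat{\Gamma_j}$, and the groups $\Gamma_j$ are not finitely generated in general, so they are not duals of $d$-tori.

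The correct route (the one the paper takes) keeps a Thom isomorphism but applies it to the unit space rather than to the orbits. After the Morita reduction to $C_0(\prod_{v\in P^s(c)}K_v)\rtimes R_c$, one restricts the $\R^d$-equivariant Dirac element of $C_0(\R^d)$ to the dense subgroup $R_c\subset\R^d$ to strip the Archimedean factor off the \emph{coefficient algebra} (Proposition \ref{prop:K-theoretic-deg-shift-isom}), landing on the ample groupoid $R_c\ltimes\prod_{v\in P^s_f(c)}K_v$. Its $C^*$-algebra is the inductive limit of $C_0(X^{(n)})\rtimes R_c$, each Morita equivalent to $C^*(\Gamma_n)$, and only at this stage does the ``$K$-theory of tori equals the exterior algebra'' computation enter, via $C^*(\Gamma_n^k)\cong C(\bT^{\mathrm{rk}\,\Gamma_n^k})$ for an exhausting family of finitely generated free subgroups $\Gamma_n^k<\Gamma_n$ followed by a colimit; the connecting maps are then identified with the unique extension of $N\medwedge^{[i]}m_{c^{-1}}$ by an explicit matrix computation for the finite-index inclusions $\Gamma_{n+1}<\Gamma_n$. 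If you want to salvage your outline, replace ``Connes' Thom for $\R^d\ltimes Y^{(c)}$'' by this Dirac-element argument on the coefficient algebra, and run the colimit over the stabilizer groups $\Gamma_n$ rather than over approximations of $Y^{(c)}$; the compatibility-of-transition-maps issue you flag at the end then disappears, since both the homological and the $K$-theoretic computations pass through the same inductive system.
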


Consequently, the $K$-group is isomorphic to the direct sum of groupoid homology with the same degree parity.
\begin{corollary*}[Corollary \ref{cor:HK-conj-for-num-theretic-solenoid}]
In the setting of Theorem \ref{thmA:hk} and Theorem \ref{thmA:hkk}, we have
\[
K_i(C^*_r G)\cong \bigoplus_{k\in \Z}H_{i + 2 k}(G,\sfZ).
\]
\end{corollary*}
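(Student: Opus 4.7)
The plan is to derive the corollary directly from the two inductive-limit descriptions provided by Theorem \ref{thmA:hk} and Theorem \ref{thmA:hkk}, using the fact that filtered colimits in the category of abelian groups commute with arbitrary direct sums.

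The first step is to observe that the directed system computing $K_i(C^*_rG)$ in Theorem \ref{thmA:hkk}, namely $\bigoplus_{k\in\Z}\medwedge^{i+d+2k}\Gamma$, splits as a direct sum of the directed systems appearing in Theorem \ref{thmA:hk}. Concretely, one must check that the connecting map on each summand $\medwedge^{i+d+2k}\Gamma$ agrees with the map $\theta_{i+2k}$ from Theorem \ref{thmA:hk}, so that taking the colimit summand by summand yields the same result as taking the colimit of the total direct sum system. I would expect this compatibility to be manifest from the construction used to prove Theorem \ref{thmA:hkk}, where (by the statement) the connecting maps are built from the same ingredient $N\bigwedge^{\bullet} m_{c^{-1}}$ appearing in Theorem \ref{thmA:hk}, so that there is essentially nothing to verify beyond matching indices.

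Once this identification is in place, the corollary follows from the chain of isomorphisms
\[
K_i(C^*_rG)\;\cong\;\varinjlim \bigoplus_{k\in\Z}\medwedge^{i+d+2k}\Gamma\;\cong\;\bigoplus_{k\in\Z} \varinjlim \medwedge^{i+d+2k}\Gamma\;\cong\;\bigoplus_{k\in\Z} H_{i+2k}(G,\sfZ),
\]
where the first isomorphism is Theorem \ref{thmA:hkk}, the middle step is the commutation of filtered colimits with direct sums in the category of abelian groups, and the last is Theorem \ref{thmA:hk}.

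The only obstacle, such as it is, lies in the bookkeeping step of identifying the connecting maps on the individual summands in Theorem \ref{thmA:hkk} with the maps $\theta_{i+2k}$ of Theorem \ref{thmA:hk}; no additional substantive ideas are needed, and the statement is essentially a formal consequence of the two preceding theorems combined with the exactness of filtered colimits.
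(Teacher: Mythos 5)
Your proposal is correct and matches the paper's (implicit) argument: the paper states the corollary without proof as an immediate consequence of Theorems \ref{thm:solstat} and \ref{thm:hksolthm}, precisely because the connecting maps in the $K$-theoretic inductive system are, by construction, the direct sum over parity classes of the connecting maps $\theta'$ in the homological one, so the identification reduces to commuting the filtered colimit with the direct sum. The bookkeeping of the degree shift $d$ (coming from Propositions \ref{prop:poincare-type-iso-for-alg-action} and \ref{prop:K-theoretic-deg-shift-isom}) is handled correctly in your chain of isomorphisms.
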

This confirms, for the class of groupoids from Theorem \ref{thmA:hk}, the ``HK conjecture'' formulated by Matui \cites{MR3552533,MR3837599} in the setting of ample groupoids, which asks if there is an isomorphism between the $K$-groups and periodicized homology as above.

Theorem \ref{thmA:hk} is interesting for a few reasons.
Firstly, the computations given here generalize previous work by Burke and Putnam \cite{buput:ntsol}.
Secondly, the groupoids appearing in this context are not necessarily ample, providing us with examples where the HK conjecture holds beyond its original assumptions (the groupoid associated to an irrational flow on the torus provides another example, see Example \ref{exa:rotalg} for details).
Thirdly, a key intermediate result in this setting (Proposition \ref{prop:poincare-type-iso-for-alg-action}) showcases a ``variant'' of Theorem \ref{thmA:poincare-duality} above, more akin to the Thom isomorphism in cohomology than to a proper duality.

\medskip
The paper is organized as follows.
In Section \ref{sec:prelim} we recall a few basic notions and briefly summarize our previous work to set the conventions and background for this note.

In Section \ref{sec:kunneth-formula} we prove the Künneth formula (Theorem \ref{thmA:kunneth}) for the homology of the product of étale groupoids.
In Section \ref{sec:poincare-duality} we show the Poincaré duality-type result (Theorem \ref{thmA:poincare-duality}), and in the last two sections we present some concrete computations for notable examples of topological dynamical systems arising from expanding maps on compact manifolds, and and analogues of solenoids in algebraic number fields.

\subsection*{Acknowledgments}

V.P.:~this research was supported by: Foreign Young Talents' grant (National Natural Science Foundation of China), CREST Grant Number
JPMJCR19T2 (Japan Science and Technology Agency of the Ministry of Education, Culture, Sports, Science and Technology), Marie Skłodowska-Curie Individual Fellowship (project number 101063362).

M.Y.:~this research was funded, in part, by The Research Council of Norway [project 300837].
Part of the work for this project was carried out during M.Y.'s stay at the Research Institute for Mathematical Sciences (RIMS), Kyoto University, Japan.
He thanks N.~Ozawa and others at RIMS for their hospitality.

We would like to thank C.~Bruce and T.~Omland for comments on duality and $S$-integers.
\section{Preliminaries}\label{sec:prelim}

We fix conventions in use throughout the paper.
We only briefly recall definitions and generally follow the treatment in \citelist{\cite{valmak:groupoid}\cite{valmak:groupoidtwo}}.

\subsection{Topological groupoids and homology groups}

We mainly work with second countable, locally compact, Hausdorff groupoids.
Given such a groupoid $G$, we denote its base space by $G^{(0)}$, with structure maps $s, r \colon G \to G^{(0)}$, and the $n$-th
nerve space (for $n\geq 1$) given by
\[
G^{(n)} = \{(g_1, \dots, g_n) \in G^n \colon s(g_i) = r(g_{i+1}) \}.
\]
We say that $G$ is \emph{étale} if $s$ and $r$ are local homeomorphisms, and \emph{ample} if it is étale and its base space is totally disconnected.

\medskip

We consider the homology of étale groupoids as defined by Crainic and Moerdijk \cite{cramo:hom}.
A \emph{$G$-sheaf} is a sheaf $F$ on $G^{(0)}$ endowed with a continuous action of $G$, modeled by a map of sheaves $s^* F \to r^* F$ on $G$.
A $G$-sheaf $F$ is said to be (c-)soft when the underlying sheaf on $G^{(0)}$ has that property, that is, for any (compact) closed subset $S \subset G^{(0)}$ and any section $x \in \Gamma(S, F)$, there is an extension $\tilde x \in \Gamma(X, F)$.

Our standing assumption is that the cohomological dimension of any open sets of $G^{(0)}$ is bounded by some fixed integer $N$: to be precise, $H_c^k(U, F) = 0$ for any open subset $U \subset G^{(0)}$ and any $k > N$.

This is guaranteed when $G^{(0)}$ is a subspace of a metrizable space of (Lebesgue) topological dimension $N$, which covers all the concrete examples we consider.
To see this, observe that the topological dimension of any compact subset $A \subset G^{(0)}$ is bounded by $N$ \cite{MR0482697}*{Section 3.1}, then the Čech cohomology $\check{H}^\bullet(A; F)$ will vanish in degree above $N$.
By paracompactness, $\check{H}^\bullet(A; F)$ agrees with the sheaf cohomology $H^\bullet(A; F) = R^\bullet \Gamma_A(F)$ for the right derived functor of the functor $\Gamma_A(F) = \Gamma(A, F)$, then we can combine \cite{MR0345092}*{Remarque II.4.14.1 and Théorème II.4.15.1} to get the claim.

Let $F$ be a $G$-sheaf.
Then there is a resolution of $F$ as above by c-soft $G$-sheaves, and the \emph{homology with coefficient $F$}, denoted $H_\bullet(G, F)$, is defined as the homology of the total complex of the double complex $(C_i^j)_{0 \le i,j}$ with terms
\[
C_i^j = \Gamma_c(G^{(i)}, s^* F^j),
\]
which has homological degree $i - j$.
More generally, when $F_\bullet$ is a homological complex of $G$-sheaves bounded from below, take a resolution of each $F_j$ by c-soft $G$-sheaves $F_j^k$ as above.
Then the \emph{hyperhomology with coefficient $F_\bullet$}, denoted by $\bH_\bullet(G, F_\bullet)$, is the homology of triple complex with terms
\[
C_{i,j}^k = \Gamma_c(G^{(i)}, s^* F_j^k),
\]
which has homological degree $i + j - k$.

When two étale groupoids $G$ and $H$ are Morita equivalent, there are natural correspondences between the $G$-sheaves and $H$-sheaves inducing an isomorphism of groupoid homology.
In particular, if $f \colon H \to G$ is a Morita equivalence homomorphism, we have
\[
\bH_\bullet(G, F_\bullet) \cong \bH_\bullet(H, f^* F_\bullet)
\]
for any complex $F_\bullet$ of $G$-sheaves as above.
Note that the existence of $f$ is equivalent to $G$ and $H$ being equivalent in the sense studied in \cite{murewi:morita} (see \cite{fkps:hk}*{Prop.~3.10}). For this reason we will often say that $G$ and $H$ are ``equivalent'' without any other specification.

\subsection{Derived functor formalism}\label{sec:der-func-formal}

We briefly recall the derived functor formalism of groupoid homology from \cite{cramo:hom}*{Section 4}.
Let $G$ and $G'$ be étale groupoids, and $\phi\colon G \to G'$ be a continuous groupoid homomorphism.
Then, for each $x \in G^{\prime(0)}$, the \emph{comma groupoid} $x / \phi$ is defined as the groupoid whose objects are the pairs $(y, g')$, where $y \in G^{(0)}$ and $g' \in G_{x}^{\prime \phi(y)}$, and an arrow from $(y_1, g'_1)$ to $(y_2, g'_2)$ is given by $g \in G_{y_1}^{y_2}$ such that $\phi(g) g'_1 = g'_2$.
This is an étale groupoid that comes with a homomorphism $\pi_x \colon x / \phi \to G$.

When $F$ is a $G$-sheaf, we consider a simplicial system of $G'$-sheaves, denoted by $B_\bullet(\phi, F)$, which at the level of stalks is given by
\[
B_n(\phi, F)_x = \Gamma_c((x/\phi)^{(n)}, s^* \pi_x^* F).
\]
Assume that the groupoids have good homological properties to define groupoid homology.
Then the above construction leads to the \emph{left derived functor} $\cL \phi_!$ from a category of homological complexes of $G$-sheaves bounded from below, to a similar category of complexes of $G'$-sheaves.

To be more concrete, let $F_\bullet$ be such a complex of $G$-sheaves.
Then $\cL \phi_! F_\bullet$ is represented by the total complex of the triple complex of $G'$-sheaves with terms $B_i(\phi, F_j^k)$ with homological degree $i + j - k$, where $F_j^\bullet$ is a bounded resolution of $F_j$ by c-soft $G$-sheaves.
This is well defined up to quasi-isomorphism of $G'$-sheaves.
The \emph{$n$-th derived functor}, denoted by $L_n \phi_! F_\bullet$, is the $G'$-sheaf given as the $n$-th homology of $\cL \phi_! F_\bullet$.
By construction the fiber of this sheaf is given by \cite{cramo:hom}*{Proposition 4.3}
\begin{equation}\label{eq:n-th-derived-stalk}
(L_n \phi_! F_\bullet)_x = H_n( x / \phi , \pi_x^* F_\bullet).
\end{equation}
When $G'$ is the trivial groupoid and $\phi$ is the unique homomorphism $G \to G'$, this recovers the definition of $H_n(G, F_\bullet)$.

Besides the pullback functor (the inverse image functor) for sheaves, we will also make use of the \emph{direct image} functor, simply defined as $g_*F(U)=F(g^{-1}(U))$ \cite{MR0345092}.
In the setting of equivariant sheaves, $g_*$ can also be defined, and it is still right adjoint to the pullback functor, see \cite{cramo:hom}*{Section 2.3} for details.
It is worth noting that, if $g$ is proper, then $g_*$ coincides with the functor $g_!$, sometimes called direct image functor with compact supports.

\subsection{Smale spaces}
\label{sec:smale-sp}

Many examples of groupoids in this note appear as (reduction of) the stable and unstable groupoid of a \emph{Smale space}.
A Smale space is a certain kind of hyperbolic dynamics modeled on a compact metric space $X$ with a self-homeomorphism $\phi$.
See \cite{put:HoSmale} for precise definition and conventions.
In particular, there are two distinguished equivalence relations on $X$, defined as follows.
\begin{itemize}
\item Two points $x$ and $y$ are \emph{stably equivalent} (denoted $x\sim_s y$) if
\[
\lim_{n\to \infty} d(\phi^{n}(x),\phi^{n}(y)) = 0;
\]
\item similarly, $x$ and $y$ are \emph{unstably equivalent} (denoted $x\sim_u y$) if
\[
\lim_{n\to \infty} d(\phi^{-n}(x),\phi^{-n}(y)) = 0.
\] 
\end{itemize}
The equivalence classes of the stable (resp.~unstable) equivalence relation are called the \emph{stable sets} (resp.~\emph{unstable sets}).

The graph of the stable (resp.~unstable) equivalence relation has a structure of locally compact groupoid with a Haar system \cite{put:algSmale}, that we denote by $R^s(X, \phi)$ (resp.~$R^u(X, \phi)$).
Following the construction detailed in \cite{put:spiel}, we obtain an étale groupoid by restricting $R^u(X,\phi)$ to an appropriate subspace contained in a finite union of stable sets.

\section{Künneth formula}
\label{sec:kunneth-formula}

Suppose $G$ and $H$ are étale groupoids such that groupoid homology is definable, and $\cS$ and $\cT$ are equivariant sheaves of abelian groups over $G^{(0)}$ and $H^{(0)}$ respectively.
Furthermore, denote by $p$ and $q$ the canonical projections from $G \times H$ to $G$ and $H$ respectively.
We define the sheaf $\cS \boxtimes \cT$ as $p^*\cS \otimes q^*\cT$.
Note this is a $G\times K$-equivariant sheaf over $G^{(0)} \times K^{(0)}$.

\begin{theorem}\label{thm:kunneth-formula}
Under the above setting, there is a split short exact sequence
\begin{multline*}
0 \to {\bigoplus_{a + b = k}} H_a(G, \cS) \otimes H_b(H, \cT) \to H_k(G \times H, \cS\boxtimes\cT)\\
\quad \to {\bigoplus_{a + b = k-1}} \Tor(H_a(G, \cS), H_b(H, \cT)) \to 0.
\end{multline*}
\end{theorem}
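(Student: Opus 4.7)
My plan is to follow the classical Eilenberg--Zilber strategy, passing from the nerve of $G \times H$ to a bisimplicial object and then applying the algebraic Künneth formula. First, I would fix c-soft resolutions $\cS \to \cS^\bullet$ and $\cT \to \cT^\bullet$ in the categories of $G$- and $H$-equivariant sheaves respectively, and check that the exterior tensor product $\cS^\bullet \boxtimes \cT^\bullet$ (that is, the total complex of $p^*\cS^\bullet \otimes q^*\cT^\bullet$) is a resolution of $\cS \boxtimes \cT$ by c-soft $G \times H$-equivariant sheaves. The resolution property is verified stalkwise via the algebraic Künneth formula applied to the (free) stalks, and c-softness of the terms is the standard fact that the exterior tensor product of c-soft sheaves on locally compact Hausdorff spaces is again c-soft.

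Next, I would exploit the identification $(G\times H)^{(i)} = G^{(i)} \times H^{(i)}$, together with the natural isomorphism
\[
\Gamma_c(X \times Y, \cF \boxtimes \cG) \cong \Gamma_c(X,\cF) \otimes_\Z \Gamma_c(Y,\cG)
\]
valid for c-soft sheaves of abelian groups on locally compact spaces, to rewrite the Crainic--Moerdijk triple complex computing $H_\bullet(G \times H, \cS \boxtimes \cT)$ as the diagonal, in the simplicial index, of a quadruple complex assembled from the separate data of $G$ and $H$. The classical Eilenberg--Zilber shuffle maps then yield a natural chain homotopy equivalence between this diagonal and the total complex of the tensor product $C_\bullet(G,\cS) \otimes_\Z C_\bullet(H,\cT)$, where $C_\bullet(G, \cS) = \Tot(\Gamma_c(G^{(i)}, s^*\cS^j))$ and $C_\bullet(H, \cT) = \Tot(\Gamma_c(H^{(i)}, s^*\cT^j))$ compute $H_\bullet(G,\cS)$ and $H_\bullet(H,\cT)$ respectively.

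The final step is to apply the algebraic Künneth formula for chain complexes of abelian groups to $C_\bullet(G,\cS) \otimes_\Z C_\bullet(H,\cT)$, which produces the desired split short exact sequence. The main technical obstacle is the flatness hypothesis required by the algebraic Künneth formula: the groups $\Gamma_c(G^{(i)}, s^*\cS^j)$ are not automatically $\Z$-torsion-free. To circumvent this, I would replace $C_\bullet(G,\cS)$ by a quasi-isomorphic complex of free abelian groups via a Cartan--Eilenberg style free resolution of the complex before tensoring; since the Eilenberg--Zilber equivalence is natural, the identification of $H_\bullet(G\times H, \cS\boxtimes\cT)$ with the homology of the derived tensor product is preserved, and the splitting of the Künneth sequence is inherited from the algebraic splitting for complexes of free abelian groups. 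Keeping careful track of naturality through these replacements, so that the splitting in the final sequence really exists at the level of groupoid homology, is the subtlest part of the argument.
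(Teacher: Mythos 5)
Your proposal follows essentially the same route as the paper: choose c\-/soft resolutions, use $(G\times H)^{(i)}=G^{(i)}\times H^{(i)}$ together with the identification $\Gamma_c(X\times Y,\cF\boxtimes\cG)\cong\Gamma_c(X,\cF)\otimes\Gamma_c(Y,\cG)$ to exhibit the complex computing $H_\bullet(G\times H,\cS\boxtimes\cT)$ as the simplicial diagonal of a multicomplex built from the two factors, apply Eilenberg--Zilber to compare that diagonal with the tensor product of the total complexes $C_\bullet(G,\cS)\otimes C_\bullet(H,\cT)$, and conclude with the algebraic K\"unneth theorem. The paper performs the Eilenberg--Zilber comparison degreewise in the resolution direction and then totalizes via a comparison of regular spectral sequences, whereas you invoke the shuffle maps directly; that is an inessential difference.

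The genuine divergence is your treatment of flatness, and this is the one step that does not work as written. Replacing $C_\bullet(G,\cS)$ by a quasi-isomorphic complex of free abelian groups before tensoring computes the \emph{derived} tensor product of $C_\bullet(G,\cS)$ with $C_\bullet(H,\cT)$, while the Eilenberg--Zilber step identifies $H_\bullet(G\times H,\cS\boxtimes\cT)$ with the homology of the \emph{plain} tensor product; the assertion that these agree is precisely the flatness problem you set out to avoid, so the workaround is circular. The same flatness is in fact already needed upstream in your argument: without it the isomorphism $\Gamma_c(X\times Y,\cF\boxtimes\cG)\cong\Gamma_c(X,\cF)\otimes\Gamma_c(Y,\cG)$ can fail, and $\cS^\bullet\boxtimes\cT^\bullet$ need not be a resolution of $\cS\boxtimes\cT$ at all, since its stalkwise homology acquires $\Tor(\cS_x,\cT_y)$ contributions. (Some flatness hypothesis is genuinely unavoidable: for $G=H$ the one-point groupoid and $\cS=\cT$ the sheaf $\Z/2\Z$, the asserted sequence in degree $k=1$ would read $0\to 0\to 0\to\Tor(\Z/2\Z,\Z/2\Z)\to 0$.) The clean repair, implicit in the paper's choice of resolutions, is to arrange from the outset that the c\-/soft resolutions consist of flat sheaves --- for instance by resolving the coefficient sheaf by a two-step complex of sheaves with free stalks and then tensoring with a flat c\-/soft resolution of $\sfZ$ --- after which the groups $\Gamma_c(G^{(i)},s^*\cS^{j})$ are torsion-free, the algebraic K\"unneth theorem applies directly, and no replacement of complexes is needed.
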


\begin{proof}
Let us take bicomplexes $A=A_{a,i}$ and $B=B_{b,j}$ computing $H_\bullet(G, \cS)$ and $H_\bullet(H, \cT)$, respectively.
These are obtained from c-soft cohomological complex of sheaves $\tilde\cS^\bullet$ and $\tilde\cT^\bullet$, each quasi-isomorphic to $\cS$ and $\cT$ concentrated at degree $0$.
To obtain a homological complex we invert the degree, so that $A_{a, i} = \Gamma_c(G^{(a)}, \tilde\cS^{-i})$ for example.

Up to the identifications
\[
A_{k,i}\otimes B_{k,j} = \Gamma_c(G^{(k)}, \tilde\cS^{-i}) \otimes \Gamma_c(H^{(k)}, \tilde\cT^{-j}) \cong \Gamma_c((G\times H)^{(k)}, \tilde\cS^{-i} \boxtimes \tilde\cT^{-j}),
\]
the total complex of the triple complex $(A_{k,i} \otimes B_{k,j})$ computes $H_k(G \times H, \cS\boxtimes\cT)$.
The claim follows by a standard argument if we can show that this is quasi-isomorphic to the total complex of the quadruple complex $A \otimes B = (A_{a, i} \otimes B_{b,j})_{a,b,i,j}$.

Now, observe that $A\otimes B$ can be regarded as a bisimplicial object in the category of complexes, by totalizing in the $i$- and $j$-directions.
By an Eilenberg--Zilber type theorem \cite{goja:simp}*{Theorem IV.2.4}, for fixed $q$, the total complex of the bisimplicial group $C_q(a,b) = \bigoplus_{q = i + j} A_{a, i} \otimes B_{b,j}$ is chain homotopic to the Moore complex of the simplicial group $C'_q(k) = \bigoplus_{q = i + j} A_{k, i} \otimes B_{k,j}$.

Now, take double complexes $C_{k,q} = \bigoplus_{k = a + b} C_q(a,b)$ and $C'_{k,q} = C'_q(k)$.
Since the degree $k$ is concentrated in $k \ge 0$ while the degree $q$ is in $q \le 0$, the spectral sequences $E$ and $E'$ associated with filtration by $q$-degree on $\Tot C$ and $\Tot C'$ are regular, in the sense that for any $n$ there is $s(n)$ such that we have $E^r_{p, q} = 0$ for $p + q = n$, $p < s(n)$.
Then the spectral sequences converge to the total homologies, while we have the isomorphisms $E^r_{p,q} \cong E'^r_{p,q}$ for $r \ge 1$ by the above remark.
We thus obtain the assertion.
\end{proof}

As usual, the morphisms in the short exact sequence above are natural in any conceivable sense, however the splitting is not.

\begin{remark}
Matui's result \cite{MR3552533}*{Theorem 2.4} is a special case of the above result in the situation where the groupoids are totally
disconnected and the coefficients are locally constant sheaves $\sfZ$.
(Note that his convention of homology $H_n(G)$ differs from $H_n(G, \sfZ)$ unless $G$ is totally disconnected.)
\end{remark}

\begin{remark}\label{rem:idhomsmale}
In \cite{valmak:groupoidtwo}*{Theorem 5.1} we have identified Putnam's homology groups for Smale spaces \citelist{\cite{put:HoSmale}\cite{val:smale}} with the étale groupoid homology groups considered in this paper, where the groupoid is the unstable equivalence relation associated to a non-wandering Smale space with totally disconnected stable sets (this implies the associated groupoid is ample).
In a companion paper  \cite{valmak:threesmale} to the present one, we remove the hypothesis on the stable sets and prove the identification of homology groups for a general non-wandering Smale space.
Combining this result with Theorem \ref{thm:kunneth-formula} above, we obtain a general Künneth formula for the product of two Smale spaces and their homology groups as defined by Putnam, generalizing \cite{valmak:groupoidtwo}*{Theorem 5.2} and \cite{dkw:dyn}*{Theorem 6.5}.
\end{remark}

\section{Poincaré duality}
\label{sec:poincare-duality}

Suppose we have a locally compact principal groupoid $\tilde G$ such that the map $r \colon \tilde G \to \tilde{G}^{(0)}$ is a fiber bundle whose fibers are (homeomorphic to) $\R^n$.
We assume that there is a generalized transversal $T$ in $\tilde G^{(0)}$, so that $G = \tilde G|_T$ is an étale groupoid. In keeping with our previous installment \cite{valmak:groupoidtwo}*{Definition 2.3}, the notion of generalized transversal we use here is the one introduced in \cite{put:spiel}.
We also assume that groupoid homology for $G$ is definable.

\begin{example}
A motivating example is the unstable groupoid of a Smale space whose unstable classes are homeomorphic to $\R^n$.
In this case, we get the fiber bundle structure and (generalized) transversal from the bracket maps, cf.~\cite{MR1794291}.
\end{example}

Under our assumption, the structure map $s \colon \tilde G \to \tilde G^{(0)}$ is a model of the universal principal $\tilde G$-bundle $E \tilde G \to B \tilde G$.
Then the Baum--Connes conjecture suggests a close relation between $H_\bullet(G, \sfZ)$ and the compactly supported cohomology of
the space $\tilde G^{(0)}$.
Let us make this precise in the framework of groupoid homology.

Let us consider the orbit-wise orientation sheaf $\sfo$ on $\tilde G^{(0)}$.
Formally, its stalk at $x$ is given by
\[
\sfo_x = (\medwedge^n T_x \tilde G^x \setminus \{0\}) / \R_+ \cong \R^\times / \R_+ \cong \{1, -1\}.
\]
The étale space structure on the total space $\bigcup_{x \in G^{(0)}} \sfo_x$ is given by the local choice of orientations on the fibers $\tilde{G}^x$.
Namely, whenever $U \subset \tilde{G}^{(0)}$ is an open set such that $\tilde{G}^U$ is homeomorphic to $\R^n \times U$ as a space over $U$, this homeomorphism, together with the standard orientation on $\R^n$, defines points $\sigma_x \in \sfo_x$.
We take the subsets of the form $\{ \sigma_x : x \in U \}$ for such $U$ as a base of the topology on the total space.
Note that $\sfo$ admits a global section (equivalently, it is trivializable) if and only if there is a global orientation on the orbits of $\tilde G$.

By construction, this sheaf has a natural action of $\Z/2\Z$.
Moreover, when $F$ is a sheaf of commutative groups on $\tilde{G}^{(0)}$, we obtain another sheaf $F \times_{\Z/2\Z} \sfo$ of commutative groups, whose fibers are $F_x \times_{\Z/2\Z} \sfo_x$, endowed with the group structure $(a, \sigma) + (b, \sigma) = (a + b, \sigma)$.

\begin{theorem}\label{thm:poincare-duality}
Under the above setting, we have an isomorphism
\[
	H_k(G, \sfZ) \cong H_c^{n-k}(\tilde G^{(0)}, \sfZ \times_{\Z/2\Z} \sfo).
\]
\end{theorem}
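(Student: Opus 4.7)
The plan is to factor the computation through the orbit space $Q = \tilde G^{(0)}/\tilde G$, viewed as a (possibly non-Hausdorff) trivial groupoid, so that both sides of the claimed isomorphism reduce to a common object on $Q$: groupoid homology of $G$ should match $H^{\bullet}_c(Q,\sfZ)$ directly, while $H^{\bullet}_c(\tilde G^{(0)},\sfZ\times_{\Z/2\Z}\sfo)$ should match it after the $n$-fold Thom shift coming from the $\R^n$-bundle structure of $\tilde G^{(0)}\to Q$.

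For the left-hand side, I would consider the canonical groupoid homomorphism $\phi\colon G\to Q$ sending each arrow $g$ to the identity at the orbit of $s(g)=r(g)$. By \eqref{eq:n-th-derived-stalk}, the stalk $(L_a\phi_!\sfZ)_{[x]}$ is $H_a([x]/\phi,\sfZ)$, and unwinding the definition of the comma groupoid identifies $[x]/\phi$ with the restriction $G|_{T\cap[x]}$. Since $\tilde G$ is principal and $T$ is a generalized transversal to the $\R^n$-orbits, $T\cap[x]$ is a countable discrete subset of $[x]\cong\R^n$, so $\tilde G|_{T\cap[x]}$ is the pair groupoid on this set; by Morita invariance its homology with constant coefficients is $\sfZ$ in degree zero and vanishes otherwise. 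Hence $L_0\phi_!\sfZ=\sfZ_Q$ with higher derived functors vanishing, so the Grothendieck spectral sequence of $\phi$ collapses to $H_k(G,\sfZ)\cong H_k(Q,\sfZ)$; and direct inspection of the Crainic--Moerdijk double complex for a trivial groupoid on a space yields $H_k(Q,\sfZ)\cong H^{-k}_c(Q,\sfZ)$.

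For the right-hand side, I would use the quotient map $\pi\colon\tilde G^{(0)}\to Q$, which inherits an $\R^n$-bundle structure from the hypothesis on $r\colon\tilde G\to\tilde G^{(0)}$ together with principality, and note that $\sfo$ on $\tilde G^{(0)}$ is the pullback of the orientation sheaf $\sfo_Q$ of this bundle. The compactly supported Leray spectral sequence
\[
H^p_c(Q,R^q\pi_!(\sfZ\times_{\Z/2\Z}\sfo))\Rightarrow H^{p+q}_c(\tilde G^{(0)},\sfZ\times_{\Z/2\Z}\sfo)
\]
degenerates, as $R^q\pi_!$ vanishes for $q\neq n$ and $R^n\pi_!(\sfZ\times_{\Z/2\Z}\sfo)\cong\sfo_Q\otimes_{\Z/2\Z}\sfo_Q\cong\sfZ$ on $Q$. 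Setting $p=-k$ and combining with the computation above gives $H_k(G,\sfZ)\cong H^{n-k}_c(\tilde G^{(0)},\sfZ\times_{\Z/2\Z}\sfo)$.

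The main obstacle will be that $Q$ is typically non-Hausdorff, so neither the Leray spectral sequence for $\pi$ nor the identification $H_k(Q,\sfZ)\cong H^{-k}_c(Q,\sfZ)$ is literally meaningful as topology of a point-set space. I expect this to be handled by staying inside the equivariant-sheaf framework, replacing $Q$ by any Morita-equivalent étale groupoid (for instance $G$ itself) so that all spectral sequences live in the derived category of $\tilde G$-equivariant sheaves, and invoking the Morita invariance of hyperhomology recalled in Section \ref{sec:prelim}. Equivalently, one could produce directly on $\tilde G^{(0)}$ a $\tilde G$-equivariant quasi-isomorphism between $\sfZ$ and an $n$-fold shift of $\sfZ\times_{\Z/2\Z}\sfo$, upgrading the orbit-wise Poincaré duality for $\R^n$ to a global sheaf-theoretic statement, with the derived functor formalism of Section \ref{sec:der-func-formal} doing the packaging.
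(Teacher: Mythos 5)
Your fallback in the last paragraph --- work equivariantly and produce a quasi-isomorphism between $\cL\phi_!F$ and a shift of $\sfZ$ --- is exactly the paper's proof, so the overall strategy is sound. Concretely, the paper sets $E=\tilde G^T$, uses the range map to get $\phi\colon G\ltimes E\to G$, observes that $G\ltimes E$ is equivalent to the trivial groupoid $\tilde G^{(0)}$ so that $H_{p-n}(G\ltimes E,F)\cong H_c^{n-p}(\tilde G^{(0)},\sfZ\times_{\Z/2\Z}\sfo)$ for $F=s^*(\sfZ\times_{\Z/2\Z}\sfo)$, identifies the comma groupoids $x/\phi$ as Morita equivalent to $\tilde G^x\cong\R^n$ so that $L_q\phi_!F$ vanishes except in degree $-n$ where it is $\sfZ$ (the $\sfo$-twist in $F$ is precisely what makes this identification independent of the choice of orientation on $\tilde G^x$, hence canonical and $G$-equivariant), and then lets the degenerate Leray-type spectral sequence of Crainic--Moerdijk finish the job. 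This is your ``second half'', carried out upstairs over $G$ instead of downstairs over $Q$.

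The primary route as written, however, has a gap beyond the non-Hausdorffness of $Q$ that you flagged. In the first half you claim that $T\cap[x]$ is a countable \emph{discrete} subset of the orbit, so that $[x]/\phi$ is a pair groupoid on a discrete set and hence Morita-trivial. For the systems of interest (irrational rotations, Anosov diffeomorphisms) a transversal meets a single orbit in a countable set that is dense in itself --- homeomorphic to $\Q$, not to a discrete space --- and the restricted groupoid $G|_{T\cap[x]}$, an étale principal groupoid whose arrow space does not carry the product topology of $(T\cap[x])^2$, is not Morita equivalent to a point; its homology with constant coefficients is not $\Z$ concentrated in degree zero in any evident way. This is exactly why the paper never factors through the orbit space: the only comma groupoids it needs are those of $\phi\colon G\ltimes E\to G$, whose object spaces are the honest topological fibers $\tilde G^x\cong\R^n$. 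So your last paragraph should be regarded not as an optional repackaging but as the actual proof, with the two-step factorization through $Q$ discarded.
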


\begin{proof}
Consider the $G$-space $E = \tilde G^T$.
Then we have a morphism of groupoid
\[
\phi\colon G \ltimes E \to G
\]
induced by the range map $\tilde G^T \to T$.
We want to apply the constructions in \cite{cramo:hom}*{Section 4} to this setting.
We have an equivalence between $G \ltimes E$ and $\tilde{G}^{(0)}$, induced by the source map $s \colon E \to \tilde G^{(0)}$.
Let us consider the $(G \ltimes E)$-sheaf $F = s^*(\sfZ \times_{\Z/2\Z} \sfo)$.
This relates to the compactly supported cohomology in the claim by the Morita invariance of groupoid homology \cite{cramo:hom}*{Corollary 4.6} and the fact that groupoid homology of a space as a trivial groupoid is just the compactly supported sheaf cohomology up degree inversion, which gives
\begin{equation}\label{eq:morita-equiv-homology}
	H_{p-n}(G \ltimes E, F) \cong H_c^{n-p}(\tilde G^{(0)}, \sfZ \times_{\Z/2\Z} \sfo).
\end{equation}

We will compute the left hand side through its left derived functors $L_k \phi_!$ and get $H_k(G, \sfZ)$.
Recall that the stalks of $L_k \phi_! F$ can be computed as groupoid homology of the groupoid $x/ \phi$ using \eqref{eq:n-th-derived-stalk}.
By our assumption on $\tilde G$, the object space of $x / \phi$ can be identified with the disjoint union of $\tilde G^y$ for $y \in G_x$.
Given objects $g \in \tilde G^y$ and $g' \in \tilde G^z$ in $x / \phi$, there is an arrow from $g$ to $g'$ if and only if $g = g'' g'$ for the unique $g'' \in \tilde G^y_z$.
In particular, $x / \phi$ is Morita equivalent to the space $\tilde G^x \cong \R^n$.

If we restrict the pullback sheaf $\pi_x^* F$ to $\tilde G^x$, we get $s^*(\sfZ \times_{\Z/2\Z} \sfo)$.
Choosing a homeomorphism between $\tilde{G}^x$ and $\R^n$, this is isomorphic to $\sfZ$.
We then have\begin{equation}\label{eq:homol-over-x-mod-phi}
H_k(x/\phi, \pi_x^* F) \cong H_c^{-k}(\R^n, \sfZ) \cong
\begin{cases}
\Z & (k = -n)\\
0 & (\text{otherwise}),
\end{cases}
\end{equation}
where we used the standard orientation on $\R^n$ to get the second isomorphism.
Thus, the $G$-sheaf $L_k \phi_! F$ on $G^{(0)} = T$ has the stalks isomorphic to $\Z$ when $k = -n$, and we have $L_k \phi_! F = 0$ otherwise.
The next step is to check that $L_k \phi_! F$ is isomorphic to $\sfZ$ as a $G$-sheaf.

Since we already know the fibers to be isomorphic to $\Z$, it is enough to check that these isomorphisms can be chosen in a consistent way.
The isomorphism in~\eqref{eq:homol-over-x-mod-phi} was defined on a choice of orientation on $\tilde{G}^x$, or equivalently, a choice of element in the two-point set $\sfo_x$, but we claim that the overall isomorphism is independent of the choice.

Suppose we choose the other element of $\sfo_x$. On one hand, the isomorphism $\pi_x^* F \simeq \sfZ$ is replaced by the negative of the original one. On the other, the isomorphism $H_c^n(\R^n, \sfZ) \simeq \sfZ$ is also affected in the same way if we use an orientation on $\R^n$ different from the first one. Overall, these two changes cancel with each other and give what we wanted.

The $G$-invariance is also straightforward from this presentation.
When $g \in \tilde G^x_y$, the induced map $\tilde{G}^y \to \tilde{G}^x$ gives an isomorphism $H_k(y/\phi, \pi_y^* \sfZ) \to H_k(x/\phi, \pi_x^* \sfZ)$ that is identified with the identity map on $\Z$ up to~\eqref{eq:homol-over-x-mod-phi}.

Now, the Leray-type spectral sequence from \cite{cramo:hom}*{Theorem 4.4}
\[
E^2_{p q} = H_p(G, L_q \phi_! F) \Rightarrow H_{p + q}(G \ltimes E, F)
\]
is degenerate at the $E^2$-sheet for degree reasons.
Thus we get that $H_p(G, \sfZ)$ is isomorphic to $H_{p-n}(G \ltimes E, F)$.
Combined with \eqref{eq:morita-equiv-homology}, we obtain the assertion.
\end{proof}

\begin{remark}[cf.~\cite{MR1950475}*{Section III.5}]
We used the Leray-type spectral sequence in the above proof, but more conceptually, $\cL \phi_! F$ is quasi-isomorphic to the degree shift of $\sfZ$.
To be more precise, there is a zig-zag of quasi-isomorphisms of chain complexes of $G$-sheaves between $\cL \phi_! F$ and $\sfZ[n]$, where $\sfZ[n]$ is the chain complex that has a copy of $\sfZ$ at degree $-n$, and $0$ elsewhere.

In general, suppose that $\cA$ is an abelian category, and consider the category $\cD^-(\cA)$ of homological complexes with terms in $\cA$ that are bounded from below.
We are interested in the case where $\cA$ is the category of $G$-sheaves, hence $\cD^-(\cA)$ is where the left derived functors $\cL_\bullet \phi_!$ are defined.
When an object $X_\bullet$ in $\cD^-(\cA)$ has homology concentrated in degree $k$, we can take the truncation $\tau(X)_\bullet$ defined by
\begin{align*}
\tau(X)_m = \begin{cases}
X_m & (m > k),\\
Z_k(X_\bullet) & (m = k),\\
0 & (m < k).
\end{cases}
\end{align*}
Then the inclusion $\tau(X)_\bullet \to X_\bullet$ and the projection $\tau(X)_\bullet \to H_k(X_\bullet)[-k]$ are quasi\-/isomorphisms of complexes.
\end{remark}

\begin{remark}
Let $(X, \phi)$ be a non-wandering Smale space whose unstable sets are contractible.
In \cite{put:HoSmale}*{Question 8.3.2}, Putnam conjectured that the stable homology $H^s_\bullet(X, \phi)$ is isomorphic to $H^{\bullet}(X)$ up to a degree shift.
In view of Remark \ref{rem:idhomsmale}, Theorem \ref{thm:poincare-duality} answers this conjecture in affirmative if the unstable sets are homeomorphic to $\R^d$ and there is a consistent choice of orientation on these spaces.
As pointed out in \cite{MR3722566}, the original conjecture is false without such orientability, and Theorem \ref{thm:poincare-duality} provides a necessary modification for non-orientable cases.
\end{remark}

\subsection{Examples}

\subsubsection{Substitution tilings}

In the case of groupoids for substitution tilings, the isomorphism of Theorem \ref{thm:poincare-duality} appears in \cite{valmak:groupoid}*{Section 5.2}.

\subsubsection{Anosov diffeomorphisms}

Let $\phi$ be an Anosov diffeomorphism $\phi$ of a compact manifold $X$, with dimension of stable sets $n$.
By the stable manifold theorem \cite{MR0271991} (see also \cite{MR1963683}*{Section 5.6}), the stable set of any point $x \in X$ is an immersed copy of $\R^n$.
Then the monodromy groupoid of stable foliation on $X$ satisfies the assumption for $\tilde G$.

When $X$ agrees with the set of the non-wandering points of $\phi$, we have a non-wandering Smale space $(X, \phi)$, and the above monodromy groupoid is just $R^s(X, \phi)$.

\begin{example}\label{exa:rotalg}
As a concrete example, let us consider the Smale spaces associated to hyperbolic toral automorphisms $(X,\phi)=(\R^2/\Z^2,A)$, where $A$ is a $2$-by-$2$ matrix with integer entries and determinant equal to $1$ (see \cite{put:HoSmale}*{Section 7.4}).
This implies the $\R$-linear endomorphism associated to $A$ descends to a map of the $2$-torus.
Note that $A$ is called ``hyperbolic'' when its eigenvalues $\lambda_1,\lambda_2$ satisfy $\lambda_1<1$, $\lambda_2>1$.

The stable and unstable orbits of $A$ coincide with the lines spanned by the eigenvectors associated to $\lambda_1$ and $\lambda_2$.
Denote by $R^s(X,\phi)$ the stable equivalence relation associated to $(X,\phi)$, suitably reduced to a transversal so that $R^s(X,\phi)$ is étale (in this case a transversal is given for example by the $\lambda_2$-eigenline).
Applying Theorem \ref{thm:poincare-duality}, we obtain 
\begin{equation}\label{eq:homirr}
H_{-1}(R^s(X,\phi))\cong \Z,\qquad H_{0}(R^s(X,\phi))\cong \Z^2,\qquad H_{1}(R^s(X,\phi))\cong \Z.
\end{equation}
Even though the HK conjecture is only formulated for ample groupoids, it is worth pointing out that this homology calculation corresponds (after periodicization) to the $K$-groups of the stable C$^*$-algebra associated to $(X,\phi)$.
Indeed, this algebra is the foliation C$^*$-algebra of the Kronecker flow along the $\lambda_1$-eigenline, hence it is Morita equivalent to the rotation algebra with angle the slope of the eigenline.
The $K$-groups of this algebra is well-known to be $\Z^2$ in both even and odd degree.

The Morita equivalence arises from an equivalence of groupoids as explained in detail in \cite{put:spiel}*{Chapter 3}.
Since the homology groups are Morita invariant \cite{cramo:hom}*{Section 4}, the calculation in \eqref{eq:homirr} is valid for the topological groupoid $\Z\ltimes S^1$ underlying the irrational rotation algebra with angle the slope of the $\lambda_1$-eigenline.
\end{example}

\begin{example}\label{ex:infra-nilmanifold}
Let $M = L / \Gamma$ be an infra-nilmanifold, i.e., a quotient of a nilpotent, connected, and simply connected Lie group $L$ by a torsion-free group $\Gamma$ of affine automorphisms of $L$ such that $\Lambda = L \cap \Gamma$ is a finite index subgroup of $\Gamma$.
Moreover, let $\psi$ be a hyperbolic affine automorphism of $M$ \cite{MR2808264}.
Then $R^u(M, \psi)$ and $R^s(M,\psi)$ satisfy the assumption of Theorem \ref{thm:poincare-duality}.
Indeed, an unstable set of $(M,\psi)$ can be identified with the subspace of the Lie algebra $\mathfrak{l}$ of $L$ spanned by eigenvectors of the ``linear part'' of $\psi$ for corresponding to its eigenvalues bigger than $1$, while a stable set can be identified with the span of the other eigenvectors.
\end{example}

\begin{remark}
If we denote by $S$ the $C^*$-algebra of the stable equivalence relation, Takai \cite{takai:ano} has conjectured that $K_i(S)$ is isomorphic to $K^{i+n}(X)$, which can be understood as an instance of the Baum--Connes conjecture for foliations (for general formulations of the  Baum--Connes conjecture for groupoids see for example \citelist{\cite{val:kthpgrp}\cite{tu:moy}}).
Theorem \ref{thm:poincare-duality} gives an affirmative answer to the homological version of the Takai conjecture.
\end{remark}

\subsubsection{Self-similar action}
\label{sec:self-similar-action}

Another class of examples comes from the theory of self-similar actions, which is also closely related to Example \ref{ex:infra-nilmanifold}.

Let $\Gamma$ be a finitely generated group, and $\phi$ be an injective and surjective contracting virtual endomorphism of $\Gamma$.
Then $\Gamma$ is virtually nilpotent, and admits a self-similar action $(\Gamma, X)$ where the alphabet set $X$ is a system of representatives of $\Gamma / \dom \phi$.
Its \emph{limit $\Gamma$-space} $\cX_{\Gamma, X}$ can be identified with a nilpotent connected and simply connected Lie group $L$ on which $\Gamma$ acts by affine transformations in a proper and cocompact way \cite{MR2162164}*{Section 6.1}.

In general, when $(\Gamma, X)$ is a contracting, recurrent, and regular self-similar action, we have the associated Smale space $\cS_{\Gamma, X}$  (the \emph{limit solenoid} of $(\Gamma, X)$), and its unstable sets can be identified with $\cX_{\Gamma, X}$, see \cite{MR2526786}.
Thus, under the above assumption on $(\Gamma, \phi)$, the unstable groupoid $G = R^u(\cS_{\Gamma, X})$ satisfies the assumption of Theorem \ref{thm:poincare-duality}.

In the next section we look at examples from compact Riemannian manifolds that fall in this setting.

\section{Expanding maps on compact manifolds}

Let us combine the Poincaré duality and transfer maps in homology to obtain a more elaborate computation of groupoid homology.

Suppose that $M$ is an $n$-dimensional connected compact Riemannian manifold and $g \colon M \to M$ is an expanding map.
Then $g$ admits a fixed point $x$, $\Gamma = \pi_1(M, x)$ is a torsion-free group of polynomial growth (hence virtually nilpotent), and $\R^n$ is a universal cover for $M$, see \cite{MR2162164}*{Section 6.1}.

With the virtual endomorphism $\phi$ represented by $g^{-1}$, we are in the setting of Section \ref{sec:self-similar-action}.
Then the Smale space $\cS_{\Gamma, X}$ is given by $Y = \varprojlim_g M$ and the associated self homeomorphism $\phi$ of $Y$.
Again the groupoid $\tilde G = R^u(Y, \phi)$ is equivalent to the étale groupoid $G = \Gamma \ltimes \Omega$ where $\Omega = \varprojlim \Gamma / g^i(\Gamma)$, and we have
\[
H_k(G, \sfZ) \cong H^{n-k}(Y,\sfZ \times_{\Z/2\Z} \sfo).
\]
Let us write this as a group cohomology with coefficient.

Since $G$ is a transformation groupoid, we also have
\[
H_k(G, \sfZ) \cong H_k(\Gamma, C(\Omega, \Z))
\]
with respect to the induced action of $\Gamma$ on $C(\Omega, \Z)$.
As $M$ is a model of the Eilenberg--MacLane space $K(\Gamma, 1)$, $\Gamma$ is a Poincaré duality group \cite{MR1324339}*{Section VIII.10}, and we have
\[
H_k(\Gamma, C(\Omega, \Z)) \cong H^{n-k}(\Gamma, C(\Omega, \Z) \otimes D)
\]
where $D$ is an infinite cyclic group with the ``sign'' representation of $\Gamma$.

Again the equivalence between $\Gamma \ltimes (\Gamma / g^i (\Gamma))$ and $g^i(\Gamma)$ leads to a presentation of these (co)homology groups as inductive limits of the groups $H_k(g^i(\Gamma), \Z) \cong H_k(\Gamma, \Z)$ with connecting maps given by the transfer maps.
This corresponds to the isomorphism
\[
K_\bullet(\Gamma \ltimes C(\Omega)) \cong \varinjlim_i K_\bullet(C^* g^i(\Gamma))
\]
that follows from the Baum--Connes conjecture for coefficients for $\Gamma$, see also \citelist{\cite{deemany:flat}\cite{deeley:hk}} for the case of flat manifolds.

\subsection{Klein bottle}

Let us describe a concrete example in the class of Section \ref{sec:self-similar-action} that arises from a non-orientable surface.
Consider the group
\[
\Gamma =\langle a,b \mid b^{-1}ab=a^{-1}\rangle,
\]
and its action on $\R^2$ given by $a(x,y)=(x+1,y)$, $b(x,y)=(-x,y+1)$.
Let $K$ be the orbit space of this action, which is a model of the Klein bottle space.
Since $\R^2$ is the universal cover of $K$, we have $\Gamma \cong \pi_1(K, [0])$, and $\R^2$ is a model of $E \Gamma$.

Let $g$ be the uniform scaling on $\R^2$ given by $g(x, y) = (3 x, 3 y)$.
This induces to an \emph{expanding endomorphism} of $K$ that fixes $[0]$, which we denote again by $g$.
The associated endomorphism of $\Gamma$ can be written as $g_*(a) = a^3$, $g_*(b) = b^3$ up to the above identification, and the virtual endomorphism $\phi$ represented by $g^{-1}\colon g(\Gamma) \to \Gamma$ satisfies the assumptions of Section \ref{sec:self-similar-action}.

Then, $K$ can be identified with the \emph{limit space} $\cJ_{\Gamma, X}$ of the associated self similar action.
Thus, the associated limit solenoid $\cS_{\Gamma, X}$ is $Y = \varprojlim_g K$, with the induced self-homeomorphism again denoted by $\phi$.
Moreover, the action of $\Gamma$ on the nilpotent Lie group $L$, which appears as the limit $\Gamma$-space as above, is conjugate to the above action of $\Gamma$ on $\R^2$ by the universality of $\R^2$ as the total space of the classifying space for principal $\Gamma$-bundles.

Let us write the coset spaces for the image of powers of $g$ as $\Omega_i= \Gamma / g_*^i(\Gamma)$.
We have natural projection maps $\Omega_{i+1}\to \Omega_i$, and their projective limit $\Omega = \varprojlim_i \Omega_i$.
Then $Y$ can be identified with the homotopy quotient $\Omega \times_K \R^2$.
Thus, the groupoid $\tilde G = R^u(Y, \phi)$ can be identified with the transformation groupoid of $\R^2$ acting on $Y$ by translation.
Taking the image $T$ of $\Omega \times \{(0, 0)\}$ as the transversal, the associated étale groupoid  $G = \tilde G|_T$ is the transformation groupoid of the canonical action of $\Gamma$ on $\Omega$.

Turning to the groupoid homology, at degree $k = 0$ we have
\[
H_0(G, \sfZ) \cong \Z[1/9] = \Z[1/3]
\]
by \cite{valmak:groupoidtwo}*{Theorem 4.1 and Proposition 6.3}.
More generally, as remarked in \cite{valmak:groupoidtwo}*{Section 6.2}, $H_k(G, \sfZ)$ is the direct limit of group homology groups $H_k(\Gamma, \Z)$ where the connecting map is induced by the equivalence between $\Gamma \ltimes \Omega_i$ and $g^i(\Gamma) \cong \Gamma$.
Concretely, these maps are the \emph{transfer maps} of group homology,
\[
H_k(\Gamma, \Z) \to H_k(g^i(\Gamma), \Z),
\]
see \cite{MR1324339}*{Section III.9}.

At degree $k = 1$, let us write
\begin{align*}
H_1(\Gamma, \Z) &= \Gamma^{\ab} \cong \Z \oplus \Z/2\Z,& 
H_1(g(\Gamma), \Z) = g(\Gamma)^{\ab} \cong 3 \Z \oplus \Z/2\Z.
\end{align*}
where the images of $a$ and $g(a)$ become the generator of the summand $\Z/2\Z$, and those of $b$ and $g(b)$ become generators of $\Z$ and $3 \Z$ respectively.
The transfer map is given by
\begin{align*}
H_1(\Gamma, \Z) &\to H_1(g(\Gamma), \Z),&
[a] & \mapsto [g(a)],&
[b] & \mapsto 3 [g(b)],
\end{align*}
see for example \cite{MR1324339}*{Exercise III.9.2}.

Thus, the inductive system $(H_1(g^i(\Gamma), \Z))_i$ can be identified with the constant system $\Z \oplus \Z/2\Z$ whose connecting map is given by $(x, y) \mapsto (3 x, y)$.
In particular we obtain
\[
H_1(G, \sfZ) \cong \Z[1/3] \oplus \Z/2\Z.
\]

For $k \ge 2$, we have $H_k(G, \sfZ) = 0$.
One way to see this is to use the isomorphism
\[
H_k(G, \sfZ) \cong H^{2-k}(Y, \sfZ \times_{\Z/2\Z} \sfo)
\]
given by Theorem \ref{thm:poincare-duality}.
This forces $H_k(G, \sfZ) = 0$ for $k > 2$ by degree reasons, and at $k = 2$ we have
\[
H^0(Y, \sfZ \times_{\Z/2\Z} \sfo) = \Gamma(Y, \sfZ \times_{\Z/2\Z} \sfo) = 0
\]
because there is no global orientation on the orbits of $\tilde G$.
(Alternatively, one can also use the duality between group homology and cohomology for $\Gamma$ to directly check $H_k(\Gamma, \Z) = 0$, see \cite{MR1324339}*{Section VIII.10}.)

\section{Number theoretic generalization of solenoid}\label{sec:ntsol}

We have seen  an example in the previous section where our duality theorem has a fairly straightforward application.
Here we consider a slightly more complicated system, obtained by generalizing the notion of solenoid.
Although Theorem \ref{thm:poincare-duality} cannot be applied directly, the argument in our proof of Poincaré duality will be useful in this setting too, leading us to a statement which is somewhat analogous to the Thom isomorphism.

Let us consider the following class of Smale spaces from \cite{MR1345152}*{Section 7}.
Let $c \in \bar \Q \subset \bC$ be an algebraic number such that $\absv{g c} \neq 1$ for any element $g$ in the absolute Galois group $\Gal(\bar \Q / \Q)$.
Consider the algebraic number field $K = \Q(c)$ generated by $c$.
For each place $v$ of $K$, let us fix an absolute value function $\absv{a}_v$ representing $v$.
We denote the set of finite and infinite places of $K$ by $P^K_f$ and $P^K_\infty$ respectively.

Now, set
\[
P(c) = P^K_\infty \cup \{v \in P^K_f \mid \absv{c}_v \neq 1 \}.
\]
(By our assumption each $v \in P^K_\infty$ satisfies $\absv{c}_v \neq 1$.)
Then the ring
\[
R_c = \{ a \in K \mid \forall v \in P^K_f \smallsetminus P(c) \colon \absv{a}_v \le 1 \}
\]
is a cocompact subring of the direct product of local fields $\prod_{v \in P(c)} K_v$, and the quotient
\begin{equation}\label{eq:ycdual}
Y^{(c)} = \biggl(\prod_{v \in P(c)} K_v\biggr) / R_c
\end{equation}
with respect to the translation action can be identified with the Pontryagin dual of the additive group of $R_c$.

\begin{remark}
To the best of our knowledge, there are some gaps in the proof of this duality presented in \cite{MR1345152}.
An alternative argument is sketched in \cite{ejlo:balian}*{Proposition~6.8}, and we also provide a shorter proof in Appendix \ref{sec:app}.
\end{remark}

The periodic points of the self-homeomorphism
\[
\phi \colon Y^{(c)} \to Y^{(c)}, \quad [a_v]_{v \in P(c)} \mapsto [c a_v]_v
\]
form a dense subset of $Y^{(c)}$ \cite{MR1345152}*{Section 5}, and the $K_v$-direction is contracting (resp.~expanding) for $\phi$ if and only if $\absv{c}_v < 1$ (resp.~$\absv{c}_v > 1$).
Thus, the system $(Y^{(c)}, \phi)$ is a non-wandering Smale space.

Consider the subsets
\[
P^s(c) = \{ v \in P(c) \mid \absv{c}_v < 1 \},\quad
P^u(c) = \{ v \in P(c) \mid \absv{c}_v > 1 \}
\]
of $P(c)$.
Then the stable equivalence class of each point of $Y^{(c)}$ is identified with $\prod_{v \in P^s(c)} K_v$.
Thus, the unstable groupoid of $(Y^{(c)}, \phi)$ is equivalent to the transformation groupoid $R_c \ltimes \prod_{v \in P^s(c)} K_v$.
Similarly, the stable groupoid is equivalent to $R_c \ltimes \prod_{v \in P^u(c)} K_v$.

\subsection{Groupoid homology}

Let us compute the homology of the above groupoids.
In the following, when omitted, the coefficient sheaf for homology is $\sfZ$.

\begin{proposition}\label{prop:poincare-type-iso-for-alg-action}
Let $P^s_f(c) = P^s(c) \cap P^K_f$, and $d = \sum_{v \in P^s(c) \cap P^K_\infty} \dim_\R K_v$.
We then have
\[
H_p\biggl( R_c \ltimes \prod_{v \in P^s(c)} K_v \biggr) \cong H_{p + d}\biggl( R_c \ltimes \prod_{v \in P^s_f(c)} K_v \biggr).
\]
\end{proposition}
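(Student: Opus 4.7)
The plan is to adapt the derived functor argument from the proof of Theorem \ref{thm:poincare-duality}, exploiting the natural product decomposition at infinite versus finite places.

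First I would write $\prod_{v \in P^s(c)} K_v \cong \R^d \times Y$ with $Y = \prod_{v \in P^s_f(c)} K_v$ and $\R^d = \prod_{v \in P^s(c) \cap P^K_\infty} K_v$. The $R_c$-action on the total product respects this decomposition (acting by translation on each factor via the corresponding embeddings), so the projection onto $Y$ is $R_c$-equivariant and yields a morphism of étale groupoids
\[
\phi\colon \tilde G = R_c \ltimes (\R^d \times Y) \to G = R_c \ltimes Y.
\]
I would then compute the left derived functors $L_q \phi_! \sfZ$ via \eqref{eq:n-th-derived-stalk}. Unpacking the definition of the comma groupoid $y/\phi$ for $y \in Y$, the objects can be parametrized by $\R^d \times R_c$, and the (necessarily unique) arrow between $(x_1, r_1)$ and $(x_2, r_2)$ exists precisely when $x_2 - x_1 = e_\infty(r_2 - r_1)$, where $e_\infty\colon R_c \to \R^d$ denotes the product of the archimedean embeddings at places in $P^s(c)$. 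The map $(x, r) \mapsto x - e_\infty(r)$ therefore exhibits $y/\phi$ as Morita equivalent to $\R^d$ viewed as a trivial groupoid, and hence
\[
(L_q \phi_! \sfZ)_y \cong H_q(\R^d, \sfZ) \cong H_c^{-q}(\R^d, \Z),
\]
which is $\Z$ for $q = -d$ and vanishes otherwise.

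Next, I would show that $L_{-d} \phi_! \sfZ$ is canonically isomorphic to the constant sheaf $\sfZ$ as a $G$-sheaf. Mimicking the orientation analysis used in the proof of Theorem \ref{thm:poincare-duality}, the $G$-action on a stalk $(L_{-d} \phi_! \sfZ)_y$ corresponds, under the Morita equivalence, to translation of $\R^d$ by elements of $e_\infty(R_c)$. Since translation is orientation-preserving, no sign twist arises. This additive nature of the $R_c$-action is precisely what distinguishes this setting from the general one in Theorem \ref{thm:poincare-duality} and accounts for the absence of an orientation sheaf in the statement; the result becomes a ``Thom-type'' shift rather than a genuine Poincaré duality.

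Finally, I would plug this into the Leray-type spectral sequence from \cite{cramo:hom}*{Theorem 4.4},
\[
E^2_{p,q} = H_p(G, L_q \phi_! \sfZ) \Rightarrow H_{p+q}(\tilde G, \sfZ).
\]
Concentration of $L_q \phi_! \sfZ$ at $q = -d$ with value $\sfZ$ forces degeneration on the $E^2$-page, giving $H_p(G, \sfZ) \cong H_{p-d}(\tilde G, \sfZ)$, which reindexes to the claimed isomorphism. The main delicate step is verifying the $G$-equivariant identification in the previous paragraph; the remaining steps are essentially routine applications of the machinery already developed.
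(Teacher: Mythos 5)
Your proposal is correct and follows essentially the same route as the paper: the paper likewise takes the projection onto the finite-place factors, identifies the fibers (comma groupoids) with $\R^d$ so that $L_q\pi_!\sfZ$ is concentrated in degree $-d$ with value $\sfZ$ (the translation action being orientation-preserving), and concludes via degeneration of the Leray-type spectral sequence. Your explicit parametrization of the comma groupoid and the check of $G$-equivariance simply spell out details the paper leaves implicit.
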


\begin{proof}
As in the proof of Theorem \ref{thm:poincare-duality}, consider the groupoid homomorphism
\[
\pi\colon R_c \ltimes \prod_{v \in P^s(c)} K_v \to R_c \ltimes \prod_{v \in P^s_f(c)} K_v 
\]
induced by the projection of base space to the factors labeled by $P^s_f(c)$.
The fibers of this homomorphism can be identified with $\R^d$, and the action of $R_c$ preserves orientation.
We thus have $L_{-d} \pi_! \sfZ \cong \sfZ$ and $L_q \pi_! \sfZ = 0$ for $q \neq -d$ on $ \prod_{v \in P^s_f(c)} K_v$.
Then the Leray-type spectral sequence (see \cite{cramo:hom}*{Theorem 4.4})
\[
E^2_{p q} = H_p\biggl( R_c \ltimes \prod_{v \in P^s_f(c)} K_v, L_q \pi_! \sfZ \biggr) \Rightarrow H_{p+q}\biggl( R_c \ltimes \prod_{v \in P^s(c)} K_v, \sfZ \biggr)
\]
is degenerate at the $E^2$-sheet for degree reasons, and we obtain the claim.
\end{proof}


\smallskip
For each $v \in P^s_f(c)$, let $O_v \subset K_v$ denote the corresponding local ring, and $\pi_v \in O_v$ be a generator of its maximal ideal.
Moreover take $m_v \in \N_{>0}$ such that $\absv{c}_v = \absv{\pi_v}_v^{m_v}$, and set
\[
X^{(n)} = \prod_{v \in P^s_f(c)} K_v / \pi_v^{n m_v} O_v.
\]
We have a projective system of the discrete $R_c$-spaces $X^{(n)}$ with proper connecting maps, and
\begin{equation}\label{eq:prlimsol}
R_c \ltimes \prod_{v \in P^s_f(c)} K_v = \varprojlim_n R_c \ltimes X^{(n)}.
\end{equation}
Now, $X^{(n)}$ is a transitive $R_c$-set with stabilizer
\[
\Gamma_n = \{a \in R_c \mid \forall v \in P^s_f(c) \colon \absv{a}_v \le \absv{c}_v^{n} \}.
\]
We thus have $H_\bullet(R_c \ltimes X^{(n)}) \cong H_\bullet(\Gamma_n)$, and
\[
H_k\biggl( R_c \ltimes \prod_{v \in P^s_f(c)} K_v \biggr) \cong \varinjlim_n H_k(R_c \ltimes X^{(n)})
\]
can be identified with the inductive limit of the homology groups $H_k(\Gamma_n)$ with respect to the transfer maps $\theta\colon H_k(\Gamma_n) \to H_k(\Gamma_{n + 1})$ associated with the finite index inclusion $\Gamma_{n + 1} < \Gamma_n$ (see \cite{MR1324339}*{Chapter III, Section 9}).

The transfer map can be made more explicit as follows.
Let us denote by $m_{c^{-1}}$ the isomorphism $\Gamma_{n + 1} \to \Gamma_n$ given by multiplication by $c^{-1}$.

\begin{theorem}\label{thm:solstat}
Let $N = [\Gamma_0 \colon \Gamma_1]$, and $\theta'$ be the endomorphism of $\medwedge^k \Gamma_0$ uniquely extending $N \medwedge^k m_{c^{-1}} \colon \medwedge^k \Gamma_1 \to \medwedge^k \Gamma_0$.
The groupoid homology $H_k( R_c \ltimes \prod_{v \in P^s_f(c)} K_v )$ is isomorphic to the inductive limit $\varinjlim \medwedge^k \Gamma_0$ for the connecting maps $\theta'$.
\end{theorem}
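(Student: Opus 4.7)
The starting point is the identification, established in the paragraph preceding the theorem, of the left-hand side with the inductive limit $\varinjlim_n H_k(\Gamma_n, \Z)$ whose connecting maps are the transfer maps $\theta_n \colon H_k(\Gamma_n) \to H_k(\Gamma_{n+1})$ associated to the finite index inclusions $\Gamma_{n+1} \hookrightarrow \Gamma_n$.

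First I would note that each $\Gamma_n$ is a torsion-free abelian group (as a subgroup of the additive group of $K$), and that $\Gamma_n = c^n \Gamma_0$ for all $n \ge 0$ by a direct verification using the definitions of $R_c$ and $\Gamma_n$. Hence multiplication by $c^{-n}$ yields an isomorphism $m_{c^{-n}} \colon \Gamma_n \xrightarrow{\sim} \Gamma_0$, and the index $[\Gamma_n : \Gamma_{n+1}] = N$ is independent of $n$. Using the standard identification $H_k(A, \Z) \cong \medwedge^k A$ valid for any torsion-free abelian group $A$ (obtained by writing $A$ as a filtered colimit of its finitely generated, hence free, subgroups), the isomorphisms $\medwedge^k m_{c^{-n}}$ identify the inductive system $(H_k(\Gamma_n, \Z))_n$ with a constant system on $\medwedge^k \Gamma_0$.

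The heart of the proof is computing the connecting map in this presentation. Under the identifications above, the inclusion $\Gamma_{n+1} \hookrightarrow \Gamma_n$ becomes the multiplication map $m_c \colon \Gamma_0 \to \Gamma_0$, so $i_*$ becomes $\medwedge^k m_c$. The defining relation $i_* \circ \theta_n = N \cdot \id$ for the algebraic transfer (see \cite{MR1324339}*{Chapter III, Section 9}) thus translates to $(\medwedge^k m_c) \circ \tilde\theta_n = N \cdot \id$, where $\tilde\theta_n$ denotes the identified map. On the other hand, the candidate $\theta'$ satisfies $\theta' \circ \medwedge^k m_c = N \cdot \id$ by direct computation: the image of $\medwedge^k m_c$ lies in the subgroup $\medwedge^k \Gamma_1 \subseteq \medwedge^k \Gamma_0$, on which $\theta'$ acts as $N \medwedge^k m_{c^{-1}}$ by definition, and the identity $m_{c^{-1}} \circ m_c = \id_{\Gamma_0}$ gives the result.

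Since $\medwedge^k m_c$ is a rational isomorphism of $\medwedge^k \Gamma_0$ (as $m_c$ becomes bijective after tensoring with $\Q$), each of these one-sided relations forces the respective map to equal $N (\medwedge^k m_c)^{-1}$ as rational endomorphisms. Being integer-valued by construction, $\tilde\theta_n$ and $\theta'$ must therefore coincide for every $n$, and passing to the inductive limit yields the claim (along the way, the existence of the extension $\theta'$ is witnessed by the existence of the transfer, and its uniqueness follows from $\medwedge^k \Gamma_1$ having finite index in the torsion-free group $\medwedge^k \Gamma_0$). The main technical obstacle I anticipate is the careful bookkeeping needed to verify that the identification squares commute, though this is essentially forced by the relation $\Gamma_n = c^n \Gamma_0$.
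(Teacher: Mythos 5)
Your argument is correct and follows essentially the same route as the paper: identify $H_k(\Gamma_n,\Z)$ with $\medwedge^k\Gamma_n$, pin down the transfer by a composition identity with the inclusion equal to $N\cdot\mathrm{id}$ together with uniqueness of extensions over finite-index inclusions of torsion-free groups, and transport everything to $\medwedge^k\Gamma_0$ via $m_{c^{-n}}$. The only cosmetic difference is that you invoke $i_*\circ\theta_n=N\cdot\mathrm{id}$ and a short rationalization argument, whereas the paper uses the other composite $\theta_n\circ i_*=N\cdot\mathrm{id}$ on $H_k(\Gamma_{n+1})$ (valid here because the groups are abelian, so conjugation acts trivially) and reads off the extension directly.
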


\begin{proof}
First, since $\Gamma_n$ is a torsion-free commutative group, its integral homology $H_\bullet(\Gamma_n, \Z)$ is naturally isomorphic to the exterior algebra $\medwedge^\bullet \Gamma_n$ generated by $\Gamma_n$, see \cite{MR1324339}*{Theorem V.6.4}.
Second, the restriction of $\theta$ to $H_k(\Gamma_{n+1})$ is the multiplication map by $N$ (this is easy to see by comparison with singular homology, see for example \cite{hatcher:alg}*{Section 3G}).
Moreover, $\medwedge^k \Gamma_n$ is torsion-free and $\medwedge^k \Gamma_{n + 1}$ is its finite index subgroup.
Thus the transfer $\medwedge^k \Gamma_n \to \medwedge^k \Gamma_{n + 1}$ is the unique extension of $N \id$ on $\medwedge^k \Gamma_{n + 1}$.
Then the composition of transfer $\medwedge^k \Gamma_0 \to \medwedge^k \Gamma_1$ and the induced isomorphism $\medwedge^k m_{c^{-1}}$ gives $\theta'$, hence we obtain the claim.
\end{proof}

Next let us consider the automorphism of groupoid homology induced by the homeomorphism $\phi$.
As the basepoint we can take $(0)_v \in Y^{(c)}$, which is fixed by $\phi$.
This choice of point is convenient because the action of $\phi$ preserves its stable equivalence class, which can be identified with the multiplication by $c$ on $\prod_{v \in P^s(c)} K_v$, and the groupoid automorphism of $R_c \ltimes \prod_{v \in P^s(c)} K_v$ again given by multiplication by $c$ on all factors.

Note that the isomorphism of Proposition \ref{prop:poincare-type-iso-for-alg-action} is compatible with the automorphism induced by $\phi$, and the analogous one on $R_c \ltimes \prod_{v \in P^s_f(c)} K_v$, up to multiplying by $-\id$ certain factors of $K_v$ when $K_v \cong \R$.
This is because multiplication by $c$ on $K_v$, for Archimedean $v$, is properly homotopic to the identity map $\id$, or possibly to $-\id$ when $K_v \cong \R$.

\begin{remark}
In the above argument we used the invariance of groupoid homology under proper homotopy, which can be obtained by considering the following chain of isomorphisms: 
\[ H_k( R_c \ltimes \prod K_v,\sfZ )\cong H_k(R_c,C( \prod K_v ,\Z))\cong H_k(B R_c,C( \prod K_v ,\Z)),
\]
where we have group homology in the middle, and the last group is the homology of the classifying space with coefficient the local system induced by $\prod K_v$.
See also \cite{kascha:shman}*{Proposition 2.7.5} for a proof of homotopy invariance in the setting of sheaves.
\end{remark}

\bigskip
In  \cite{put:HoSmale}*{Theorem 6.1.1}, Putnam gave a remarkable analogue of Lefschetz formula for non-wandering Smale spaces $(X, \phi)$, which gives the number of periodic points as
\[
\absv{\{ x \in X \mid \phi^n(x) = x \}} = \sum_k (-1)^k \Tr_{H^s_k(X, \phi) \otimes \Q}((\phi^{-n})_*)
\]
using the transformation on his stable homology $H^s_k(X, \phi)$ induced by $\phi^{-n}$.

For the Smale spaces $(Y^{(c)}, \phi)$, we can make both sides more explicit.

\begin{proposition}
For $n \ge 1$, the fixed points for $\phi^n$ on $Y^{(c)}$ are bijectively parameterized by 
\begin{equation*}
(c^n- 1)^{-1} \cO_K / (\cO_K \cap (c^n - 1)^{-1} \cO_K ).
\end{equation*}
\end{proposition}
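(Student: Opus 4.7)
The approach is to solve the fixed-point equation directly, reduce it to a quotient of $R_c$, and reconcile that quotient with the expression in $\cO_K$ appearing in the statement via a local computation.

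First, a class $[a] \in Y^{(c)}$ with lift $a = (a_v)_{v \in P(c)}$ is $\phi^n$-fixed precisely when $(c^n - 1) a \in R_c$, viewing $R_c$ diagonally in $\prod_{v \in P(c)} K_v$. The hypothesis on $c$ gives $\absv{c}_v \neq 1$ at every Archimedean place, so $c^n \neq 1$ in $K$ and hence $c^n - 1$ is a unit in $K$ and in every $K_v$. Every lift of a fixed point therefore has the form $(c^n-1)^{-1} r$ with $r \in R_c$ (viewed through the diagonal $K \hookrightarrow \prod K_v$), and the set of fixed points bijects with $(c^n-1)^{-1} R_c / R_c$. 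Multiplication by $c^n - 1$ identifies this with $R_c / (c^n-1) R_c$.

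Second, I would analyze both sides place-by-place. Since $R_c$ is the localization of $\cO_K$ obtained by inverting every prime $\frp$ with $\absv{c}_{\frp} \neq 1$, its maximal ideals correspond bijectively to the primes $v \in P^K_f$ with $\absv{c}_v = 1$, and the Chinese remainder theorem decomposes $R_c/(c^n-1) R_c$ as the finite product of local quotients $O_v/(c^n-1) O_v$. These factors are nontrivial exactly when $\absv{c^n-1}_v < 1$. On the other side, the second isomorphism theorem gives
\[
(c^n-1)^{-1}\cO_K \,/\, \bigl(\cO_K \cap (c^n-1)^{-1}\cO_K\bigr) \;\cong\; \bigl((c^n-1)^{-1}\cO_K + \cO_K\bigr)/\cO_K,
\]
and a local computation at each finite place identifies this with the same product. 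The verification rests on the elementary observation that $\absv{c^n-1}_v \geq 1$ whenever $\absv{c}_v \neq 1$, so the primes in $P^K_f \cap P(c)$ where $R_c$ and $\cO_K$ differ contribute trivially on both sides.

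The main subtlety — and the step that demands care — is exactly this last reconciliation: the natural description of the fixed point set uses the auxiliary ring $R_c$, whereas the statement is phrased through $\cO_K$. The two descriptions agree only after one observes that the primes where the two rings differ are precisely those where the local $v$-adic contribution is automatically trivial; once this is in place, the bijection follows.
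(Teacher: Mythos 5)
Your argument is correct, and the first half coincides with the paper's: both identify the $\phi^n$-fixed points with $(c^n-1)^{-1}R_c/(R_c\cap(c^n-1)^{-1}R_c)$ (your simplification to $(c^n-1)^{-1}R_c/R_c$ is legitimate because $c^n-1\in R_c$, though you might state that explicitly). Where you diverge is in the reconciliation with the $\cO_K$-expression. The paper writes $R_c$ as an increasing union of fractional ideals each isomorphic to $\cO_K$ and observes that the finite quotients stabilize along the inductive system, so the limit equals the stage-zero quotient $(c^n-1)^{-1}\cO_K/(\cO_K\cap(c^n-1)^{-1}\cO_K)$. You instead localize: viewing $R_c$ as the ring of $S$-integers for $S=P(c)$, you decompose $R_c/(c^n-1)R_c$ by the Chinese remainder theorem over the maximal ideals of $R_c$, do the same for the $\cO_K$-quotient via the second isomorphism theorem and the prime factorization of the fractional ideal $(c^n-1)$, and match the two products using the ultrametric observation that $\absv{c^n-1}_\frp\ge 1$ at every finite $\frp\in P(c)$. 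The two routes carry the same content --- the stabilization in the paper's direct limit is ultimately forced by the same fact that the inverted primes contribute nothing --- but yours makes that mechanism explicit and yields the concrete product $\prod_{\frp}\cO_K/\frp^{e_\frp}$ over the primes with $\absv{c^n-1}_\frp<1$ as a byproduct, whereas the paper's version is shorter and avoids any ideal-theoretic bookkeeping. Both are complete proofs; yours is more self-contained at the cost of length.
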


\begin{proof}
Since $Y^{(c)}$ is defined as the quotient space $\prod K_v/R_c$, the points with period $n$ have coordinates $x$ (for all places $v$) such that $(c^n - 1) x$ belongs to $R_c$.
These points are parameterized by $(c^n - 1)^{-1} R_c) / (R_c \cap (c^n - 1)^{-1} R_c)$.
We can further simplify this quotient and arrive at the claim by writing $R_c$ as an increasing union of copies of $\cO_K$, and noticing the compatibility of the quotient with the inductive structure.
\end{proof}

\begin{proposition}
The automorphism of $H_k(R_c \ltimes \prod_{v \in P^s_f(c)} K_v) \otimes \Q$ induced by $\phi^{-1}$ has trace equal to that of $N \medwedge^k m_{c^{-1}}$ acting on $\medwedge^k \Gamma_0 \otimes \Q$.
\end{proposition}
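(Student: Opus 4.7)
The plan is to upgrade the identification from Theorem~\ref{thm:solstat} to an equivariant one with respect to $\phi^{-1}$, so that the trace reduces to that of an explicit endomorphism of $\medwedge^k \Gamma_0 \otimes \Q$.

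First I would describe how $\phi^{-1}$ acts on the level-by-level pieces of the inductive system. Multiplication by $c$ sends the ideal $\prod_v \pi_v^{n m_v} O_v$ onto $\prod_v \pi_v^{(n+1) m_v} O_v$ and acts by $m_c$ on $R_c$, hence yields a groupoid isomorphism $R_c \ltimes X^{(n)} \to R_c \ltimes X^{(n+1)}$; inverting, we obtain a groupoid isomorphism $R_c \ltimes X^{(n)} \to R_c \ltimes X^{(n-1)}$ whose restriction to the basepoint stabilizer is $m_{c^{-1}} \colon \Gamma_n \to \Gamma_{n-1}$. Naturality of the transfer under isomorphisms then produces commutative squares with the transfer maps along the columns and $\medwedge^k m_{c^{-1}}$ along the rows, so $\phi^{-1}$ induces a well-defined endomorphism of $H_k(G) = \varinjlim_n \medwedge^k \Gamma_n$ that carries a class at level~$n$ represented by $v$ to the class at level~$n-1$ represented by $\medwedge^k m_{c^{-1}}(v)$.

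Next I would transport this description through the rescaling $\iota_n = \medwedge^k m_{c^{-n}} \colon \medwedge^k \Gamma_n \xrightarrow{\cong} \medwedge^k \Gamma_0$ used to prove Theorem~\ref{thm:solstat}. Since multiplication maps commute, the composition $\iota_{n-1} \circ \medwedge^k m_{c^{-1}} \circ \iota_n^{-1}$ equals $\id_{\medwedge^k \Gamma_0}$, so in the constant system $V = \medwedge^k \Gamma_0$ with connecting map $\theta'$ the action of $\phi^{-1}$ is just the downward level shift. Rationalizing, $\theta'$ coincides with $N \medwedge^k m_{c^{-1}}$ on $V \otimes \Q$ and becomes invertible, so the inclusion of level~$0$ furnishes an isomorphism $V \otimes \Q \to \varinjlim (V \otimes \Q)$. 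Using the defining relation $[v,-1] = [\theta'(v),0]$ of the direct limit, the downward level shift is transported to $\theta'$ itself, and therefore $\phi^{-1}_\ast$ on $V \otimes \Q$ is exactly $N \medwedge^k m_{c^{-1}}$, from which the trace identity follows.

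The main obstacle I anticipate is checking the naturality squares for the transfer maps in the first step, so that $\phi^{-1}$ genuinely descends to the inductive limit. While this is a standard naturality statement for the transfer in group homology under conjugation by an isomorphism, it has to be set up in a way compatible with the finite-index chain $\cdots \subset \Gamma_{n+1} \subset \Gamma_n \subset \cdots$ and with the groupoid isomorphisms coming from multiplication by $c$.
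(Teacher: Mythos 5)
Your proposal is correct and follows essentially the same route as the paper's proof: identify the action of $\phi^{-1}$ on $\varinjlim_n \medwedge^k\Gamma_n$ as the downward level shift after rescaling by $\medwedge^k m_{c^{-n}}$, and then use the defining relation of the inductive limit to recognize this shift as $\theta'$, whose rationalization is $N\medwedge^k m_{c^{-1}}$. The paper states this more tersely (writing $f(a_j)=a_{j+1}$ for $\phi$ and $\tilde f(a_j)=a_{j-1}=N(\medwedge^k m_{c^{-1}})(a)_j$ for $\phi^{-1}$), while you additionally spell out the naturality of the transfer under the isomorphisms $m_{c^{-1}}$, which is exactly the verification implicit in "unpacking the correspondence in Theorem \ref{thm:solstat}".
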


\begin{proof}
Given $a \in \medwedge^k \Gamma_0$, let us write $a_j$ for the element of $\varinjlim \medwedge^k \Gamma_0$ represented by $a$ in the $j$-th copy of $\medwedge^k \Gamma_0$.
Unpacking the correspondence in Theorem \ref{thm:solstat}, the automorphism on $\varinjlim \medwedge^k \Gamma_0$ induced by $\phi$ is the map $f(a_j) = a_{j + 1}$ for $a \in \medwedge^k \Gamma_0$.
Thus $\phi^{-1}$ induces the transform
\[
\tilde f(a_j) = a_{j - 1} = N (\medwedge^k m_{c^{-1}})(a)_j,
\]
hence we obtain the claim.
\end{proof}

\subsection{Comparison with \texorpdfstring{$K$}{K}-groups}

The discussion above has a straightforward counterpart in $K$-theory for the corresponding groupoid $C^*$-algebras.
As we are working with the transformation groupoid $R_c \ltimes \prod_{v \in P^s(c)} K_v$, the algebra of interest is the crossed product $C^*$-algebra $C_0(\prod_{v \in P^s(c)} K_v)\rtimes  R_c$.
(Of course, the stable case is analogous.)

Let us first see the analogue of Proposition \ref{prop:poincare-type-iso-for-alg-action} in $K$-theory.

Let $\bC_d$ denote the (complex) Clifford algebra of the Euclidean space $\R^d$, which is a $\Z_2$-graded C$^*$-algebra.
Then there is a Dirac morphism, given by a (strictly) equivariant unbounded Fredholm module $D$ between $C_0(\R^d) \otimes \bC_d$ and $\bC$ for the translation action of $\R^d$ on $C_0(\R^d)$, hence a class
\[
[D] \in \KK^{\R^d}(C_0(\R^d) \otimes \bC_d, \bC) = \KK^{\R^d}_d(C_0(\R^d), \bC).
\]
This class $[D]$ is invertible by Connes's Thom isomorphism theorem in $\KK$-theory \cite{skandalis:fack}.
This can be also interpreted as the strong Baum--Connes conjecture for $\R^d$ \cite{higkas:bc} (see also \cite{val:shi}*{Corollary 2.3}).

Now, with $d$ as in Proposition \ref{prop:poincare-type-iso-for-alg-action}, take the group embedding of $R_c$ into $\R^d$ corresponding to the completion at infinite places.
Since $D$ is given by a strictly $\R^d$-equivariant Fredholm module, we obtain a class
\[
[D_c] \in \KK^{R_c}_d(C_0(\R^d),\mathbb{C}).
\]

\begin{proposition}\label{prop:K-theoretic-deg-shift-isom}
The class $[D_c]$ induces an isomorphism 
\[
K_{\bullet} \biggl( C_0\biggl(\prod_{v \in P^s(c)} K_v\biggr)\rtimes R_c\biggr)\cong  K_{\bullet+d} \biggl( C_0\biggl(\prod_{v \in P^s_f(c)}K_v\biggr) \rtimes R_c \biggr).
\]
\end{proposition}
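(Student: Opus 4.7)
The plan is to realize the isomorphism as Kasparov product with a class in $\KK_d$ between the two crossed products, constructed from $[D_c]$ by external product and descent.

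By Connes's Thom isomorphism theorem (equivalently, the strong Baum--Connes property for $\R^d$, cf.~\cite{higkas:bc}), the Dirac class $[D] \in \KK^{\R^d}_d(C_0(\R^d), \bC)$ is invertible. Restriction along the group embedding $R_c \hookrightarrow \R^d$ coming from the infinite places is a functor $\KK^{\R^d} \to \KK^{R_c}$, so the restriction of $[D]^{-1}$ provides an inverse to $[D_c]$ in $\KK^{R_c}_d(C_0(\R^d), \bC)$. Using the decomposition $\prod_{v \in P^s(c)} K_v \cong \R^d \times \prod_{v \in P^s_f(c)} K_v$ with its diagonal $R_c$-action, I would form the external product with the identity on $C_0(\prod_{v \in P^s_f(c)} K_v)$, producing the invertible class
\[
\alpha = [D_c] \otimes \id \in \KK^{R_c}_d\biggl( C_0\biggl(\prod_{v \in P^s(c)} K_v\biggr), C_0\biggl(\prod_{v \in P^s_f(c)} K_v\biggr) \biggr),
\]
with inverse $[D_c]^{-1} \otimes \id$.

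Finally, I would apply Kasparov's descent functor $\KK^{R_c} \to \KK$, which preserves Kasparov products and therefore invertibility. The image of $\alpha$ is an invertible element of $\KK_d$ between the two crossed products appearing in the statement, and Kasparov product with this class induces the asserted isomorphism $K_{\bullet} \to K_{\bullet + d}$. The argument is essentially mechanical within Kasparov theory, with the only substantive input being Connes's Thom isomorphism; the main piece of bookkeeping is verifying that the $R_c$-action on $\prod_{v \in P^s(c)} K_v$ is genuinely diagonal with respect to the splitting, so that the external product and descent of $[D_c]$ actually implement the degree-shifting map between the crossed products in question.
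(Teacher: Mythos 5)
Your proposal is correct and follows essentially the same route as the paper: decompose $C_0(\prod_{v \in P^s(c)} K_v) \cong C_0(\prod_{v \in P^s_f(c)} K_v) \otimes C_0(\R^d)$, tensor the (restricted, hence still invertible) Dirac class $[D_c]$ with the identity to get an invertible equivariant $\KK$-class, and apply Kasparov descent. The only cosmetic difference is that the paper phrases the degree shift via the Clifford algebra $\bC_d$ rather than via graded $\KK_d$-groups.
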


\begin{proof}
Let us set $A= C_0(\prod_{v \in P^s(c)} K_v)$ and $B=C_0(\prod_{v \in P^s_f(c)} K_v$), so that we have $A \cong B \otimes C_0(\R^d)$.
Then $f=\mathrm{id}_B\otimes [D_c]$ defines an invertible class in $\KK^{R_c}(A\otimes \bC_d,B)$.

Taking Kasparov's descent (see \cite{kas:descent}*{page 172}), we see that the crossed product algebras $(A \otimes \bC_d) \rtimes R_c = A \rtimes R_c \otimes \bC_d$ and $B \rtimes R_c$ are KK-equivalent.
Taking into account the degree-shift induced by the Clifford algebra, we obtain the claim.
\end{proof}

Next let us present an analogue of Theorem \ref{thm:solstat}.
From the inverse limit presentation of \eqref{eq:prlimsol}, we obtain an inductive limit structure
\begin{equation}\label{eq:prlimsolc}
C_0\biggl(\prod_{v \in P^s_f(c)} K_v\biggr) \rtimes  R_c\cong \varinjlim_n C_0(X^{(n)})\rtimes R_c.
\end{equation}

For a $\Z$-module $A$, let us define $\medwedge^{[i]}A$ as the direct sum of all exterior powers of $A$ whose degree is congruent to $i$ modulo $2$.
Analogously, we define $H_{[i]}(G,\sfZ)$ as the direct sum of homology groups with corresponding degree parity.

\begin{theorem}\label{thm:hksolthm}
There is an isomorphism 
\[
K_i\biggl(C_0\biggl(\prod_{v \in P^s_f(c)} K_v\biggr) \rtimes  R_c\biggr)\cong \varinjlim \medwedge^{[i]} \Gamma_0,
\]
with connecting maps given by the unique extension of 
\[
N\bigwedge^{[i]}m_{c^{-1}}\colon \bigwedge^{[i]} \Gamma_1\to \bigwedge^{[i]}\Gamma_0
\]
as in Theorem \ref{thm:solstat}.
\end{theorem}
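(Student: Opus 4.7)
The strategy is to run the K-theoretic analogue of the proof of Theorem \ref{thm:solstat}, replacing each homological computation by the corresponding K-theoretic one via the Chern character. Continuity of K-theory applied to the inductive limit presentation \eqref{eq:prlimsolc} gives
\[
K_i\Bigl(C_0\bigl(\textstyle\prod_{v \in P^s_f(c)}K_v\bigr)\rtimes R_c\Bigr) \cong \varinjlim_n K_i\bigl(C_0(X^{(n)})\rtimes R_c\bigr).
\]
Since $X^{(n)} \cong R_c/\Gamma_n$ is a transitive $R_c$-set with stabilizer $\Gamma_n$, Green's imprimitivity theorem supplies a Morita equivalence $C_0(X^{(n)})\rtimes R_c \sim C^*(\Gamma_n)$, reducing the problem to computing $K_i(C^*(\Gamma_n))$ together with the induced connecting maps.

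Each $\Gamma_n$ is a finitely generated torsion-free abelian group, so Pontryagin duality identifies $C^*(\Gamma_n)$ with $C(\widehat{\Gamma_n}) \cong C(\mathbb{T}^r)$, where $r$ is the rank of $\Gamma_n$. The Chern character then supplies the natural isomorphism $K_i(C^*(\Gamma_n)) \cong \medwedge^{[i]}\Gamma_n$, directly parallel to the identification $H_k(\Gamma_n) \cong \medwedge^k \Gamma_n$ used in Theorem \ref{thm:solstat}. Moreover, under the Morita equivalence, the connecting maps in \eqref{eq:prlimsolc} correspond to the composition of the finite-index inclusion $\Gamma_{n+1} \hookrightarrow \Gamma_n$ with the dynamical isomorphism $m_{c^{-1}}\colon \Gamma_{n+1}\to \Gamma_n$, exactly as in the proof of Theorem \ref{thm:solstat}.

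The main obstacle is to verify that the K-theoretic transfer $K_i(C^*(\Gamma_n)) \to K_i(C^*(\Gamma_{n+1}))$, after translation through the Chern character, agrees with the unique extension of $N\cdot\id$ from $\medwedge^{[i]}\Gamma_{n+1}$ to $\medwedge^{[i]}\Gamma_n$ used in Theorem \ref{thm:solstat}. The cleanest way to obtain this is via the Baum--Connes isomorphism for finitely generated abelian groups, which is classical and which intertwines the K-theoretic and group-homological transfers; the homological transfer was already computed to be multiplication by $N$ on the subgroup in the proof of Theorem \ref{thm:solstat}, and the uniqueness of the extension to the ambient torsion-free group completes the identification. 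Composing with $m_{c^{-1}}$ then yields the connecting map $N\medwedge^{[i]}m_{c^{-1}}$ on $\medwedge^{[i]}\Gamma_0$, and passing to the inductive limit gives the claim.
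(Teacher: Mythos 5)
Your overall strategy is the same as the paper's: continuity of $K$-theory over the inductive limit \eqref{eq:prlimsolc}, Morita equivalence of $C_0(X^{(n)})\rtimes R_c$ with $C^*(\Gamma_n)$, identification of $K_i(C^*(\Gamma_n))$ with $\medwedge^{[i]}\Gamma_n$, and the unique-extension argument for the connecting map. However, there is a concrete error in the second step: the groups $\Gamma_n$ are \emph{not} finitely generated in general. Whenever $P^u(c)$ contains a finite place $v$ (as in the motivating Burke--Putnam case $c=q/p$, where $\Gamma_n=q^n\Z[1/p]$), elements of $R_c$ may have unbounded absolute value at $v$, so $\Gamma_n$ has infinite rank as a $\Z$-module and $\widehat{\Gamma_n}$ is not a torus. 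The identification $C^*(\Gamma_n)\cong C(\bT^r)$ and the appeal to the Chern character therefore do not apply as stated. The repair is routine but must be made explicit: exhaust $\Gamma_n$ by the finitely generated free abelian subgroups $\Gamma_n^k=\{a\in\Gamma_n\mid \forall v\in P^u_f(c)\colon \absv{a}_v\le\absv{c}_v^k\}$, establish $K_i(C^*(\Gamma_n^k))\cong\medwedge^{[i]}\Gamma_n^k$ for each $k$, and pass to the colimit in $k$ using that both sides commute with colimits. This is exactly what the paper does.

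For the identification of the connecting map, your route differs from the paper's: you invoke naturality of the Baum--Connes assembly map under finite-index inclusions to import the homological transfer computation from Theorem \ref{thm:solstat}, whereas the paper gives a direct, self-contained computation, writing $C(G/H)\rtimes_r G\cong M_N(\bC)\otimes C^*_r(H)$ via a choice of coset representatives and observing that, by commutativity, $\lambda_g$ for $g\in H=\Gamma_{n+1}$ is sent to a diagonal embedding, so the composite on $K$-theory restricted to $K_i(C^*(\Gamma_{n+1}))$ is multiplication by $N$. Your approach can be made to work, but as written it conflates the topological side of Baum--Connes (the $K$-homology of $B\Gamma_n$) with group homology; one needs the integral Chern character isomorphism for tori and its compatibility with transfers, which should be stated, and in any case this again presupposes finite generation. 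The paper's matrix-unit argument avoids both issues and is the more economical choice here.
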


\begin{proof}
On one hand, taking $K$-groups is compatible with taking inductive limits of $C^*$-algebras.
On the other, $X^{(n)}$ is a transitive $R_c$-set with stabilizer equal to $\Gamma_n$.
Combining these and \eqref{eq:prlimsolc}, we have
\begin{equation*}
K_i\biggl(C_0\biggl(\prod K_v\biggr) \rtimes  R_c\biggr)\cong \varinjlim_n K_i(C^*(\Gamma_n)).
\end{equation*}

Next, let us identify $K_i(C^*(\Gamma_n))$ with $\medwedge^{[i]} \Gamma_n$.
Consider the subgroup
\[
\Gamma_n^k = \{a\in\Gamma_n \mid \forall v\in  P^u_f(c) \colon \absv{a}_v \le \absv{c}_v^{k} \} < \Gamma_n.
\]
We then have $\bigcup_k \Gamma^k_n = \Gamma_n$.
Each group $\Gamma^k_n$ is finitely generated, because it is generated by a finite union of groups which are isomorphic to the ring of integers $\cO_K$, which is free abelian of rank equal to the degree of $K$.
Thus $\Gamma^k_n$, being torsion free and finitely generated commutative group, is also free abelian.
From this we obtain
\[
K_i(C^*(\Gamma^k_n)) \cong \medwedge^{[i]}\Gamma^k_n.
\]
As both sides are compatible with colimit, we obtain
\[
K_i(C^*(\Gamma_n)) \cong \medwedge^{[i]} \Gamma_n.
\]

It remains to identify the connecting map
\begin{equation}\label{eq:K-group-connecting-map}
K_i(C^*(\Gamma_n)) \to K_i(C^*(\Gamma_{n + 1})).
\end{equation}

Generally, suppose one has a finite index inclusion of discrete groups $H < G$.
Consider the $G$-equivariant map $\bC \to C(G/H)$, and the induced map $C_r^*(G)\to C(G/H)\rtimes_r G$.
Then the map of the reduced group C$^*$-algebras
\[
K_\bullet(C^*_r(G)) \to K_\bullet(C(G/H)\rtimes_r G) \cong K_\bullet(C^*_r(H))
\]
can be computed as follows.
We fix a system of representatives $(g_i)_{i = 1}^N$ of $G/H$.
Then we get an isomorphism
\[
C(G/H) \rtimes_r G \cong M_N(\bC)\otimes C_r^*(H)
\]
that sends $\delta_{g_i} \in C(G/H)$ to $e_{i,i} \in M_N$ and $\lambda_g \in C^*_r(G)$ to $\sum_i e_{\sigma_g(i),i}\otimes \delta_{h(i,g)}$, where $\sigma_g(i)$ and $h(i,g)$ are characterized by the relation $g g_i = g_{\sigma_g(i)}h(i,g)$.

If we use $G=\Gamma_n$ and $H=\Gamma_{n+1}$, by commutativity, the image of $\lambda_g \in C^*_r(\Gamma_{n+1})$ under the map is a diagonal matrix embedding of $g$.
Thus, \eqref{eq:K-group-connecting-map} is an extension of the multiplication by $N$ on $K_\bullet(C^*(\Gamma_{n+1}))$.
Taking into account the isomorphisms $\medwedge^{[i]} \Gamma_n \cong \medwedge^{[i]} \Gamma_0$ given by composition of $m_{c^{-1}}$, we obtain the claim.
\end{proof}

\begin{corollary}\label{cor:HK-conj-for-num-theretic-solenoid}
The HK conjecture holds for the groupoid $R_c \ltimes \prod_{v \in P^s_f(c)} K_v$.
\end{corollary}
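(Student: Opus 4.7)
The plan is to prove the corollary by combining Theorem \ref{thm:solstat} (which computes $H_k(G, \sfZ)$) and Theorem \ref{thm:hksolthm} (which computes $K_i(C^*_r G)$), then exploiting the fact that filtered colimits in the category of abelian groups commute with arbitrary direct sums. No genuinely new ingredient is required.

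First, I would unpack the definition $\medwedge^{[i]} \Gamma_0 = \bigoplus_{k \in \Z} \medwedge^{i+2k} \Gamma_0$ inside the presentation
\[
K_i(C^*_r G) \cong \varinjlim \medwedge^{[i]} \Gamma_0
\]
provided by Theorem \ref{thm:hksolthm}. The connecting map in that system is the unique extension of $N \bigwedge^{[i]} m_{c^{-1}}$, which by construction respects the grading by degree of exterior power: on each summand $\medwedge^{i+2k} \Gamma_1$ it is precisely $N \medwedge^{i+2k} m_{c^{-1}}$, the same map that appears in Theorem \ref{thm:solstat} as the connecting map for the inductive system computing $H_{i+2k}(G, \sfZ)$. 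Thus the inductive system $(\medwedge^{[i]} \Gamma_0)$ is the direct sum, over $k \in \Z$, of the inductive systems $(\medwedge^{i+2k} \Gamma_0)$.

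Next, since filtered colimits commute with direct sums, I would conclude
\[
K_i(C^*_r G) \cong \varinjlim \bigoplus_{k \in \Z} \medwedge^{i+2k} \Gamma_0 \cong \bigoplus_{k \in \Z} \varinjlim \medwedge^{i+2k} \Gamma_0 \cong \bigoplus_{k \in \Z} H_{i+2k}(G, \sfZ),
\]
where the last step is Theorem \ref{thm:solstat}. This is exactly the periodicization isomorphism asked for by the HK conjecture.

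There is essentially no obstacle: the only point requiring care is the bookkeeping that the connecting map provided by Theorem \ref{thm:hksolthm} really is the direct sum, across $k$, of the connecting maps provided by Theorem \ref{thm:solstat}. This is immediate once one observes that $\bigwedge^{[i]} m_{c^{-1}}$ is by definition the direct sum of the $\bigwedge^{i+2k} m_{c^{-1}}$, and that multiplication by $N$ is grading-preserving, so the unique extension from $\Gamma_1$ to $\Gamma_0$ on each summand glues to the unique extension on $\medwedge^{[i]}$. I would also remark that, although the statement is only about the groupoid with finite places, the same argument combined with Propositions \ref{prop:poincare-type-iso-for-alg-action} and \ref{prop:K-theoretic-deg-shift-isom} (both of which shift degrees by the same $d$) yields the HK conjecture for the full (un)stable groupoid $R_c \ltimes \prod_{v \in P^s(c)} K_v$ as stated in the introduction.
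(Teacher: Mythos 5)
Your argument is correct and is essentially the one the paper intends: the corollary is stated without proof precisely because it follows by comparing the inductive-limit presentations of Theorems \ref{thm:solstat} and \ref{thm:hksolthm}, observing that the connecting map on $\medwedge^{[i]}\Gamma_0$ is the direct sum over $k$ of the connecting maps on $\medwedge^{i+2k}\Gamma_0$, and commuting the colimit with the direct sum. Your closing remark about extending to the full groupoid $R_c \ltimes \prod_{v \in P^s(c)} K_v$ via Propositions \ref{prop:poincare-type-iso-for-alg-action} and \ref{prop:K-theoretic-deg-shift-isom} also matches the paper's stated intent.
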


\begin{remark}
While the original HK-conjecture \cite{MR3552533}*{Conjecture 2.6} was formulated for ample groupoids, it has an obvious generalization to the setting of étale groupoids.
Unstable and stable groupoids of Smale spaces, reduced to transversal subspaces, give rich example of such groupoids.
While there are counterexamples to the conjecture (in the ample case) \citelist{\cite{deeley:hk}\cite{scarparo:hk}}, Corollary \ref{cor:HK-conj-for-num-theretic-solenoid} holds for the stable and unstable groupoids of the Smale space $(Y^{(c)}, \phi)$.
In this case Propositions \ref{prop:poincare-type-iso-for-alg-action} and \ref{prop:K-theoretic-deg-shift-isom} can be viewed as a reduction step to the ample case.
\end{remark}

\begin{remark}
We note that there exist maps $\mu_i\colon H_i(G)\to K_i(C^*_rG)$  for $i = 0, 1$ in the case of ample groupoids, and they have been studied in \cites{bdgw:matui,matui:hklon}.
For higher degrees, one can still construct analogous maps by looking at kernels of higher differentials from the associated spectral sequence from~\cite{valmak:groupoid}.
\end{remark}

Before looking at some examples, let us identify the positive cone of the $K_0$-group appearing in Theorem \ref{thm:hksolthm}.
We are going to need the following cancellation theorem.

\begin{lemma}[\cite{MR1249482}*{Theorem 9.1.2}]\label{lem:brank}
Let $X$ be a CW-complex of dimension $n$, and set $s=\lceil n/2\rceil$.
Every complex vector bundle $E$ over $X$ of rank $r \ge s$ splits as $E\cong E^\prime \oplus \varepsilon^{r-s}$, where $E'$ is a vector bundle over $X$ and $\varepsilon^k$ is the trivial bundle of rank $k$.
\end{lemma}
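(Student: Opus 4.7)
The plan is to prove the splitting by induction on $r - s$, peeling off one trivial line bundle at each step via obstruction theory. The base case $r = s$ is immediate with $E' = E$. For the inductive step, the key sublemma is the following: a complex vector bundle $F$ of rank $r'$ over $X$ admits a nowhere-vanishing section whenever $2r' > n$. Indeed, after equipping $F$ with a Hermitian metric, nowhere-vanishing sections of $F$ correspond to sections of the associated unit sphere bundle, whose fiber $S^{2r'-1}$ is $(2r'-2)$-connected. By the standard obstruction theory for sections of fiber bundles, the obstructions to extending a section from the $k$-skeleton to the $(k+1)$-skeleton lie in $H^{k+1}(X;\pi_k(S^{2r'-1}))$, and these coefficient groups vanish for $k \le 2r' - 2$; the hypothesis $2r' > n$ then forces every obstruction in degree $\le n$ to vanish.

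Given the sublemma, I would take a nowhere-vanishing section $\sigma$ of $E$, identify the complex line subbundle $\bC \cdot \sigma \subset E$ with $\varepsilon$, and take its Hermitian orthogonal complement to produce a splitting $E \cong \varepsilon \oplus E_0$ with $E_0$ of rank $r - 1$. Since $r - 1 \ge s$, the inductive hypothesis applies to $E_0$ and yields $E_0 \cong E' \oplus \varepsilon^{r - 1 - s}$, hence $E \cong E' \oplus \varepsilon^{r - s}$ as desired.

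The only point that requires care is verifying the hypothesis $2r' > n$ of the sublemma at each intermediate rank $r' \in \{s+1, \dots, r\}$ used during the induction. The minimum such value is $r' = s + 1$, and the inequality $2(s+1) > n$ is exactly what the choice $s = \lceil n/2 \rceil$ guarantees, independently of the parity of $n$. This is why the stable range is tight; since the overall argument is a textbook application of obstruction theory there is no serious obstacle, which is why the statement can be quoted directly from \cite{MR1249482}*{Theorem 9.1.2}.
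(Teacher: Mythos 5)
Your argument is correct: the obstruction-theoretic sublemma (a rank-$r'$ complex bundle over an $n$-complex has a nowhere-vanishing section once $2r' > n$, since the fiber $S^{2r'-1}$ of the unit sphere bundle is $(2r'-2)$-connected and all obstruction groups $H^{k+1}(X;\pi_k(S^{2r'-1}))$ vanish for $k \le n-1$) together with the induction peeling off trivial line subbundles is exactly the standard proof of this stability result. The paper does not prove the lemma at all --- it is quoted verbatim from Husemoller, Theorem 9.1.2 --- and your proof is essentially the one found there, so there is nothing to compare beyond noting that your minor aside about tightness is only accurate for even $n$ (for odd $n$ one has $2s = n+1 > n$ and could split one step further), which does not affect the validity of the statement or your argument.
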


Let us write $\Z^+[1/N]$ for the subset of positive numbers in $\Z[1/N]$ (as a subset of $\R$).

\begin{proposition}\label{prop:pcone}
Suppose that $N > 1$.
Under the identification in Theorem \ref{thm:hksolthm}, we have 
\[
K_0\biggl(C_0\biggl(\prod_{v \in P^s_f(c)} K_v\biggr) \rtimes  R_c\biggr)^+ \cong \Z^+[1/N]\oplus \bigoplus_{i\geq 1} \varinjlim \medwedge^{2i} \Gamma_0.
\]
\end{proposition}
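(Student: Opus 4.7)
The plan is to combine the inductive limit description of $K_0$ from Theorem \ref{thm:hksolthm} with a stage-by-stage analysis of the positive cone, using Lemma \ref{lem:brank} as the key input. Each $\Gamma_n^k$ is a fractional ideal of $K$, hence free abelian of rank $r_{n,k} \leq d := [K : \Q]$, so $C^*(\Gamma_n^k) \cong C(T^{r_{n,k}})$ and $K_0(C^*(\Gamma_n^k)) \cong \medwedge^{[0]} \Gamma_n^k$; under this identification the positive cone consists of classes of complex vector bundles over $T^{r_{n,k}}$, whose rank equals the degree-zero component of the class.

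First I would establish the following local claim: any class $\xi \in K^0(T^r)$ with degree-zero component $\xi_0 \geq s := \lceil r/2 \rceil$ lies in $K^0(T^r)^+$. Writing $\xi = [E] - [\varepsilon^j]$ for a bundle $E$ of rank $\xi_0 + j$ (using Swan's theorem to arrange the denominator as a trivial bundle), Lemma \ref{lem:brank} yields a splitting $E \cong E' \oplus \varepsilon^{\xi_0 + j - s}$; since $\xi_0 \geq s$, the trivial summand has rank at least $j$, so one can peel off the required $\varepsilon^j$ and exhibit $\xi$ itself as a bundle class. Next, given a limit class whose degree-zero component is $x \in \Z[1/N]$ with $x > 0$, its representative at stage $n$ has rank $x \cdot N^n$; since $N > 1$, for all sufficiently large $n$ this rank is an integer exceeding the uniform threshold $\lceil d/2 \rceil$, and the local claim (applied at any $(n,k)$ stage where the higher-degree components have been realized) places the representative in $K^0(T^{r_{n,k}})^+$, hence $\xi \in K_0^+$.

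The converse direction is an easy observation: any element of $K_0^+$ is represented at some finite $(n,k)$ stage by a genuine vector bundle, which has non-negative rank, and a rank-zero bundle is the zero bundle; therefore the degree-zero component of a nonzero positive class is strictly positive. The main obstacle I anticipate is the bookkeeping around the two nested inductive limits (over $n$, and internal to each $C^*(\Gamma_n)$ over $k$): one must check that the higher-degree exterior components $y_{2i}$ of a limit class admit a simultaneous representative at a single $(n,k)$ stage where $n$ is also large enough to meet the rank threshold. Worth emphasizing is that the inclusion $K^0(T^r)^+ \subset \{\xi : \xi_0 \geq 1\}$ can be strict at a fixed stage when $r \geq 4$ (for instance, a class with rank one and nonzero $\medwedge^4$ component cannot be represented by any line bundle); the hypothesis $N > 1$ is precisely what allows us to bypass this obstruction in the limit, by pushing the rank of the representative above the threshold required by Lemma \ref{lem:brank}.
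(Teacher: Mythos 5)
Your proof is correct and follows essentially the same route as the paper's: both directions reduce to a vector-bundle representative over a torus $C^*\Gamma_n^k \cong C(\bT^d)$, identify the degree-zero component of the class with the rank of the bundle (in the paper, via the canonical tracial state), and use the fact that the connecting maps multiply this rank by $N>1$ to push a representative above the stable-range threshold $s=\lceil d/2\rceil$ so that Lemma \ref{lem:brank} applies. The peeling-off computation $[E]-[\varepsilon^j]=[E'\oplus\varepsilon^{\xi_0-j'}]$ and the converse via strict positivity of the rank of a nonzero projection are exactly the paper's steps.
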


\begin{proof}
Let us write $A_n = C_0(X^{(n)}) \rtimes R_c$, and $\phi^{n+1}_n$ be the connecting map of the inductive system~\eqref{eq:prlimsolc}.
We further write
\[
A_\infty = C_0\biggl(\prod_{v \in P^s_f(c)} K_v\biggr) \rtimes  R_c.
\]

Let $x \in K_0(A_\infty)$ be a positive element.
By the density of $\bigcup_n A_n$ in $A_\infty$, we find a positive element $x_n^\prime\in K_0(A_n)$.
Using the Morita equivalence between $R_c\ltimes X^{(n)}$ and $\Gamma_n$, we can further replace $x_n^\prime$ with $x_n\in K_0(C^*\Gamma_n)$, which is still positive as the Morita equivalence preserves positivity.
Again by density of $\bigcup_k C^* \Gamma_n^k$ in $C^* \Gamma_n$ for the subgroups $\Gamma_n^k$ from the proof of Theorem~\ref{thm:hksolthm}, we get a projection $p \in M_d(C^* \Gamma_n^k)$ whose class maps to $x$.

Now, under the isomorphism $K_0(C^*\Gamma^k_{n})\cong\medwedge^{[0]}\Gamma^k_{n}$, the canonical tracial state $\tau_n$ of the group algebra $C^* \Gamma^k_n$ picks up the constant term in $\medwedge^0 \Gamma^k_n \cong \Z$, which must be a positive integer for $x$.
When we increase $n$, the action of $\phi^{n+1}_n$ on the constant term is multiplication by $N$.
We thus obtain the degree $0$ part of $x$ to be in $\Z^+[1/N]$ as claimed.

For the converse implication, let us take an element $x \in K_0(A_\infty)$ whose degree $0$ term belongs to $\Z^+[1/N]$.
By reasoning as above, we find its representative $x^k_n\in K_0(C^*\Gamma^k_n)$, whose degree $0$ part is positive in $\medwedge^0 \Gamma^k_n \cong \Z$.

Recall that $\Gamma^k_n$ is free abelian.
Moreover, its rank $d$ is independent of $n$ and (big enough) $k$, as we have $d = \dim_\Q \Gamma_n \otimes \Q$.
We thus have $C^*\Gamma^k_n\cong C(\bT^d)$ for all $k$ and $n$.
The degree $0$ part $\tau_n(x^k_n)$ of $x^k_n$ agrees with the rank of the corresponding vector bundle on $\bT^d$ in the latter picture.

Now, take $m > 0$ big enough such that $r = \tau_{n+m}(\phi_{n,\ast}^{n+m}(x_n^k)) = N^{m} \tau_n(x)$ is larger than $s = \lceil d/2\rceil$.
Then the corresponding element in $K_0(C^* \Gamma_{n + m}^{\ell}) \cong K^0(\bT^d)$ is represented by $[E] - [\varepsilon^t]$, where $E$ is a vector bundle of rank $r + t$.
By Lemma \ref{lem:brank}, we have
\[
[E]-[\varepsilon^t] = [E^\prime]+[\varepsilon^{r + t - s}] -[\varepsilon^t] = [E^\prime\oplus \varepsilon^{r - s}],
\]
which represents a positive element.
This completes the proof.
\end{proof}

\subsection{Degree \texorpdfstring{$1$}{1} case}

The case of $K(c) = \Q$, i.e., $c \in \Q$, is considered by Burke and Putnam \cite{buput:ntsol}, where they computed the stable and unstable Putnam homology groups.
To simplify the presentation let us consider the case of two prime factors, as follows.

Let $p < q$ be two prime numbers, and put $M = p q$, $c = \frac{q}{p}$, $R_c = \Z[1/M]$.
Our Smale space is given by the compact space 
\[
X = Y^{(c)} = (\R \times \Q_p \times \Q_q) / R_c
\] 
and the self-homeomorphism $\phi([x, y, z]) = [ c x, c y, c z ]$.
Consequently the étale groupoid $G = R^u(X, \phi)|_{X^s(x_0)}$ is the transformation groupoid $R_c \ltimes \Q_q$, while $G' = R^s(X, \phi)|_{X^u(x_0)}$ is the transformation groupoid $R_c \ltimes (\R \times \Q_p)$.

Now the subgroup $\Gamma_n < R_c$ is given by
\[
\Gamma_n = \biggl\{ \frac{a q^n}{p^k} \mid a \in \Z, k \in \N \biggr\},
\]
hence we have $N = q$.
The exterior algebra $\medwedge^\bullet \Gamma_0$ is
\begin{align*}
\medwedge^0 \Gamma_0 &= \Z,&
\medwedge^1 \Gamma_0 &= \Z\biggl[\frac1p\biggr],&
\medwedge^k \Gamma_0 &= 0 \quad (k > 1).
\end{align*}
The map $N \medwedge^k m_{c^{-1}}$ is multiplication by $q$ for $k = 0$ and multiplication by $p$ for $k = 1$.
We thus obtain
\begin{align}\label{eq:R-c-Q-q-homology}
H_0(R_c \ltimes \Q_q) &\cong \Z\biggl[\frac1q\biggr],&
H_1(R_c \ltimes \Q_q) &\cong \Z\biggl[\frac1p\biggr],&
H_k(R_c \ltimes \Q_q) &= 0 \quad (k \neq 0, 1).
\end{align}
This gives a description of the groupoid homology $H_k(G, \sfZ)$.

As for the stable equivalence relation, Proposition \ref{prop:poincare-type-iso-for-alg-action} gives
\[
H_k(R_c \ltimes (\R \times \Q_p)) \cong H_{k + 1}(R_c \ltimes \Q_p).
\]
This, together with \eqref{eq:R-c-Q-q-homology} (switching the role of $p$ and $q$) gives
\begin{align*}
H_{-1}(G', \sfZ) &\cong \Z\biggl[\frac1p\biggr],&
H_0(G', \sfZ) &\cong \Z\biggl[\frac1q\biggr],&
H_k(G', \sfZ) &= 0 \quad (k \neq 0, -1).
\end{align*}

\begin{remark}
In view of Remark \ref{rem:idhomsmale}, the computation of Putnam's stable homology for $(Y^{(c)}, \phi)$ with $c \in \Q$ carried out in \cite{buput:ntsol} already gives the formulas \eqref{eq:R-c-Q-q-homology}.
The method presented here is arguably more direct and does not require a deep understanding of the dynamics of the system, on the other hand the method in \cite{buput:ntsol} is intimately tied to Markov partitions and allows to understand the system in terms of symbolic coding.
\end{remark}

\subsection{Degree \texorpdfstring{$2$}{2}, imaginary case}

Next consider the case of quadratic extensions.
To illustrate the situation associated with prime ideals that are not singly generated, we look at the case $K = \Q(\sqrt{-5})$, so that $\cO_K = \Z[\sqrt{-5}]$ is not a principal ideal domain.

To achieve this, let us take
\[
c = \frac{1 + \sqrt{-5}}{2}.
\]
The relevant finite places of $K$ correspond to the prime ideals
\begin{align*}
\frp_1 &= (2, 1 + \sqrt{-5}),&
\frp_2 &= (3, 1 + \sqrt{-5}).
\end{align*}
For $i = 1, 2$, we write $v_i$ for the corresponding place, $\absv{a}_i$ for the absolute value, $K_i$ for the completed local field, $O_i < K_i$ for the local ring, and $\pi_i \in O_i$ for the uniformizer.
Thus, $x \in \cO_K$ has absolute value $\absv{x}_i = \absv{\pi_i}_i^m$ if and only if the principal ideal $(x) < \cO_K$ decomposes as
\[
(x) = \frp_i^m \fra
\]
for some ideal $\fra < \cO_K$ such that $\frp_i \mathbin{\not|} \fra$.
Then
\begin{align*}
(2) &= \frp_1^2,&
(1 + \sqrt{-5}) &= \frp_1 \frp_2
\end{align*}
imply that we have
\begin{align*}
\absv{c}_1 &= \absv{\pi_1}_1^{-1},&
\absv{c}_2 &= \absv{\pi_2}_2.
\end{align*}
Writing $v_\infty$ for the unique infinite place, we have
\begin{align*}
P^s(c) &= \{v_2\},&
P^u(c) &= \{v_\infty, v_1\}.
\end{align*}

From this we get
\[
R_c = \biggl\{ \frac{x}{2^k (1 + \sqrt{-5})^k} \biggm\vert x \in \cO_K, k \in \N \biggr\}.
\]
(We cannot have $3$ in the denominator as the prime ideal $\frp_3 = (3, 1 - \sqrt{-5})$ satisfies $(3) = \frp_2 \frp_3$, hence $\frac13$ would have absolute value bigger than $1$ for the corresponding absolute value.)
Thus, the Smale space $(Y^{(c)}, \phi)$ is given by
\[
Y^{(c)} = (\bC \times K_1 \times K_2) / R_c,
\]
and the diagonal action of $c$ gives
\begin{align*}
R^u(X, \phi)|_{X^s(x_0)} &\cong R_c \ltimes K_2,&
R^s(X, \phi)|_{X^u(x_0)} &\cong R_c \ltimes (\bC \times K_1)
\end{align*}
for $x_0 = (0, 0, 0)$.

To compute its groupoid homology for unstable groupoid, the general constructions from above give
\begin{align*}
\Gamma_0 &= \biggl\{ \frac{x}{2^k} \biggm\vert x \in \cO_K, k \in \N \biggr\},&
\Gamma_1 &= \biggl\{ \frac{x}{2^k} \biggm\vert x \in \frp_2, k \in \N \biggr\}.
\end{align*}
Thus, $\Gamma_0 / \Gamma_1 \cong \cO_K / \frp_2$ and we have $N = [\Gamma_0 : \Gamma_1] = 3$.
As for the exterior algebra, we have
\begin{align*}
\medwedge^2 \Gamma_0 &= \biggl\{ \frac{1 \wedge \sqrt{-5}}{2^j} k \biggm\vert j \in \N, k \in \Z \biggr\},&
\medwedge^k \Gamma_0 &= 0 \quad (k > 2).
\end{align*}
Similarly, we have
\[
\medwedge^2 \Gamma_1 = \biggl\{ \frac{3 \wedge (1 + \sqrt{-5})}{2^j} k \biggm\vert j \in \N, k \in \Z \biggr\}
\]
as $\frp_2$ is free of rank $2$ as a $\Z$-module, with basis $3$ and $1 + \sqrt{-5}$.

Next let us identify the extensions of
\[
N \medwedge^k m_{c^{-1}} \colon \medwedge^k \Gamma_1 \to \medwedge^k \Gamma_0
\]
to $\medwedge^k \Gamma_0$, and the inductive limit with respect to this map.
When $k = 0$, by $N = 3$ it is the multiplication by $3$ on $\Z$, hence the limit is $\Z[\frac13]$.
When $k = 1$, it is multiplication by $6 / (1 + \sqrt{-5}) = 1 - \sqrt{-5}$, hence the limit is
\[
\varinjlim \Gamma_0 = \Z\biggl[\sqrt{-5}, \frac12, \frac1{1 - \sqrt{-5}}\biggr].
\]

When $k = 2$, this map is
\[
\medwedge^2 \Gamma_1 \to \medwedge^2 \Gamma_0, \quad \frac{3 \wedge (1 + \sqrt{-5})}{2^j} k \mapsto 3 \frac{(1 - \sqrt{-5}) \wedge 2}{2^j} k = 6 \frac{1 \wedge \sqrt{-5}}{2^j} k.
\]
On the other hand, in $\medwedge^2 \Gamma_0$ we have
\[
\frac{3 \wedge (1 + \sqrt{-5})}{2^j} k = 3 \frac{1 \wedge \sqrt{-5}}{2^j} k.
\]
Hence the above map extends to multiplication by $2$ on $\medwedge^2 \Gamma_0$.
Thus, the limit is given by
\[
\varinjlim \medwedge^2 \Gamma_0 \cong \Z\biggl[ \frac12 \biggr].
\]

Summarizing, the étale groupoid $G = R^u(X, \phi)|_{X^s(x_0)}$ has the integral homology groups
\begin{align*}
H_0(G) &\cong \Z\biggl[\frac13\biggr],&
H_1(G) &\cong \Z\biggl[\sqrt{-5}, \frac12, \frac1{1 - \sqrt{-5}}\biggr],&
H_2(G) &\cong \Z\biggl[\frac12\biggr],& 
H_k(G) &= 0 \quad (k \neq 0, 1, 2),
\end{align*}
on which the induced action of $\phi^{-1}$ is respectively by multiplication by $3$, $1 - \sqrt{-5}$, and $2$.

\medskip
The homology of $G' = R^s(X, \phi)|_{X^u(x_0)}$ can be computed in a similar way as above, combined with Proposition \ref{prop:poincare-type-iso-for-alg-action}, and we get
\begin{align*}
H_{-2}(G^\prime) &\cong \Z\biggl[\frac12\biggr],&
H_{-1}(G^\prime ) &\cong \Z\biggl[\sqrt{-5}, \frac12, \frac1{1 + \sqrt{-5}}\biggr],&
H_0(G^\prime) &\cong \Z\biggl[\frac16\biggr],& 
\end{align*}
on which the induced action of $\phi$ is respectively by multiplication by $2$, $1 + \sqrt{-5}$, and $3$, and $H_k(G^\prime) = 0$ for $k \neq 0, -1, -2$.

\subsection{Degree \texorpdfstring{$2$}{2}, real case}

Let us next consider a case with nontrivial unit.
We look at a unit in real quadratic field.

Concretely, let us take
\[
c = \frac{1 + \sqrt{5}}2,
\]
hence $K = \Q(\sqrt{5})$.
In this case we find that $c$ is invertible in $\cO_K = \Z[c]$, with $-c^{-1}$ being the Galois conjugate of $c$ (the nontrivial automorphism of $K$ is given by $a+b\sqrt{5}\mapsto a-b\sqrt{5}$).

Then the relevant places are the Archimedean places $v_\infty$ and $v_\infty'$, for which the corresponding absolute values are the usual one and its twist by the automorphism of $K$ that sends $c$ to $-c^{-1}$.
Thus, we have $R_c = \cO_K$, which is isomorphic to $\Z^2$ as a commutative group.
Then $Y^{(c)}$, being its Pontryagin dual, can be identified with $\bT^2$.

As a Smale space, the corresponding homeomorphism $\phi$ is induced by the matrix presentation of multiplication by $c$ on $\cO_K \cong \Z^2$, that is,
\[
\begin{bmatrix}
0 & 1 \\
1 & 1
\end{bmatrix}.
\]
This way we obtain the hyperbolic toral automorphism as in Example \ref{exa:rotalg}.

In this case, Theorem \ref{thm:poincare-duality} for $\tilde G = R^u(X, \phi)$ and $G = R^u(X, \phi)|_{X^s(x_0)}$ gives
\[
H_k(G) \cong \begin{cases}
\Z & (k = -1, 1)\\
\cO_K \cong \Z^2 & (k = 0)\\
0 & (\text{otherwise})
\end{cases}
\]
with $\phi^{-1}$ acting by $\pm 1$ for $k = \pm 1$, and by $-c^{-1}$ on $\cO_K$ for $k = 0$.
We also have an analogous presentation of homology for $G = R^s(X, \phi)|_{X^u(x_0)}$.

\begin{remark}
In general, the conjugacy classes of hyperbolic matrices in $\SL_n(\Z)$ with distinct eigenvalues bijectively correspond to the ideal classes in the integer rings of certain totally real fields, see \citelist{\cite{MR1503108}\cite{MR30491}\cite{MR735415}}.
\end{remark}

\appendix
\section{Duality for \texorpdfstring{$S$}{S}-integers}
\label{sec:app}

Let $K$ be a number field.
We denote a place of $K$ by $v$, and the associated absolute value by $\absv{a}_v$ for $a \in K$.
The associated completed local field is denoted by $K_v$, while (for a finite place $v$) its maximal compact subring is denoted by $O_v$.
Then the adele ring of $K$ is given by the restricted product
\[
\bA_K = \prod_{v \in P_\infty^K} K_v \times \prod_{v \in P_f^K} (K_v, O_v)
\]
of the $K_v$ relative to $O_v$.

Let $S$ be a finite set of places of $K$ which contains all the infinite places.
We denote by $R_S$ the ring of $S$-integers, i.e.,
\[
R_S = \{a \in K \mid \forall v \not\in S \colon \absv{a}_v \le 1 \},
\]
and by $\bA_{K,S}$ the ring of $S$-adeles, i.e.,
\[
\bA_{K,S} = \prod_{v \in S} K_v \times \prod_{v \not\in S} O_v.
\]
In the setting of Section \ref{sec:ntsol}, we take $S = P(c)$ so that $R_c = R_S$.

When $G$ is a locally compact commutative group, we denote its Pontryagin dual by $\hat{G}$.
Our goal is to establish the following.

\begin{theorem}\label{thm:dual-of-R-S}
The dual $\hat R_S$ of the additive group of $R_S$ is isomorphic to 
\[
G_S = \Bigl(\prod_{v \in S} K_v\Bigr) / R_S.
\]
\end{theorem}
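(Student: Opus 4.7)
The plan is to exploit Pontryagin duality for the diagonal embedding $R_S \hookrightarrow V := \prod_{v \in S} K_v$, combined with Tate's self-duality of $\bA_K$. First I would establish that $R_S$ is a discrete cocompact lattice in $V$. Discreteness follows from the product formula: a nonzero $a \in R_S$ satisfies $\absv{a}_v \le 1$ for $v \notin S$, hence $\prod_{v \in S} \absv{a}_v \ge 1$, preventing accumulation at the origin. For cocompactness, strong approximation yields $\bA_K = K + \bA_{K, S}$ (valid since $P_\infty^K \subseteq S$ is non-empty), so that $\bA_{K, S}/R_S \cong \bA_K/K$ is compact, and $V/R_S$ is the further quotient by the compact group $\prod_{v \notin S} O_v$.

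Next I would compute the annihilator $R_S^\perp \subset V$ under the pairing induced by Tate's character. Using $R_S = K \cap \bA_{K, S}$ together with the duality identity $R_S^\perp = \overline{K + \bA_{K, S}^\perp}$ in $\bA_K$, and noting that $K$ is discrete while $\bA_{K, S}^\perp$ equals the compact group obtained by placing $0$ at every $v \in S$ and $\mathfrak d_v^{-1}$ at $v \notin S$, one can intersect with $V \hookrightarrow \bA_K$ (extending by zero outside $S$) to identify $R_S^\perp \subset V$ with the diagonal embedding of the fractional $R_S$-ideal $\mathfrak a = \{k \in K : v_v(k) \ge -v_v(\mathfrak d) \text{ for all } v \notin S\}$, where $\mathfrak d$ denotes the different of $K/\Q$. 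Pontryagin duality then gives $\hat R_S \cong V/R_S^\perp$, while the compact group $V/R_S$ has Pontryagin dual (as discrete abelian group) isomorphic to $R_S^\perp$.

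Finally, to conclude $V/R_S \cong \hat R_S$, it suffices to exhibit an isomorphism of abstract discrete abelian groups $R_S \cong R_S^\perp$, since two compact abelian groups with isomorphic Pontryagin duals are themselves isomorphic. This follows from the observation that any two nonzero fractional ideals of the Dedekind domain $R_S$ are isomorphic as abelian groups: each is expressible as a direct limit, indexed by integral ideals supported on the finite places in $S$, of $\cO_K$-fractional ideals isomorphic to $\Z^{[K:\Q]}$ with the same pattern of finite-index transition maps.

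The main obstacle I anticipate is that the Tate pairing does not make $R_S$ self-annihilating unless $S$ contains all ramified primes of $K/\Q$: the discrepancy between $R_S^\perp$ and $R_S$ is precisely the twist by $\mathfrak d^{-1}$ at ramified places outside $S$. Handling this at the abstract level of abelian groups circumvents the obstacle but yields only a non-canonical isomorphism. An alternative, more natural approach would be to modify the local characters by an idele realizing $\mathfrak d^{-1}$, but no global element achieves this when the different is non-principal, so Tate's product formula cannot be preserved by such a modification.
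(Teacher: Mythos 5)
Your proposal takes a genuinely different route from the paper's. The paper never pairs $R_S$ directly against $V=\prod_{v\in S}K_v$ via Tate's global character: it fixes, at each place, a self-duality of $K_v$ for which $O_v^{\perp}=O_v$ (Proposition \ref{prop:sdk} --- always possible because the \emph{local} different is principal even when the global one is not), uses the global character only to obtain $\hat K\cong \bA_K/K$, and then dualizes the exact sequence $0\to\prod_{v\notin S}O_v\to\hat K\to G_S\to 0$, identifying $\widehat{\prod_{v\notin S}O_v}$ with $\bigoplus_{v\notin S}K_v/O_v\cong K/R_S$ by strong approximation (Proposition \ref{prop:density-for-dir-prod}). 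That choice of local pairings is exactly what makes the inverse-different twist, which your computation of $R_S^{\perp}$ correctly exposes, disappear from the paper's argument. Your overall strategy (discreteness and cocompactness of $R_S$ in $V$, computation of the annihilator lattice, double duality) is sound, and your reduction of the theorem to the single statement ``$R_S^{\perp}\cong R_S$ as abstract abelian groups'' is valid.

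The gap is in the justification of that last statement. You assert that any two nonzero fractional ideals of $R_S$ are isomorphic as abelian groups because both are direct limits of copies of $\Z^{[K:\Q]}$ ``with the same pattern of finite-index transition maps.'' Termwise isomorphic stages and transition maps of equal index do not determine a direct limit: $\varinjlim\bigl(\Z^2,\mathrm{diag}(1,4)\bigr)\cong\Z\oplus\Z[1/2]$ and $\varinjlim\bigl(\Z^2,\mathrm{diag}(2,2)\bigr)\cong\Z[1/2]^2$ have index $4$ at every stage but are not isomorphic (the second is $2$-divisible modulo nothing --- every element is divisible by $2$ --- while the first is not). What you actually need is a single $\Q$-linear automorphism $g$ of $K$ with $g(R_S)=\mathfrak a$, since any abstract isomorphism of these full-rank subgroups of $K\cong\Q^{[K:\Q]}$ extends to one; a multiplication map does the job only when $\mathfrak a$ is principal as an $R_S$-ideal, which fails in general. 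The claim is nevertheless true: $R_S$ and $\mathfrak a$ are everywhere locally isomorphic, and strong approximation for $\SL_n$ over $\Q$ together with the surjectivity of $\det\colon \mathrm{GL}_n(\Q)\to\Q^{\times}$ and the triviality of the class group of $\Z$ glue the local isomorphisms into a global one. So the gap is fillable, but it requires an argument of comparable substance to the rest of the proof, and as written this step is not justified. Alternatively, you can dissolve the obstacle at the source, as the paper does, by decoupling the local self-dualities from the global character.
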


Let us recall some standard facts (see for example \cite{weil:bnt}*{Section II.5}).

\begin{proposition}\label{prop:sdk}
For any place $v$, there exists a self-duality pairing on $K_v$ such that the complement of $O_v$ is identified with $O_v$ itself.
\end{proposition}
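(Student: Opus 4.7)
The plan is to construct the pairing from an additive character in the style of Tate's thesis, namely of the form $\langle x,y\rangle_v = \psi_v(xy)$ for a suitable continuous character $\psi_v \colon K_v \to \bT$. The general principle is that for any nontrivial $\psi_v$, the map $y \mapsto \psi_v(y\,\cdot)$ is a topological isomorphism $K_v \cong \widehat{K_v}$ (this is the classical local self-duality of local fields; see e.g.~\cite{weil:bnt}*{Ch.~II, \S5}), so the pairing is automatically perfect. What remains is to choose $\psi_v$ so that the annihilator $O_v^{\perp}$ under this pairing equals $O_v$ itself.

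For a non-archimedean place $v$ lying over a rational prime $p$, I would begin with the standard Tate character $\psi_p \colon \Q_p \to \bT$ whose kernel is $\Z_p$, and set $\psi'_v(x) = \psi_p(\Tr_{K_v/\Q_p}(x))$. Its conductor (the largest fractional ideal of $K_v$ on which it is trivial) equals the inverse different $\mathfrak{d}_v^{-1}$. Since the fractional ideals of $K_v$ form a cyclic group generated by a uniformizer, I can pick $c \in K_v^{\times}$ with $c\,\mathfrak{d}_v^{-1} = O_v$, and then $\psi_v(x) = \psi'_v(cx)$ has conductor exactly $O_v$. The equality $O_v^{\perp} = O_v$ then drops out by unraveling the definition: $y \in O_v^{\perp}$ iff $\psi_v$ is trivial on $y O_v$, iff $y O_v \subseteq O_v$ (by the choice of conductor), iff $y \in O_v$.

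For an archimedean place the condition on $O_v$ does not really arise (there is no maximal compact subring), so only self-duality is needed, and this is provided by the standard characters $\psi_\R(x) = e^{2\pi i x}$ if $K_v \cong \R$ and $\psi_\bC(z) = e^{2\pi i (z + \bar z)}$ if $K_v \cong \bC$.

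The only step that requires any care is identifying the conductor of $\psi_p \circ \Tr_{K_v/\Q_p}$ with the inverse different and then rescaling by $c$ to move the conductor onto $O_v$; both are classical facts about the different ideal that I would quote rather than redo. I expect no serious obstacle beyond this, since once $\psi_v$ is in hand the verifications of self-duality and of $O_v^{\perp} = O_v$ are routine, and the statement is then ready to be assembled into the global duality $\hat R_S \cong G_S$ of Theorem \ref{thm:dual-of-R-S} by taking the restricted tensor product over the places of $K$.
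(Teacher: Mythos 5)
Your argument is correct and is precisely the standard construction that the paper invokes: Proposition \ref{prop:sdk} is stated there without proof, with a pointer to \cite{weil:bnt}*{Section II.5}, and the content of that reference is exactly your recipe of composing the standard character of $\Q_p$ with the local trace and rescaling by a generator of the different so that the conductor becomes $O_v$ (the archimedean case indeed only requiring self-duality, since $O_v$ is defined only at finite places). Nothing to add or correct.
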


\begin{corollary}\label{cor:dual-of-O-v}
The Pontryagin dual of $O_v$ is isomorphic to $K_v / O_v$.
\end{corollary}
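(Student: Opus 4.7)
The plan is to deduce the corollary directly from the self-duality pairing supplied by Proposition \ref{prop:sdk} by invoking the standard annihilator formulas from Pontryagin duality.

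First, I would recall that given a locally compact abelian group $G$ with Pontryagin dual $\hat G$ and a closed subgroup $H \le G$, the annihilator $H^\perp = \{\chi \in \hat G : \chi|_H \equiv 1\}$ fits into canonical topological group isomorphisms $\widehat{G/H} \cong H^\perp$ and $\hat H \cong \hat G / H^\perp$.

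Next, I would apply this with $G = K_v$, where we use the self-duality pairing from Proposition \ref{prop:sdk} to identify $\hat G$ with $K_v$ itself. Under this identification, the proposition states precisely that the annihilator of $H = O_v$ is $O_v$. The second isomorphism above then gives
\[
\hat O_v \cong \hat G / H^\perp \cong K_v / O_v,
\]
as topological groups, which is the statement of the corollary.

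There is essentially no obstacle: the only nontrivial ingredient is Proposition \ref{prop:sdk}, which has already been cited. The argument amounts to unpacking the annihilator isomorphism from Pontryagin duality and plugging in the self-annihilating property of $O_v$.
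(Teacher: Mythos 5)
Your argument is correct and is exactly the (implicit) argument the paper intends: the corollary is stated without proof as an immediate consequence of Proposition \ref{prop:sdk}, and the missing step is precisely the standard annihilator isomorphism $\hat H \cong \hat G / H^\perp$ applied with $G = K_v \cong \hat K_v$ and $O_v^\perp = O_v$. No gap.
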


It is worth emphasizing that the isomorphism above is not canonical: it dependes on a specific choice of pairing for $K_v$.
We will consider this choice fixed for the rest of the appendix. Having Proposition \ref{prop:sdk}, it is easy to prove the standard self-duality
$\bA_K\cong \hat{\bA}_K$ of the adele ring. Then, considering the extension $K\to\bA_K\to\bA_K/K$ yields the following well-known result: 

\begin{proposition}
The dual group $\hat K$ is isomorphic to $\bA_K / K$.
\end{proposition}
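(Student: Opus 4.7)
The plan is to apply Pontryagin duality to the short exact sequence
\[
0 \to K \to \bA_K \to \bA_K/K \to 0,
\]
where $K$ sits discretely and cocompactly inside $\bA_K$. Dualizing yields
\[
0 \to (\bA_K/K)^\wedge \to \hat{\bA}_K \to \hat{K} \to 0,
\]
in which $(\bA_K/K)^\wedge$ is realized as the annihilator $K^\perp$ of $K$ inside $\hat{\bA}_K$. Using the self-duality $\bA_K \cong \hat{\bA}_K$ (which the excerpt derives from Proposition \ref{prop:sdk}), the task reduces to identifying $K^\perp = K$ under this pairing, since that immediately gives $\hat{K} \cong \hat{\bA}_K / K^\perp \cong \bA_K / K$.

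To fix a self-duality pairing on $\bA_K$ that is well-adapted to $K$, I would set $\chi = \psi \circ \Tr_{K/\Q}$, where $\psi$ is the canonical additive character on $\bA_\Q$ trivial on $\Q$, given locally by $\psi_\infty(x) = e^{-2\pi i x}$ and $\psi_p(x) = e^{2\pi i \{x\}_p}$ with $\{\cdot\}_p$ the $p$-fractional part. The inclusion $K \subset K^\perp$ is then immediate because $\Tr_{K/\Q}$ sends $K$ into $\Q$ and $\psi$ vanishes on $\Q$.

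For the reverse inclusion, observe that $K^\perp$ is discrete in $\bA_K$ (being the dual of the compact group $\bA_K/K$) and contains $K$, so $K^\perp / K$ is a discrete subgroup of the compact group $\bA_K/K$, hence finite. A covolume comparison — using the standard formula $\mathrm{vol}(\bA_K/H) \cdot \mathrm{vol}(\hat{\bA}_K/H^\perp) = 1$ applied once to $H = K$ and once to $H = K^\perp$ under the $\chi$-self-dual Haar measure — then forces the index $[K^\perp : K]$ to equal $1$.

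I expect the main obstacle to be pinning down the Haar measure normalizations so that the covolume argument yields the exact equality $[K^\perp:K]=1$ rather than merely finiteness; this is the only step that is not purely formal. A conceptually cleaner alternative would be to cite Poisson summation on $\bA_K$ (which is essentially equivalent to the desired self-annihilation of $K$) or to refer to the classical treatment, e.g.\ in Weil's \emph{Basic Number Theory}.
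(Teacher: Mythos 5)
Your route is the same as the paper's: the paper obtains this proposition by dualizing the extension $K \to \bA_K \to \bA_K/K$ against the self-duality $\bA_K \cong \hat{\bA}_K$, exactly as you propose, and treats the identification $K^\perp = K$ as a well-known fact. The skeleton is therefore fine, but the step you yourself flag as delicate is where your argument, as written, actually breaks: applying $\mathrm{vol}(\bA_K/H)\cdot\mathrm{vol}(\hat{\bA}_K/H^\perp)=1$ to $H=K$ and to $H=K^\perp$ yields the \emph{same} relation (since $K^{\perp\perp}=K$ by Pontryagin biduality), namely $\mathrm{vol}(\bA_K/K)\cdot\mathrm{vol}(\bA_K/K^\perp)=1$, so you only get $[K^\perp:K]=\mathrm{vol}(\bA_K/K)^2$; closing the loop requires the independent input $\mathrm{vol}(\bA_K/K)=1$, which in the standard treatments is \emph{deduced from} $K^\perp=K$ rather than used to prove it. The repair is cheap and uses only what you already have: with $\chi=\psi\circ\Tr_{K/\Q}$ the pairing satisfies $\chi((ax)y)=\chi(x(ay))$ for $a\in K$, so $K^\perp$ is a $K$-vector subspace of $\bA_K$ containing $K$; if it contained some $x\notin K$, then $(Kx+K)/K\cong Kx/(Kx\cap K)\cong K$ would be an infinite subgroup of $K^\perp/K$, contradicting the finiteness you correctly established from discreteness of $K^\perp$ and compactness of $\bA_K/K$. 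Hence $K^\perp=K$ with no measure normalization needed, and the rest of your dualization argument goes through.
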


\begin{proposition}[\emph{strong approximation}, cf.~\cite{MR0861410}*{Theorem 10.4.1}]\label{prop:str-approx}
Let $V$ be a set of places, and suppose that there is some place $v_0$ such that $v_0 \not\in V$.
Then the diagonal inclusion of $K$ into the restricted product $\prod_{v \in V} (K_v, O_v)$ is dense.
\end{proposition}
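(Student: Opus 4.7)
The plan is to reduce to the standard statement that $K$ is dense in the $v_0$-deleted adeles $\bA_K^{(v_0)} = \prod_{v \neq v_0}'(K_v, O_v)$, and then deduce that density via the Pontryagin duality results already set up in the appendix.

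First I would handle the reduction to the full deleted-adele case. Given an arbitrary $V$ with $v_0 \notin V$, an arbitrary $(a_v)_{v \in V}$ in the $V$-restricted product, and an arbitrary basic open neighborhood $\prod_{v \in V} U_v$ with $U_v = O_v$ for almost all $v \in V$, extend $(a_v)_{v \in V}$ to an element $\tilde a \in \bA_K^{(v_0)}$ by setting $\tilde a_v = 0 \in O_v$ for $v \notin V \cup \{v_0\}$. The set $\tilde U = \prod_{v \in V} U_v \times \prod_{v \notin V \cup \{v_0\}} O_v$ is an open neighborhood of $\tilde a$ in $\bA_K^{(v_0)}$, so any $x \in K$ approximating $\tilde a$ in $\tilde U$ automatically lies in $\prod_{v \in V} U_v$.

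Next I would prove density of $K$ in $\bA_K^{(v_0)}$. Writing $\bA_K = K_{v_0} \times \bA_K^{(v_0)}$, let $H$ denote the closure of $K$ inside $\bA_K^{(v_0)}$; then $K_{v_0} \times H$ is closed in $\bA_K$ and contains the diagonal copy of $K$, so its image in the compact group $\bA_K/K$ is closed. Since this image contains the image of $K_{v_0}$, it suffices to show that the image of $K_{v_0}$ in $\bA_K/K$ is dense, which by Pontryagin duality on a compact abelian group is equivalent to showing its annihilator in $\widehat{\bA_K/K}$ is trivial. Using the identification $\widehat{\bA_K/K} \cong K$ recalled just above the statement, and the self-duality pairing of $\bA_K$, an element $a \in K$ annihilates the image of $K_{v_0}$ precisely when the local character $\chi_{v_0}(a_{v_0} \, \cdot \,)$ is trivial on $K_{v_0}$. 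By the self-duality of $K_{v_0}$ (Proposition \ref{prop:sdk}) this forces $a_{v_0} = 0$, but $K \hookrightarrow K_{v_0}$ is injective as a nonzero field homomorphism, hence $a = 0$.

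I expect the hardest part to be nothing more than bookkeeping: making sure the ``extend by zero'' map is actually continuous into the correct restricted product topology, and that the identification of $\widehat{\bA_K/K}$ with $K$ is compatible with the decomposition $\bA_K = K_{v_0} \times \bA_K^{(v_0)}$ in the way I used above. Both are routine once one checks that the basic opens of $\bA_K^{(v_0)}$ of the form $\prod_{v \in V} U_v \times \prod_{v \notin V \cup \{v_0\}} O_v$ are indeed open (only finitely many $U_v$ differ from $O_v$) and that the self-duality pairing $\chi(xy) = \prod_v \chi_v(x_v y_v)$ degenerates cleanly on the $v_0$-component.
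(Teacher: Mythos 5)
Your proof is correct. Note, however, that the paper does not actually prove this proposition: it is stated as a known result with a pointer to Cassels (Theorem 10.4.1), so there is no internal argument to compare against. What you have written is essentially the classical duality-theoretic proof of strong approximation (the one in Weil's \emph{Basic Number Theory} and in Cassels--Fr\"ohlich): reduce to the $v_0$-deleted adeles, observe that the closure $H$ of $K$ gives a closed subgroup $K_{v_0}\times H$ of $\bA_K$ containing the diagonal $K$, and then kill the annihilator of the image of $K_{v_0}$ in the compact group $\bA_K/K$ using the identification $\widehat{\bA_K/K}\cong K$ and nondegeneracy of the local pairing at $v_0$. Both steps check out: the ``extend by zero'' reduction is sound because the sets $\prod_{v\in V}U_v\times\prod_{v\notin V\cup\{v_0\}}O_v$ are open (only finitely many factors differ from $O_v$), and the annihilator computation correctly isolates the $v_0$-component of the product character. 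This route fits the appendix particularly well, since the two inputs you need (the self-duality of $K_v$ with $O_v^\perp=O_v$, and $\hat K\cong\bA_K/K$) are exactly the propositions stated immediately before this one; the only point to make explicit is that the identification $\widehat{\bA_K/K}\cong K$, i.e.\ $K^\perp=K$ inside the self-dual $\bA_K$, must be (and in the standard references is) established without appeal to strong approximation, so there is no circularity in the paper's ordering. The trade-off versus simply citing Cassels is that your argument is genuinely self-contained relative to the appendix, at the cost of a page of bookkeeping the authors chose to outsource.
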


The next proposition is the key to our proof.

\begin{proposition}\label{prop:density-for-dir-prod}
 There is an isomorphism 
 \[ 
      K/R_S \to \bigoplus_{v \not\in S} K_v / O_v, \quad
      [a]_{K/R_S} \mapsto ([a]_{K_v / O_v})_{v \not \in S}.
 \] 
\end{proposition}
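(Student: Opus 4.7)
The plan is to verify well-definedness (including the fact that the image lies in the direct sum rather than the direct product), injectivity, and surjectivity, with only the last step requiring nontrivial input, namely strong approximation.

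For well-definedness, observe that $a \in R_S$ means $\absv{a}_v \le 1$ for every $v \notin S$, equivalently $a \in O_v$, so the diagonal map $K \to \prod_{v \notin S} K_v/O_v$ sends $R_S$ to zero and hence descends to $K/R_S$. For any $a \in K$ one has $\absv{a}_v \le 1$ for all but finitely many $v$ (only finitely many prime ideals divide the denominator of $a$), so the image has finite support and lands in $\bigoplus_{v \notin S} K_v/O_v$. Injectivity is equally immediate: if $[a]$ maps to zero then $a \in O_v$ for every $v \notin S$, i.e., $a \in R_S$ by definition.

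The only substantive step is surjectivity. I would pick any $v_0 \in S$, which is possible because $S \supseteq P^K_\infty$. Given a target element, choose lifts $b_v \in K_v$ of its finitely many nonzero components, supported on a finite set $V \subset P^K \smallsetminus S$; in particular $v_0 \notin V$. The set
\[
U = \prod_{v \in V}(b_v + O_v) \times \prod_{v \notin V \cup \{v_0\}} O_v
\]
is a basic open neighborhood in the restricted product $\prod_{v \ne v_0}(K_v, O_v)$. By Proposition \ref{prop:str-approx}, $K$ is dense there, so one can select $a \in K \cap U$. By construction $[a]_{K_v/O_v} = [b_v]$ for $v \in V$ and $[a]_{K_v/O_v} = 0$ for every other $v \notin S$, producing the required preimage; the conditions imposed on $a$ at the places of $S \smallsetminus \{v_0\}$ are irrelevant since they do not appear in the target. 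The main point is simply to identify the correct basic open neighborhood in the restricted product so that strong approximation applies directly; no deeper obstacle appears.
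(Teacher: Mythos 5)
Your argument is correct and follows essentially the same route as the paper: well-definedness and injectivity are immediate from the definition of $R_S$, and surjectivity is obtained from strong approximation by intersecting $K$ with a basic open neighbourhood of a lift of the target element. The only cosmetic difference is that the paper applies Proposition \ref{prop:str-approx} directly to the restricted product over $v \notin S$ (any $v_0 \in S$ serving as the omitted place), which sidesteps the need to write $O_v$ at the archimedean places of $S \smallsetminus \{v_0\}$ appearing in your neighbourhood $U$ — as you note, the conditions there are irrelevant anyway.
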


\begin{proof}
Let us first check that this is  well-defined.
For any given $a \in K$, we have $\absv{a}_v \le 1$, hence $[a]_{K_v / O_v} = 0$, except for finitely many places.
This means that $([a]_{K_v / O_v})_{v \not \in S}$ indeed defines an element of $\bigoplus_{v \not\in S} K_v / O_v$.

We then claim that this is a surjective homomorphism.
Take an element $([x_v]_{K_v / O_v})_v \in \bigoplus_{v \not\in S} K_v / O_v$, so that $x_v \in O_v$ except for finitely many $v$.
In particular, $(x_v)_v$ is an element in the restricted product $\prod_{v \not\in S} (K_v, O_v)$.
By Proposition \ref{prop:str-approx}, the open neighborhood $K + \prod_v O_v$ of $K$ in $\prod_{v \not\in S} (K_v, O_v)$ contains $(x_v)_v$.
This implies that $([x_v]_{K_v / O_v})_v$ is in the image of $K$.

Finally, we check that the above homomorphism is injective.
Take an element $a \in K$ that goes to $0 \in \bigoplus_{v \not\in S} K_v / O_v$.
This means that $\absv{a}_v \le 1$ for $v \not\in S$, which is the defining condition for $R_S$, hence $[a]_{K / R_S} = 0$.
\end{proof}

\begin{proposition}
The inclusion $\bA_{K, S} \to \bA_K$ induces an isomorphism $\bA_{K,S} / R_S \cong \bA_K / K$.
\end{proposition}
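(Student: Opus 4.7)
The plan is to show that the composition $\bA_{K,S} \hookrightarrow \bA_K \to \bA_K/K$ is surjective with kernel exactly the diagonal image of $R_S$. Factoring through the quotient then gives the claimed isomorphism.

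For the kernel, I would argue directly from the definitions. An element $b \in \bA_{K,S}$ lies in the kernel if and only if, as an element of $\bA_K$, it belongs to the diagonal image of $K$. So $b$ corresponds to some $a \in K$, and the condition $b \in \bA_{K,S}$ forces $\absv{a}_v \le 1$ for every finite place $v \notin S$. This is precisely the defining condition for $R_S$, so the kernel is $R_S$.

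For surjectivity, I would use Proposition \ref{prop:density-for-dir-prod} as the main input. Given $a = (a_v) \in \bA_K$, by the restricted product definition we have $a_v \in O_v$ for all but finitely many places, so $([a_v]_{K_v/O_v})_{v \notin S}$ defines an element of $\bigoplus_{v \notin S} K_v/O_v$. By Proposition \ref{prop:density-for-dir-prod}, this element lies in the image of $K/R_S$, hence there exists $c \in K$ with $a_v - c \in O_v$ for every $v \notin S$. Then $a - c \in \bA_{K,S}$: at places in $S$ and at infinite places the components lie in $K_v$ tautologically, while at finite places outside $S$ the difference lies in $O_v$ by construction. Hence the class $[a]$ in $\bA_K/K$ agrees with the image of $a - c \in \bA_{K,S}$.

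There is no real obstacle here: the strong approximation theorem is doing all the heavy lifting, and it has already been packaged into Proposition \ref{prop:density-for-dir-prod}. The only thing to double-check is that the map is well defined on $\bA_{K,S}/R_S$, which is automatic since $R_S \subset \bA_{K,S}$ and sits inside $K$, so the composition sends $R_S$ to $0$ in $\bA_K/K$.
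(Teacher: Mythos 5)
Your argument is correct and follows essentially the same route as the paper: injectivity via the identity $R_S = K \cap \bA_{K,S}$, and surjectivity by identifying $\bA_K/\bA_{K,S}$ with $\bigoplus_{v \notin S} K_v/O_v$ and invoking Proposition \ref{prop:density-for-dir-prod}. Your write-up just makes the same steps slightly more explicit.
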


\begin{proof}
Since $R_S = K \cap \bA_{K, S}$, the induced map $\bA_{K,S} / R_S \to \bA_K / K$ is injective.
Let us check that it is also surjective, or equivalently, $\bA_K = K + \bA_{K, S}$.
Observe that $\bA_K / \bA_{K, S}$ is isomorphic to $\bigoplus_{v \not\in S} K_v / O_v$.
Then Proposition \ref{prop:density-for-dir-prod} implies the claim.
\end{proof}

\begin{corollary}
There is an exact sequence
\begin{equation}\label{eq:ex-seq-cpt-grps}
0 \to \prod_{v \not\in S} O_v \to \hat K \to G_S \to 0.
\end{equation}
\end{corollary}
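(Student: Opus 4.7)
The plan is to combine the identification $\hat K \cong \bA_{K,S}/R_S$ (obtained by stringing together the two preceding propositions) with the obvious product decomposition of $\bA_{K,S}$.

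First, recall that by definition $\bA_{K,S} = \prod_{v \in S} K_v \times \prod_{v \not\in S} O_v$, so the projection $p\colon \bA_{K,S} \to \prod_{v\in S} K_v$ onto the $S$-factors is a continuous surjective homomorphism with kernel exactly $\prod_{v \not\in S} O_v$. Since the diagonal image of $R_S$ in $\bA_{K,S}$ projects under $p$ into the diagonal image of $R_S$ in $\prod_{v\in S} K_v$, the map $p$ descends to a continuous surjection
\[
\bar p \colon \bA_{K,S}/R_S \longrightarrow \Bigl(\prod_{v \in S} K_v\Bigr) / R_S = G_S.
\]

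Next, I would identify the kernel of $\bar p$ with $\prod_{v \not\in S} O_v$. Surjectivity of $\prod_{v \not\in S} O_v \to \ker \bar p$ is immediate: if $[(x,y)] \in \bA_{K,S}/R_S$ lies in $\ker \bar p$, then $x$ represents an element of $R_S$ in $\prod_{v \in S} K_v$, and subtracting the corresponding diagonal element of $R_S$ from $(x,y)$ gives a representative of the form $(0, y')$ with $y' \in \prod_{v \not\in S} O_v$. For injectivity, suppose $(0, y')$ lies in the diagonal copy of $R_S$; then the corresponding $r \in R_S$ satisfies $r = 0$ at every place in $S$, and since $K \hookrightarrow K_v$ is injective this forces $r = 0$, hence $y' = 0$. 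Combining this with $\hat K \cong \bA_{K,S}/R_S$ yields the exact sequence \eqref{eq:ex-seq-cpt-grps}.

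There is no real obstacle here; the content was already absorbed by the preceding proposition (and ultimately by strong approximation through Proposition \ref{prop:density-for-dir-prod}). The only subtlety worth flagging is the slightly unusual splitting of roles between $\prod_{v \in S}K_v$ and $\prod_{v \not\in S}O_v$: modding $\bA_{K,S}$ by the diagonal $R_S$ kills the first factor down to the compact quotient $G_S$ while leaving the second factor intact, which is exactly what the exact sequence records.
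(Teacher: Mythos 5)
Your argument is correct and is exactly the route the paper intends: the corollary is stated without proof as an immediate consequence of the isomorphism $\hat K \cong \bA_K/K \cong \bA_{K,S}/R_S$ together with the product decomposition $\bA_{K,S} = \prod_{v\in S} K_v \times \prod_{v\not\in S} O_v$, and your identification of the kernel of the induced projection is the straightforward verification the paper leaves to the reader.
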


\begin{proof}[Proof of Theorem \ref{thm:dual-of-R-S}]
Taking the dual groups of the terms in the exact sequence \eqref{eq:ex-seq-cpt-grps}, it is enough to check that the dual of $\prod_{v \not\in S} O_v$ is isomorphic to $K / R_S$.
By Corollary \ref{cor:dual-of-O-v}, this dual is isomorphic to $\bigoplus_{v \not\in S} K_v / O_v$.
Again Proposition \ref{prop:density-for-dir-prod} implies the claim.
\end{proof}


\raggedright

\begin{bibdiv}
\begin{biblist}

\bib{MR3722566}{article}{
      author={Amini, Massoud},
      author={Putnam, Ian~F.},
      author={Saeidi~Gholikandi, Sarah},
       title={Homology for one-dimensional solenoids},
        date={2017},
        ISSN={0025-5521},
     journal={Math. Scand.},
      volume={121},
      number={2},
       pages={219\ndash 242},
         url={https://doi.org/10.7146/math.scand.a-26265},
}

\bib{atiyah:vecell}{article}{
 Author = {Atiyah, Michael F.},
 Title = {Vector bundles over an elliptic curve},
 FJournal = {Proceedings of the London Mathematical Society. Third Series},
 Journal = {Proc. Lond. Math. Soc. (3)},
 ISSN = {0024-6115},
 number = {7},
 Pages = {414--452},
 Year = {1957},
 Language = {English},
 DOI = {10.1112/plms/s3-7.1.414},
}

\bib{black:kth}{book}{
    AUTHOR = {Blackadar, Bruce},
     TITLE = {{$K$}-theory for operator algebras},
    SERIES = {Mathematical Sciences Research Institute Publications},
    VOLUME = {5},
 PUBLISHER = {Cambridge University Press, Cambridge},
      year = {1998},
     PAGES = {xx+300},
      ISBN = {0-521-63532-2},
}

\bib{bdgw:matui}{article}{
 Author = {B{\"o}nicke, Christian},
 Author ={Dell'Aiera, Cl{\'e}ment},
 Author ={Gabe, James},
 Author ={Willett, Rufus},
 Title = {Dynamic asymptotic dimension and {Matui}'s {HK} conjecture},
 FJournal = {Proceedings of the London Mathematical Society. Third Series},
 Journal = {Proc. Lond. Math. Soc.},
 ISSN = {0024-6115},
 Volume = {126},
 Number = {4},
 Pages = {1182--1253},
 Year = {2023},
 Language = {English},
 DOI = {10.1112/plms.12510},
}

\bib{val:kthpgrp}{article}{
      author={B\"{o}nicke, Christian},
      author={Proietti, Valerio},
       title={Categorical approach to the {B}aum--{C}onnes conjecture for
  \'{e}tale groupoids},
      journal = {Journal of the Institute of Mathematics of Jussieu},
        date={2024-01-02},
         url={https://doi.org/10.1017/S1474748023000531},
         doi={10.1017/S1474748023000531},
	pages={1--46},
	note= {Published Online},
}

\bib{MR1963683}{book}{
      author={Brin, Michael},
      author={Stuck, Garrett},
       title={Introduction to dynamical systems},
   publisher={Cambridge University Press, Cambridge},
        date={2002},
        ISBN={0-521-80841-3},
         url={https://doi.org/10.1017/CBO9780511755316},
         doi={10.1017/CBO9780511755316},
}

\bib{MR1324339}{book}{
      author={Brown, Kenneth~S.},
       title={Cohomology of groups},
      series={Graduate Texts in Mathematics},
   publisher={Springer-Verlag},
     address={New York},
        date={1994},
      volume={87},
        ISBN={0-387-90688-6},
        note={Corrected reprint of the 1982 original},
}

\bib{buput:ntsol}{article}{
    AUTHOR = {Burke, Nigel D.},
    author ={Putnam, Ian F.},
     TITLE = {Markov partitions and homology for {$n/m$}-solenoids},
   JOURNAL = {Ergodic Theory Dynam. Systems},
  FJOURNAL = {Ergodic Theory and Dynamical Systems},
    VOLUME = {37},
    NUMBER = {3},
     YEAR = {2017},
     PAGES = {716--738},
      ISSN = {0143-3857,1469-4417},
       DOI = {10.1017/etds.2015.71},
       URL = {https://doi.org/10.1017/etds.2015.71},
}

\bib{MR0861410}{book}{
      author={Cassels, J. W.~S.},
       title={Local fields},
      series={London Mathematical Society Student Texts},
   publisher={Cambridge University Press, Cambridge},
        date={1986},
      volume={3},
        ISBN={0-521-30484-9; 0-521-31525-5},
         url={https://doi.org/10.1017/CBO9781139171885},
         doi={10.1017/CBO9781139171885},
}

\bib{deemany:flat}{article}{
      author={Chaiser, Rachel},
      author={Coates-Welsh, Maeve},
      author={Deeley, Robin~J.},
      author={Farhner, Annika},
      author={Giornozi, Jamal},
      author={Huq, Robi},
      author={Lorenzo, Levi},
      author={Oyola-Cortes, Jos\'{e}},
      author={Reardon, Maggie},
      author={Stocker, Andrew~M.},
       title={Invariants for the {S}male space associated to an expanding
  endomorphism of a flat manifold},
        date={2023},
        ISSN={1867-5778,1867-5786},
     journal={M\"{u}nster J. Math.},
      volume={16},
      number={1},
       pages={177\ndash 199},
}

\bib{MR605351}{article}{
      author={Connes, Alain},
       title={An analogue of the {T}hom isomorphism for crossed products of a
  {$C\sp{\ast} $}-algebra by an action of {${\bf R}$}},
        date={1981},
        ISSN={0001-8708},
     journal={Adv. in Math.},
      volume={39},
      number={1},
       pages={31\ndash 55},
}

\bib{cramo:hom}{article}{
      author={Crainic, Marius},
      author={Moerdijk, Ieke},
       title={A homology theory for {\'e}tale groupoids},
        date={2000},
        ISSN={0075-4102},
     journal={J. Reine Angew. Math.},
      volume={521},
       pages={25\ndash 46},
         url={http://dx.doi.org/10.1515/crll.2000.029},
         doi={10.1515/crll.2000.029},
}

\bib{deeley:hk}{article}{
      author={Deeley, Robin~J.},
       title={A counterexample to the {HK}-conjecture that is principal},
        date={2023},
        ISSN={0143-3857},
     journal={Ergodic Theory Dynam. Systems},
      volume={43},
      number={6},
       pages={1829\ndash 1846},
         url={https://doi.org/10.1017/etds.2022.25},
         doi={10.1017/etds.2022.25},
}

\bib{dkw:dyn}{article}{
      author={Deeley, Robin~J.},
      author={Killough, D.~Brady},
      author={Whittaker, Michael~F.},
       title={Dynamical correspondences for {S}male spaces},
        date={2016},
        ISSN={1076-9803},
     journal={New York J. Math.},
      volume={22},
       pages={943\ndash 988},
         url={http://nyjm.albany.edu:8000/j/2016/22_943.html},
}

\bib{MR2808264}{article}{
      author={Dekimpe, Karel},
       title={What is {$\ldots$}\, an infra-nilmanifold endomorphism?},
        date={2011},
        ISSN={0002-9920},
     journal={Notices Amer. Math. Soc.},
      volume={58},
      number={5},
       pages={688\ndash 689},
}

\bib{MR0482697}{book}{
      author={Engelking, Ryszard},
       title={Dimension theory},
      series={North-Holland Mathematical Library},
   publisher={North-Holland Publishing Co., Amsterdam-Oxford-New York;
  PWN---Polish Scientific Publishers, Warsaw},
        date={1978},
      volume={19},
        ISBN={0-444-85176-3},
         url={https://mathscinet.ams.org/mathscinet-getitem?mr=0482697},
        note={Translated from the Polish and revised by the author},
}

\bib{ejlo:balian}{article}{
 Author = {Enstad, Ulrik },
 Author = {Jakobsen, Mads S.},
 Author = {Luef, Franz},
 Author = {Omland, Tron},
 Title = {Deformations and {Balian}-{Low} theorems for {Gabor} frames on the adeles},
 Journal = {Advances in Mathematics},
 ISSN = {0001-8708},
 Volume = {410 B},
 Pages = {46},
 Note = {Id/No 108771},
 Year = {2022},
 Language = {English},
 DOI = {10.1016/j.aim.2022.108771},
}

\bib{skandalis:fack}{article}{
      author={Fack, Thierry},
      author={Skandalis, Georges},
       title={Connes' analogue of the {T}hom isomorphism for the {K}asparov
  groups},
        date={1981},
        ISSN={0020-9910},
     journal={Invent. Math.},
      volume={64},
      number={1},
       pages={7\ndash 14},
         url={http://dx.doi.org/10.1007/BF01393931},
         doi={10.1007/BF01393931},
}

\bib{fkps:hk}{article}{
 Author = {Farsi, Carla},
 Author ={Kumjian, Alex},
 Author ={Pask, David},
 Author ={Sims, Aidan},
 Title = {Ample groupoids: equivalence, homology, and {Matui}'s {HK} conjecture},
 FJournal = {M{\"u}nster Journal of Mathematics},
 Journal = {M{\"u}nster J. Math.},
 ISSN = {1867-5778},
 Volume = {12},
 Number = {2},
 Pages = {411--451},
 Year = {2019},
 Language = {English},
 DOI = {10.17879/53149724091},
}

\bib{MR1950475}{book}{
      author={Gelfand, Sergei~I.},
      author={Manin, Yuri~I.},
       title={Methods of homological algebra},
     edition={Second},
      series={Springer Monographs in Mathematics},
   publisher={Springer-Verlag, Berlin},
        date={2003},
        ISBN={3-540-43583-2},
         url={https://doi.org/10.1007/978-3-662-12492-5},
         doi={10.1007/978-3-662-12492-5},
      review={\MR{1950475}},
}

\bib{MR0345092}{book}{
      author={Godement, Roger},
       title={Topologie alg\'{e}brique et th\'{e}orie des faisceaux},
   publisher={Hermann, Paris},
        date={1973},
        note={Troisi\`eme \'{e}dition revue et corrig\'{e}e, Publications de
  l'Institut de Math\'{e}matique de l'Universit\'{e} de Strasbourg, XIII,
  Actualit\'{e}s Scientifiques et Industrielles, No. 1252},
}

\bib{goja:simp}{book}{
      author={Goerss, Paul~G.},
      author={Jardine, John~F.},
       title={Simplicial homotopy theory},
      series={Modern Birkh\"auser Classics},
   publisher={Birkh\"auser Verlag, Basel},
        date={2009},
        ISBN={978-3-0346-0188-7},
         url={https://doi.org/10.1007/978-3-0346-0189-4},
        note={Reprint of the 1999 edition},
}

\bib{hatcher:alg}{book}{
      author={Hatcher, Allen},
       title={Algebraic topology},
   publisher={Cambridge University Press, Cambridge},
        date={2002},
        ISBN={0-521-79160-X; 0-521-79540-0},
}

\bib{higkas:bc}{article}{
      author={Higson, Nigel},
      author={Kasparov, Gennadi},
       title={{$E$}-theory and {$KK$}-theory for groups which act properly and
  isometrically on {H}ilbert space},
        date={2001},
        ISSN={0020-9910},
     journal={Invent. Math.},
      volume={144},
      number={1},
       pages={23\ndash 74},
         url={http://dx.doi.org/10.1007/s002220000118},
         doi={10.1007/s002220000118},
}

\bib{MR0271991}{incollection}{
      author={Hirsch, Morris~W.},
      author={Pugh, Charles~C.},
       title={Stable manifolds and hyperbolic sets},
        date={1970},
   booktitle={Global {A}nalysis ({P}roc. {S}ympos. {P}ure {M}ath., {V}ols.
  {XIV}, {XV}, {XVI}, {B}erkeley, {C}alif., 1968)},
      series={Proc. Sympos. Pure Math.},
      volume={XIV-XVI},
   publisher={Amer. Math. Soc., Providence, RI},
       pages={133\ndash 163},
      review={\MR{271991}},
}

\bib{MR1249482}{book}{
      author={Husemoller, Dale},
       title={Fibre bundles},
     edition={Third},
      series={Graduate Texts in Mathematics},
   publisher={Springer-Verlag, New York},
        date={1994},
      volume={20},
        ISBN={0-387-94087-1},
         url={https://doi.org/10.1007/978-1-4757-2261-1},
         doi={10.1007/978-1-4757-2261-1},
      review={\MR{1249482}},
}

\bib{kascha:shman}{book}{
      author={Kashiwara, Masaki},
      author={Schapira, Pierre},
       title={Sheaves on manifolds},
      series={Grundlehren der Mathematischen Wissenschaften [Fundamental
  Principles of Mathematical Sciences]},
   publisher={Springer-Verlag, Berlin},
        date={1994},
      volume={292},
        ISBN={3-540-51861-4},
        note={With a chapter in French by Christian Houzel, Corrected reprint
  of the 1990 original},
}

\bib{kas:descent}{article}{,
    author    = {Kasparov, G. G.},
    title     = {Equivariant {$KK$}-theory and the {N}ovikov conjecture},
    volume    = {91},
    pages     = {147-201},
    year      = {1988},
    journal   = {Inventiones Mathematicae},
}

\bib{MR1503108}{article}{
      author={Latimer, Claiborne~G.},
      author={MacDuffee, C.~C.},
       title={A correspondence between classes of ideals and classes of
  matrices},
        date={1933},
        ISSN={0003-486X,1939-8980},
     journal={Ann. of Math.},
      volume={34},
      number={2},
       pages={313\ndash 316},
         url={https://doi.org/10.2307/1968204},
         doi={10.2307/1968204},
      review={\MR{1503108}},
}

\bib{matui:hklon}{article}{,
 Author = {Matui, Hiroki},
 Title = {{Homology and topological full groups of \'etale groupoids on totally disconnected spaces.}},
 FJournal = {{Proceedings of the London Mathematical Society}},
 Journal = {{Proc. Lond. Math. Soc.}},
 ISSN = {0024-6115; 1460-244X/e},
 Volume = {104},
 Number = {1},
 Pages = {27--56},
 Year = {2012},
 Publisher = {John Wiley \& Sons, Chichester; London Mathematical Society, London},
 Language = {English},
}

\bib{MR3552533}{article}{
      author={Matui, Hiroki},
       title={\'{E}tale groupoids arising from products of shifts of finite
  type},
        date={2016},
        ISSN={0001-8708},
     journal={Adv. Math.},
      volume={303},
       pages={502\ndash 548},
         url={https://doi.org/10.1016/j.aim.2016.08.023},
         doi={10.1016/j.aim.2016.08.023},
}

\bib{MR3837599}{incollection}{
      author={Matui, Hiroki},
       title={Topological full groups of \'{e}tale groupoids},
        date={2017},
   booktitle={Operator algebras and applications---the {A}bel {S}ymposium
  2015},
      series={Abel Symp.},
      volume={12},
   publisher={Springer},
       pages={203\ndash 230},
}

\bib{murewi:morita}{article}{,
	author = {Muhly, Paul S. and Renault, Jean N. and Williams, Dana P.},
	fjournal = {Journal of Operator Theory},
	issn = {0379-4024},
	journal = {J. Operator Theory},
	mrclass = {46L55 (22D25)},
	mrnumber = {873460},
	mrreviewer = {Jonathan M. Rosenberg},
	number = {1},
	pages = {3--22},
	title = {Equivalence and isomorphism for groupoid {$C^\ast$}-algebras},
	volume = {17},
	year = {1987}}

\bib{MR2162164}{book}{
      author={Nekrashevych, Volodymyr},
       title={Self-similar groups},
      series={Mathematical Surveys and Monographs},
   publisher={American Mathematical Society, Providence, RI},
        date={2005},
      volume={117},
        ISBN={0-8218-3831-8},
         url={https://doi.org/10.1090/surv/117},
         doi={10.1090/surv/117},
      review={\MR{2162164}},
}

\bib{MR2526786}{article}{
      author={Nekrashevych, Volodymyr},
       title={{$C^*$}-algebras and self-similar groups},
        date={2009},
        ISSN={0075-4102},
     journal={J. Reine Angew. Math.},
      volume={630},
       pages={59\ndash 123},
         url={https://mathscinet.ams.org/mathscinet-getitem?mr=2526786},
         doi={10.1515/CRELLE.2009.035},
      review={\MR{2526786}},
}

\bib{val:shi}{article}{
      author={Nishikawa, Shintaro},
      author={Proietti, Valerio},
       title={Groups with {S}panier-{W}hitehead duality},
        date={2020},
        ISSN={2379-1683,2379-1691},
     journal={Ann. K-Theory},
      volume={5},
      number={3},
       pages={465\ndash 500},
         url={https://doi.org/10.2140/akt.2020.5.465},
         doi={10.2140/akt.2020.5.465},
      review={\MR{4132744}},
}

\bib{val:smale}{article}{
      author={Proietti, Valerio},
       title={A note on homology for {S}male spaces},
        date={2020},
        ISSN={1661-7207},
     journal={Groups Geom. Dyn.},
      volume={14},
      number={3},
       pages={813\ndash 836},
         url={https://doi-org.ezproxy.uio.no/10.4171/ggd/564},
         doi={10.4171/ggd/564},
      review={\MR{4167022}},
}

\bib{valmak:groupoid}{article}{
      author={Proietti, V.},
      author={Yamashita, M.},
      title={{Homology and K-theory of dynamical systems I. Torsion-free ample
  groupoids}},
      journal={Ergodic Theory and Dynamical Systems},
      year={2022},
      pages={2630-2660},
      volume={42},
      number={8},
      doi={10.1017/etds.2021.50},

      label={PY22a}
}

\bib{valmak:groupoidtwo}{article}{
      author={Proietti, Valerio},
      author={Yamashita, Makoto},
       title={Homology and {$K$}-theory of dynamical systems {II}. {S}male
  spaces with totally disconnected transversal},
        date={2023},
        ISSN={1661-6952,1661-6960},
     journal={J. Noncommut. Geom.},
      volume={17},
      number={3},
       pages={957\ndash 998},

         url={https://doi.org/10.4171/jncg/494},
         doi={10.4171/jncg/494},
      review={\MR{4626307}},
}

\bib{valmak:threesmale}{misc}{
      author={Proietti, V.},
      author={Yamashita, M.},
      title={{Homology and K-theory of dynamical systems. III. Beyond stably disconnected Smale spaces}},
      year={2022},
      eprint={\href{https://arxiv.org/abs/2207.03118}{\texttt{arXiv:2207.03118 [math.KT]}}},
      label={PY22b}
}

\bib{put:algSmale}{article}{
      author={Putnam, Ian~F.},
       title={{$C^*$}-algebras from {S}male spaces},
        date={1996},
        ISSN={0008-414X},
     journal={Canad. J. Math.},
      volume={48},
      number={1},
       pages={175\ndash 195},
         url={https://doi.org/10.4153/CJM-1996-008-2},
         doi={10.4153/CJM-1996-008-2},
      review={\MR{1382481}},
}

\bib{MR1794291}{article}{
      author={Putnam, Ian~F.},
       title={Functoriality of the {$C^*$}-algebras associated with hyperbolic
  dynamical systems},
        date={2000},
        ISSN={0024-6107},
     journal={J. London Math. Soc.},
      volume={62},
      number={3},
       pages={873\ndash 884},
         url={https://doi.org/10.1112/S002461070000140X},
         doi={10.1112/S002461070000140X},
      review={\MR{1794291}},
}

\bib{put:HoSmale}{article}{
      author={Putnam, Ian~F.},
       title={A homology theory for {S}male spaces},
        date={2014},
        ISSN={0065-9266},
     journal={Mem. Amer. Math. Soc.},
      volume={232},
      number={1094},
       pages={viii+122},
         url={https://doi.org/10.1090/memo/1094},
      review={\MR{3243636}},
}

\bib{put:spiel}{article}{
      author={Putnam, Ian~F.},
      author={Spielberg, Jack},
       title={The structure of {$C^*$}-algebras associated with hyperbolic
  dynamical systems},
        date={1999},
        ISSN={0022-1236},
     journal={Journal of Functional Analysis},
      volume={163},
      number={2},
       pages={279 \ndash  299},
  url={http://www.sciencedirect.com/science/article/pii/S0022123698933791},
         doi={https://doi.org/10.1006/jfan.1998.3379},
}

\bib{MR584266}{book}{
      author={Renault, Jean},
       title={A groupoid approach to {$C^{\ast} $}-algebras},
      series={Lecture Notes in Mathematics},
   publisher={Springer, Berlin},
        date={1980},
      volume={793},
        ISBN={3-540-09977-8},
      review={\MR{584266}},
}

\bib{ruelle:thermo}{book}{
      author={Ruelle, David},
       title={Thermodynamic formalism},
     edition={Second},
      series={Cambridge Mathematical Library},
   publisher={Cambridge University Press, Cambridge},
        date={2004},
        ISBN={0-521-54649-4},
         url={https://doi.org/10.1017/CBO9780511617546},
         doi={10.1017/CBO9780511617546},
        note={The mathematical structures of equilibrium statistical
  mechanics},
      review={\MR{2129258}},
}

\bib{scarparo:hk}{article}{
      author={Scarparo, Eduardo},
       title={Homology of odometers},
        date={2020},
        ISSN={0143-3857},
     journal={Ergodic Theory Dynam. Systems},
      volume={40},
      number={9},
       pages={2541\ndash 2551},
         url={https://doi.org/10.1017/etds.2019.13},
         doi={10.1017/etds.2019.13},
      review={\MR{4130816}},
}

\bib{MR1345152}{book}{
	     label         = {Schm95},
	  author={Schmidt, Klaus},
	   title={Dynamical systems of algebraic origin},
	  series={Progress in Mathematics},
   publisher={Birkh\"{a}user Verlag, Basel},
		date={1995},
	  volume={128},
		ISBN={3-7643-5174-8},
		 url={https://mathscinet.ams.org/mathscinet-getitem?mr=1345152},
	  review={\MR{1345152}},
}

\bib{MR650021}{article}{
      label         = {Scho82},
      author={Schochet, Claude},
       title={Topological methods for {$C^{\ast} $}-algebras. {II}. {G}eometric
  resolutions and the {K}\"unneth formula},
        date={1982},
        ISSN={0030-8730},
     journal={Pacific J. Math.},
      volume={98},
      number={2},
       pages={443\ndash 458},
         url={http://projecteuclid.org/euclid.pjm/1102734267},
      review={\MR{650021}},
}

\bib{takai:ano}{incollection}{
      author={Takai, H.},
       title={{$KK$}-theory for the {$C^\ast$}-algebras of {A}nosov
  foliations},
        date={1986},
   booktitle={Geometric methods in operator algebras ({K}yoto, 1983)},
      series={Pitman Res. Notes Math. Ser.},
      volume={123},
   publisher={Longman Sci. Tech., Harlow},
       pages={387\ndash 399},
      review={\MR{866509}},
}

\bib{MR30491}{article}{
      author={Taussky, Olga},
       title={On a theorem of {L}atimer and {M}ac{D}uffee},
        date={1949},
        ISSN={0008-414X},
     journal={Canad. J. Math.},
      volume={1},
       pages={300\ndash 302},
         url={https://doi.org/10.4153/cjm-1949-026-1},
         doi={10.4153/cjm-1949-026-1},
      review={\MR{30491}},
}

\bib{tu:moy}{article}{
      author={Tu, Jean-Louis},
       title={La conjecture de {B}aum-{C}onnes pour les feuilletages
  moyennables},
        date={1999},
        ISSN={0920-3036},
     journal={$K$-Theory},
      volume={17},
      number={3},
       pages={215\ndash 264},
         url={http://dx.doi.org/10.1023/A:1007744304422},
         doi={10.1023/A:1007744304422},
      review={\MR{1703305 (2000g:19004)}},
}

\bib{MR735415}{article}{
      author={Wallace, D.~I.},
       title={Conjugacy classes of hyperbolic matrices in {${\rm Sl}(n,\,{\bf
  Z})$} and ideal classes in an order},
        date={1984},
        ISSN={0002-9947},
     journal={Trans. Amer. Math. Soc.},
      volume={283},
      number={1},
       pages={177\ndash 184},
         url={https://doi.org/10.2307/1999996},
         doi={10.2307/1999996},
      review={\MR{735415}},
}

\bib{weil:bnt}{book}{
 Author = {Weil, Andr{\'e}},
 Title = {Basic number theory},
 Series = {Grundlehren Math. Wiss.},
 ISSN = {0072-7830},
 Volume = {144},
 Year = {1974},
 Publisher = {Springer, Cham},
 Language = {English},
}

\end{biblist}
\end{bibdiv}
\end{document}